\newcommand{\nc}{\newcommand}
\newcommand{\delete}[1]{}	
\nc{\mlabel}[1]{\label{#1}}  
\nc{\mcite}[1]{\cite{#1}}  
\nc{\mref}[1]{\ref{#1}}  
\nc{\meqref}[1]{\eqref{#1}} %
\nc{\mbibitem}[1]{\bibitem{#1}} 
\nc{\mlabel}[1]{\label{#1}  
{\hfill \hspace{1cm}{\small\tt{{\ }\hfill(#1)}}}}
\nc{\mcite}[1]{\cite{#1}{\small{\tt{{\ }(#1)}}}}  
\nc{\mref}[1]{\ref{#1}{{\tt{{\ }(#1)}}}}  
\nc{\meqref}[1]{\eqref{#1}{{\tt{{\ }(#1)}}}}  
\nc{\mbibitem}[1]{\bibitem[\bf #1]{#1}} 
\newtheorem{theorem}{Theorem}[section]
\newtheorem{prop}[theorem]{Proposition}
\newtheorem{lemma}[theorem]{Lemma}
\newtheorem{coro}[theorem]{Corollary}
\theoremstyle{definition}
\newtheorem{defn}[theorem]{Definition}
\newtheorem{prop-def}{Proposition-Definition}[section]
\newtheorem{notation}[theorem]{Notation}
\newtheorem{tempex}[theorem]{Example}
\newtheorem{tempexs}[theorem]{Examples}
\newtheorem{temprmk}[theorem]{Remark}
\newtheorem{tempexer}{Exercise}[section]
\newenvironment{exam}{\begin{tempex}\rm}{\end{tempex}}
\nc{\vsa}{\vspace{-.1cm}} \nc{\vsb}{\vspace{-.2cm}}
\nc{\vsc}{\vspace{-.3cm}} \nc{\vsd}{\vspace{-.4cm}}
\nc{\vse}{\vspace{-.5cm}}
\nc{\tred}[1]{\textcolor{red}{#1}} \nc{\tgreen}[1]{\textcolor{green}{#1}}
\nc{\tblue}[1]{\textcolor{blue}{#1}} \nc{\tpurple}[1]{\textcolor{purple}{#1}}
\nc{\li}[1]{\textcolor{red}{#1}}
\nc{\lir}[1]{\textcolor{red}{Li:#1}}
\nc{\xing}[1]{\tblue{Xing:#1 }}
\nc{\hu}[1]{\tpurple{Huhu:#1 }}
\nc{\Hu}[1]{\tpurple{#1 }}
\nc{\ubqco}{unary binary \qc ns operad\xspace}
\nc{\ubqcos}{unary binary \qc ns operads\xspace}
\nc{\Ubqco}{Unary binary \qc ns operad\xspace}
\nc{\Ubqcos}{Unary binary \qc ns operads\xspace}
\nc{\multa}{\bullet_1}\nc{\multb}{\bullet_2}
\nc{\multi}{\bullet_i}\nc{\multj}{\bullet_j}
\nc{\opera}{P_1}\nc{\operb}{P_2}
\nc{\operi}{P_i}\nc{\operj}{P_j}
\nc{\name}[1]{{\bf #1}}
\nc{\bfc}{\mathbf{c}}
\nc\sopr[1]{{#1}}
\nc{\mscr}[1]{\mathscr{#1}} \nc{\cal}[1]{\mathcal{#1}} \nc{\bb}[1]{\mathbb{#1}}
\nc{\id}{\rm{id}} \nc{\bfk}{{\bf k}}
\nc{\rba}{\mathscr{RBA}}\nc{\spp}{\mathscr{P}} \nc{\stt}{\mscr{T}}
\nc{\homop}{\mathscr{H}\!om\mathscr{A}} \nc{\cubas}{\mathscr{CA}s} \nc{\spb}{\mscr{P}_\circ}  \nc{\sph}{\mscr{P}_\bullet}
\nc{\dera}{\mathscr{D}er\mathscr{A}}
\nc{\add}{\uplus}\nc{\badd}{\biguplus}
\nc{\lp}{\lambda_p}\nc{\la}{\lambda_a}
\nc{\prl}[2]{(x#1y)#2z}\nc{\prr}[2]{x#1(y#2z)}
\nc{\ra}[3]{#1(x)#2#3(y)}\nc{\rb}[3]{#3(#1(x)#2y)+#1(x#2#3(y))}\nc{\rbwe}[3]{#3(#1(x)#2y)+#1(x#2#3(y))+\lambda_{#3}#1(x#2y)}
\nc{\dereq}[6]
{\treey{\cdlr[0.8]{o} \cdl{ol}\cdr{or}#1{o/b}
\node at (0.2,-0.2) {$d$};\cdlr{o}\node  at (0,0) {$#2$};}
-\treey{\cdlr[0.8]{o} \cdl{ol}\cdr{or}
\node at (ol) {$#4$};\node at (-0.2,0.5) {$d$};\cdlr{o}\node  at (0,0) {$#3$};}
-\treey{\cdlr[0.8]{o} \cdl{ol}\cdr{or}
\node at (or) {$#6$};\node at (0.2,0.5) {$d$};\cdlr{o}\node  at (0,0) {$#5$};}}
\nc{\kb}[3]{\beta_{#1,#2}^{#3,n}}
\nc{\kc}[4]{\beta_{#1,#2,#3}^{#4,n}}
\nc{\ka}[3]{\gamma_{#1,#2}^{#3,n}}
\nc{\kab}[4]{\gamma_{#1,#2,#3}^{#4,n}}
\nc{\kd}[4]{\kappa_{#1,#2, #3}^{#4,n}}
\nc{\otimesh}{\mathop{\otimes}_{\text{H}}}%
\nc{\kong}{\noindent}
\nc{\rblineq}[5]{\Big(
\treey{\cdlr[0.8]{o}\cdl{ol}\cdr{or}
\node at (ol) {$#1$};\node at (or) {$#3$};\node at (-0.2,0.5) {$#2$};\node at (0.2,0.5) {$#4$};\node at (0,0) {$#5$};}
-\treey{\cdlr[0.8]{o} \cdl{ol}\cdr{or}\node at (0,0) {$#5$};\node at (0,-0.2) {$#3$};
\node at (ol) {$#1$};\node at (-0.2,0.5) {${#2}$};\node at (0.2,-0.2) {$#4$};}
-\treey{\cdlr[0.8]{o} \cdl{ol}\cdr{or}\node at (0,0) {$#5$};\node at (0,-0.2) {$#1$};
\node at (or) {$#3$};\node at (0.2,-0.2) {$#2$};\node at (0.2,0.5) {$#4$};}\Big)}
\nc{\rblineqq}[5]{
\treey{\cdlr[0.8]{o}\cdl{ol}\cdr{or}
\node at (ol) {$#1$};\node at (or) {$#3$};\node at (-0.2,0.5) {$#2$};\node at (0.2,0.5) {$#4$};\node at (0,0) {$#5$};}
-\treey{\cdlr[0.8]{o} \cdl{ol}\cdr{or}\node at (0,0) {$#5$};\node at (0,-0.2) {$#3$};
\node at (ol) {$#1$};\node at (-0.2,0.5) {${#2}$};\node at (0.2,-0.2) {$#4$};}
-\treey{\cdlr[0.8]{o} \cdl{ol}\cdr{or}\node at (0,0) {$#5$};\node at (0,-0.2) {$#1$};
\node at (or) {$#3$};\node at (0.2,-0.2) {$#2$};\node at (0.2,0.5) {$#4$};}}
\newlength\xch\newlength\dbj
\newif\ifqdd
\nc\cddf[3]{%
\coordinate (#2) at ($(#1)+(#3)$);
\draw (#1)--(#2);
\ifqdd\fill (#1) circle (\dbj);\fi}
\nc\cdx[4][1]{\cddf{#2}{#3}{#4:#1*\xch}}
\nc\cdu[2][1]{\cdx[#1]{#2}{#2u}{90}}
\nc\cdl[2][1]{\cdx[#1]{#2}{#2l}{135}}
\nc\cdr[2][1]{\cdx[#1]{#2}{#2r}{45}}
\nc\cdlr[2][1]{%
\foreach \i in {#2} {\cdl[#1]{\i}\cdr[#1]{\i}}}
\nc\cduu[2][1]{%
\foreach \i in {#2} {\cdu[#1]{\i}\cdu[#1]{\i}}}
\nc\cda[2][1]{\cdx[#1]{#2}{#2a}{90}}
\nc\cdb[2][1]{\cdx[#1]{#2}{#2b}{-90}}
\nc\cdbl[2][1]{\cdx[#1]{#2}{#2bl}{-135}}
\nc\cdbr[2][1]{\cdx[#1]{#2}{#2br}{-45}}
\nc\cdblr[2][1]{%
\foreach \i in {#2} {\cdbl[#1]{\i}\cdbr[#1]{\i}}}
\nc\treeo[2][]{\tikz[baseline=-0.58ex,line width=0.15ex,
every node/.style={font=\scriptsize,inner sep=1pt},#1]{%
\coordinate (o) at (0,0);#2}}%
\nc\treeyy[2][scale=0.8]{\treeo[#1]{\cdb o\cda o#2}}%
\nc\treey[2][scale=0.8]{\treeo[#1]{\cdb o\cdlr o#2}}%
\nc\treedy[2][scale=0.8]{\treeo[#1]{\cda o\cdblr o#2}}%
\nc\zhongdian[2]{ \node at($(#1)!0.5!(#2)$) {$\bullet$};}%
\nc\zhd[1]{\foreach \i/\j in {#1} {\zhongdian{\i}{\i\j}}}
\nc\zhongddian[2]{ \node at($(#1)!0.5!(#2)$) {$\circ$};}%
\nc\zhdd[1]{\foreach \i/\j in {#1} {\zhongddian{\i}{\i\j}}}
\nc\zhongsdian[2]{ \node at($(#1)!0.5!(#2)$) [scale=0.6]{$\bullet$};}%
\nc\zhds[1]{\foreach \i/\j in {#1} {\zhongsdian{\i}{\i\j}}}
\nc{\rc}{R_{\bfc,\,}}
\nc{\cubicr}[4]{\kc{k}{\ell}{i}{1}\treey{\cdlr[0.8]{o}\cdl{ol}\cdr{or}
\node at (ol) {$#1$};\node at (or) {$#2$};\node at (-0.2,0.5) {$P_k$};\node at (0.2,0.5) {$P_\ell$};\node at (0,0.2) {$i$};\node at (0,0) {$#3$};}
+\kc{k}{\ell}{i}{2}\treey{\cdlr[0.8]{o} \cdl{ol}\cdr{or}#4{o/b}
\node at (ol) {$#2$};\node at (-0.2,0.5) {$P_\ell$};\node at (0.2,-0.2) {$P_k$};\node at (0,0.2) {$i$};\node at (0,0) {$#3$};}
+\kc{k}{\ell}{i}{3}\treey{\cdlr[0.8]{o} \cdl{ol}\cdr{or}#4{o/b}
\node at (or) {$#2$};\node at (0.2,-0.2) {$P_k$};\node at (0.2,0.5) {$P_\ell$};\node at (0,0.2) {$i$};\node at (0,0) {$#3$};}
+\kc{k}{\ell}{i}{4}\treey{\cdlr[0.8]{o} \cdl{ol}\cdr{or}\node at (0,-0.35) {$#2$};
\node at (0,-0.15) {$#1$};\node at (0.2,-0.1) {$P_k$};\node at (0.2,-0.4) {$P_\ell$};\node at (0,0.2) {$i$};\node at (0,0) {$#3$};}}
\nc{\cubicrr}[4]{\kc{k}{\ell}{i}{1}\treey{\cdlr[0.8]{o}\cdl{ol}\cdr{or}
\node at (ol) {$#1$};\node at (or) {$#2$};\node at (-0.2,0.5) {$P_k$};\node at (0.2,0.5) {$P_\ell$};\node at (0,0.2) {$i$};\node at (0,0) {$#3$};}
+\kc{k}{\ell}{i}{2}\treey{\cdlr[0.8]{o} \cdl{ol}\cdr{or}#4{o/b}
\node at (ol) {$#2$};\node at (-0.2,0.5) {$P_\ell$};\node at (0.2,-0.2) {$P_k$};\node at (0,0.2) {$i$};\node at (0,0) {$#3$};}\\
+&\kc{k}{\ell}{i}{3}\treey{\cdlr[0.8]{o} \cdl{ol}\cdr{or}#4{o/b}
\node at (or) {$#2$};\node at (0.2,-0.2) {$P_k$};\node at (0.2,0.5) {$P_\ell$};\node at (0,0.2) {$i$};\node at (0,0) {$#3$};}
+\kc{k}{\ell}{i}{4}\treey{\cdlr[0.8]{o} \cdl{ol}\cdr{or}\node at (0,-0.35) {$#2$};
\node at (0,-0.15) {$#1$};\node at (0.2,-0.1) {$P_k$};\node at (0.2,-0.4) {$P_\ell$};\node at (0,0.2) {$i$};\node at (0,0) {$#3$};}}
\nc{\qc}{quadratic/cubic }
\nc{\lin}[1]{{#1}^{\mathrm{LC}}} 	
\nc{\Lin}[1]{{#1}_{\mathrm{LC}}}
\nc{\mat}[1]{{#1}^{\mathrm{MT}}} \nc{\Mat}[1]{{#1}_{\mathrm{MT}}}
\nc{\tot}[1]{{#1}^{\mathrm{TC}}} \nc{\Tot}[1]{{#1}_{\mathrm{TC}}}
\nc{\lrr}{R_{L}}\nc{\mrr}{R_{M}}\nc{\trr}{R_{T}}
\nc{\lrrs}{R'_{L}}\nc{\mrrs}{R'_{M}}\nc{\trrs}{R'_{T}}
\nc{\lr}[1]{#1_{L}} \nc{\mr}[1]{#1_{M}} \nc{\tr}[1]{#1_{T}}
\nc{\tra}[1]{#1_{T,1,2}}\nc{\traa}[1]{#1_{T,1,3}} \nc{\trb}[1]{#1_{T,2,3}} \nc{\trbb}[1]{#1_{T,2,2}} \nc{\trc}[1]{#1_{T,3,2}}\nc{\trcc}[1]{#1_{T,3,3}}\nc{\trd}[1]{#1_{T,4}}\nc{\tri}[1]{#1_{T,i,2}}\nc{\trii}[1]{#1_{T,i,3}}
\nc{\tru}[1]{#1_{T'}}
\nc{\barr}[1]{\bar{#1}}
\nc\dual{\ast}\nc\dkc[3]{\beta'^{b,#3}_{#1,#2}}
\nc{\linop}{\mscr{T}\Big(\bigoplus\limits_{\omega\in\Omega} E_\omega\Big)\Big/\Big\langle \bigcup\limits_{\omega\in\Omega}R_\omega\cup \lrr\Big\rangle}
\nc\des{{\rm Des_{\leq_{L}}}}\nc\ver{{\rm Vert}}\nc\dess{{\rm Des_{\leq}}}
\nc\Omegac{{\Omega^{\rm c}}}\nc{\Rbb}{R_{2,2}}\nc{\Rbc}{R_{2,3}}\nc{\Rbbw}{R_{\omega,2,2}}\nc{\Rbcw}{R_{\omega,2,3}}
\nc\pair[1]{{\langle#1\rangle}} \nc\as{\mscr{A}\hspace{-0.1cm}s} \nc\ass{\mscr{A}\hspace{-0.1cm}ss}
\nc\na{\mscr{N}\hspace{-0.1cm}\mscr{A}}\nc\dend{\mscr{D}\hspace{-0.05cm}end}\nc\dias{\mscr{D}\hspace{-0.05cm}ias}
\nc\mnodis{\mu,\nu,\omega\in\Omega \atop \text{distinct}}
\begin{document}

\title[Compatible structures on unary binary quadratic/cubic ns operads]{Compatible structures on unary binary nonsymmetric operads with quadratic and cubic relations}

\author{Xing Gao}
\address{School of Mathematics and Statistics,
Lanzhou University, Lanzhou, Gansu 730000, China}
\email{gaoxing@lzu.edu.cn}

\author{Li Guo}
\address{
Department of Mathematics and Computer Science,
Rutgers University,
Newark, NJ 07102, USA}
\email{liguo@rutgers.edu}

\author{Huhu Zhang}
\address{School of Mathematics and Statistics,
Lanzhou University, Lanzhou, Gansu 730000, China}
\email{zhanghh17@lzu.edu.cn}

\date{\today}

\begin{abstract}
Various compatibility conditions among replicated copies of operations in a given algebraic structure have appeared in many contexts in recent years. Taking an uniform approach, this paper gives an operadic study of compatibility conditions for nonsymmetric operads with unary and binary operations, and homogeneous quadratic and cubic relations. This generalizes the previous studies for binary quadratic operads. We consider three compatibility conditions, namely the linear compatibility, matching compatibility and total compatibility, with increasingly strict restraints among the replicated copies. The linear compatibility is in Koszul dual to the total compatibility, while the matching compatibility is self dual. Further, each compatibility can be expressed in terms of either one or both of the two Manin square products.
\end{abstract}

\subjclass[2010]{
    18D50,  
	05C05,   
	05A05,   
	16W99 
}

\keywords{operad; linear compatibility; matching compatibility; total compatibility; Manin product; Koszul duality; differential algebra; Rota-Baxter algebra}

\maketitle

\vspace{-1cm}

\tableofcontents

\vspace{-1cm}

\setcounter{section}{0}

\allowdisplaybreaks

\section{Introduction}
\mlabel{sec:intro}
This paper studies nonsymmetric operads encoding algebraic structures with replicated copies of operations satisfying various compatibility conditions among these copies.
The relations of the compatibility conditions with Koszul duality and Manin products are established.  

\newpage 
\subsection{Algebraic structures with replicated operations}
\subsubsection{Linear compatibility of operations}\vspace{-.3cm}
An important property of derivations and its multi-dimensional generalizations such as tangent vectors is their closure under taking linear combinations, allowing them to form a vector space and further a Lie algebra. Such properties cannot be expected for other operators or operations. Thus it is natural to investigate the conditions under which linear combinations of multiple copies of a given operation still have the same properties of this operation. Such a condition is called a {\bf linear compatibility condition}.

The notion of linear compatibility first appeared for the Lie bracket, arising from the pioneering work~\mcite{Mag} of Magri in the study of the integrable Hamiltonian equation.
There a bi-Hamiltonian system could be defined by a Poisson algebra with two linearly compatible Poisson (Lie) brackets.
Such a structure was abstracted to the notion of a bihamiltonian algebra and studied in the context of operads and Koszul duality~\mcite{DK}, and further applied to quadratic algebras in the sense of Manin and a conjecture of Feigin~\mcite{BDK}.
The Koszul property of the linearly compatible Lie operad was verified in~\mcite{DK2} applying posets of weighted partitions. This approach was further extended in~\mcite{DL,GW} to the study of free linearly compatible Lie algebras with multiple brackets.

Linear compatibility is naturally related to linear or infinitesimal deformations, studying when a linear turbulence $\mu+a \nu$ of an operation $\mu$ is still the same kind of operation. In this direction, deformations of bihamiltonian algebras were developed in~\mcite{LZ} and continued in~\mcite{CPS} where a vanishing conjecture raised in~\mcite{LZ} was proved applying spectral sequences.
\vspace{-.3cm}
\subsubsection{Compatibilities of binary quadratic operads and nonsymmetric operads}

Linear compatibility of the Lie operad was generalized to binary quadratic operads by~Strohmayer~\mcite{St} where it was also shown that linear compatibility is in Koszul dual to another naturally defined total compatibility,
in the sense that the linear compatibility of an operad has its Koszul dual as the total compatibility of the dual operad.

In the remarkable work of Bruned, Hairer and Zambotti~\mcite{BHZ} on algebraic
renormalization of regularity structures, another compatible condition for pre-Lie algebra with multiple operations emerged~\mcite{Foi}, called multiple pre-Lie algebras. As it turns out, its associative analogue with two multiplications was introduced in~\mcite{Zi} as $As^{(2)}$ and further studied in~\mcite{ZBG} under the name of matching associative dialgebras. In~\mcite{ZGG20}, such matching conditions were systematically studied for Rota-Baxter algebras, dendriform algebras and pre-Lie algebras.

Linearly compatible associative algebras were studied in~\mcite{OS1} for matrix algebras and especially for linear deformations. It was further showed in~\mcite{OS2} that a pair of linear compatible associative products gives rise to a hierarchy of integrable systems of ODEs via the Lenard-Magri scheme~\mcite{Mag}.
Double constructions of linearly compatible associative algebras have been studied further in the direction of compatible associative bialgebras, associative Yang-Baxter equations and Goncharov's path Hopf algebras~\mcite{Go,Mar,Wu}.

In~\mcite{CGM}, quantum bi-Hamiltonian systems were built on linearly compatible associative algebras.
In~\mcite{Dot}, linear compatible associative algebras were studied as $S_n$-modules and free objects were constructed in terms of rooted trees and grafting, and further related to the Hopf algebras of Connes-Kreimer, Grossman-Larson and Loday-Ronco.
Homotopy linear compatible algebras were introduced and the homotopy transfer theorem were proved in~\mcite{Zh}.

Totally compatible associative algebras and Lie algebra with two multiplications were further studied in~\mcite{ZBG2} in connection with tridendriform algebras and PostLie algebras.
\vspace{-.3cm}
\subsubsection{Compatibilities of Rota-Baxter operators}

More recently, compatible unary operations, that is, linear operators, have also been studied~\mcite{ZGG20} under the name of matching Rota-Baxter operators. Such studies were motivated on the one hand by imposing to Rota-Baxter operators the linear closeness of the derivations noted at the very beginning and, on the other hand, by the matching pre-Lie algebras arising from the work~\mcite{BHZ} on algebraic renormalization of regularity structures. Furthermore, such structure underlies the algebraic study of Voterra integral equations~\mcite{GGZy,GGL}.

From the viewpoint of deformation theory, while deformation theory for algebraic structures with binary operations is quite general and complete, as deformation of binary (quadratic) operads~\mcite{LV}, its study when the algebraic structure has nontrivial unary operations is experiencing rapid developments quite recently. See~\mcite{Da,LST,TBGS} and the references therein.
\vspace{-.1cm}
\subsection{Compatibility conditions of unary binary operads}
Motivated by these recent developments of algebraic structures with compatible unary and binary operations that satisfy quadratic or cubic relations, we give a systematic study of operads with compatible operations, generalizing the existing treatments of binary quadratic operads in several directions. Thus our approach
\begin{enumerate}
\item includes unary as well as binary operations,
\item examines cubic as well as quadratic relations, and
\item relates several compatibility conditions by Koszul duality and Manin products.
\end{enumerate}

In order to give a systematic treatment, we will focus on nonsymmetric (ns) operads in this paper and leave a detailed discussion of the other cases to a later study.

The main part of the paper is organized in three sections for three compatibility conditions.

In Section~\mref{sec:lincom}, we set up the stage of our study, on an \ubqco $\spp$, and provide many examples.
We then introduce the general structure of linearly compatible algebras over such an operad, for a parameter set $\Omega$. We then define the linearly compatible operad $\lin{\spp}_\Omega$ of $\spp$ that encodes the linearly compatible algebras ove $\spp$ (Theorem~\mref{thm:comp}). It is shown that $\lin{\spp}_\Omega$ is the Manin black square product of $\spp$ with the linearly compatible operad $\lin{\as}_\Omega$ of the associative algebra (Proposition~\mref{prop:maninbl}).

In Section~\mref{sec:matcom}, the matching compatible operad $\mat{\spp}_\Omega$ of $\spp$ is introduced and its self duality for the Koszul dual is proved (Theorem~\mref{thm:mdul}). Examples of self-dual operads with nontrivial unary operations are provided. Further, $\mat{\spp}_\Omega$ can be obtained from $\spp$ by taking its Manin black square product and white square product with $\mat{\as}_\Omega$, the matching associative operad (Proposition~\mref{prop:maninbll}).

The notion of totally compatible operad $\tot{\spp}_\Omega$ is introduced in Section~\mref{sec:totcom} and its duality with $\lin{(\spp^!)}_\Omega$ is proved (Theorem~\mref{thm:dul}). It is further shown that $\tot{\spp}_\Omega$ can be obtained from the Manin white square product of $\spp$ with the totally compatible associative operad $\tot{\ass}_\Omega$ (Corollary~\mref{cor:totalwhite}).

See~\mcite{FMZ} for an operadic study of a related structure, called family algebraic structures.

\smallskip

{\bf Notation.}
Throughout this paper, we will work over a fixed field $\bfk$ of characteristic zero. It is the base field for all vector spaces, algebras,  tensor products, as well as linear maps.
\vspace{-.1cm}
\section{Linear compatibility and the Manin black square product}
\mlabel{sec:lincom}
In this section, we introduce the notion of \ubqcos and then study for such operads the first of our compatibility conditions, that is, the linear compatibility condition.
\vspace{-.1cm}
\subsection{\Ubqcos}
\mlabel{ss:ubqco}
We give the notion and examples of \ubqcos and refer the reader to~\mcite{BD,LV,MSS} for further details on operads.
\begin{defn}
A \textbf{nonsymmetric (ns) operad} is an arity graded vector space
$\mathscr{P}= \{\mathscr{P}_0, \mathscr{P}_1, \cdots\}$ equipped with an element $\id\in\mscr{P}_1$ and
{\bf composition maps}:
$$
\gamma:=\gamma_{n_1,\ldots,n_k}^k:\mscr{P}_{k}\otimes\mscr{P}_{n_1}\otimes\cdots\otimes\mscr{P}_{n_k}
\longrightarrow\mscr{P}_{n_1+\cdots+n_k}, \quad (\mu;\nu_1,\ldots,\nu_k) \mapsto \gamma(\mu;\nu_1,\ldots,\nu_k)
$$
which satisfy the following properties:
\begin{enumerate}
  \item (associativity)
\begin{multline*}
 \gamma(\mu;\gamma(\nu_1;\omega_{1,1},\ldots,\omega_{1,\ell_1}),\ldots,\gamma(\nu_k;\omega_{k,1},\ldots,\omega_{k,\ell_k}))\\
=\gamma(\gamma(\mu;\nu_1,\ldots,\nu_k);\omega_{1,1},\ldots,\omega_{1,\ell_1},\omega_{2,1},\ldots,\omega_{k,1},\ldots,\omega_{k,\ell_k}).\mlabel{it:asso}
\end{multline*}
\item (unitality)
$\gamma(\id;\mu)=\mu,\quad \gamma(\mu;\id,\ldots,\id)=\mu.$\mlabel{it:unit}
\end{enumerate}
\mlabel{defn:nsop}
\end{defn}
An ns operad $\mathscr{P}$ is called {\bf reduced} if $\mathscr{P}_0 = 0$.
All operads are assumed to be reduced in this paper. A morphism of ns operads is a morphism of the arity graded vector spaces that is compatible with the compositions.

Recall that, for any vector space $V$, the pair $\text{End}_V:= \big( \{{\rm Hom}(V^{\otimes n},V)\}_{n\geq0},\gamma\big)$ is an ns operad,
with $\gamma(f;g_1,\ldots,g_k)$ the usual composition of multivariate functions.
\begin{defn}
	Let $\mscr{P}$ be an ns operad.
\begin{enumerate}
  \item A \textbf{$\mscr{P}$-algebra} is a vector space $V$
	with a morphism of ns operads $\mscr{P} \overset{\rho}{\to} \text{End}_V$. We say that the operad $\mscr{P}$ {\bf encodes} the $\mscr{P}$-algebras.
  \item Let $(V, \rho_V)$ and $(W, \rho_W)$ be $\mscr{P}$-algebras. A {\bf morphism of $\mscr{P}$-algebras} is a linear map $f : V\to W$
  such that
  $$f(\rho_V(\mu)(v_1, \ldots,v_n))=\rho_W(\mu)(f(v_1), \ldots,f(v_n)),
  \text{ for all } n\geq0, \mu\in\spp_n, v_i\in V.$$
\end{enumerate}
\mlabel{def:palg}
\end{defn}
Let $E=\{E_0,E_1,\ldots\}$ be an arity graded vector space.
The free ns operad $\mscr{T}(E)$ on $E$ can be constructed as follows.
Let $\mscr{T}(E)_n$ be the vector space spanned by all decorated planar rooted trees with $n$ leaves whose each internal (non-leave) vertex $v$ is decorated by an element of $E_{|\text{in}(v)|}$, where $\text{in}(v)$ is the set of inputs of the vertex $v$ in
the planar rooted tree.
Consider the arity graded vector space
$$\mscr{T}(E):=\{\mscr{T}(E)_0,\mscr{T}(E)_1,\ldots\}.$$
Define a composition product $\gamma(t;t_1,\ldots,t_n)$ by grafting the root of $t_i\in \mscr{T}(E)_{n_i}$ to the $i$-th leaf of $t\in \mscr{T}(E)_k$ for $1\leq i\leq k$. This composition product $\gamma$ makes $\mscr{T}(E)$ into an ns operad.
Let $i:E\rightarrow \mscr{T}(E)$ denote the embedding map identifying the operation $\mu\in E(n)$ with the $n$-th corolla decorated by $\mu$.
\begin{lemma}\mcite{LV}
Let $E=\{E_0,E_1,\ldots\}$ be an arity graded vector space.
Then $(\mscr{T}(E),\gamma)$ together with the natural embedding $i: E\to \mscr{T}(E)$
is the free ns operad on $E$.
\end{lemma}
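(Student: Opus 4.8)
The plan is to verify the universal property that characterizes the free ns operad: given any ns operad $\mscr{P}$ and any morphism $\varphi\colon E\to\mscr{P}$ of arity graded vector spaces, there is a unique morphism of ns operads $\bar\varphi\colon\mscr{T}(E)\to\mscr{P}$ with $\bar\varphi\circ i=\varphi$. Since $\mscr{T}(E)$ carries the embedding $i$ by construction, producing such a $\bar\varphi$ and proving its uniqueness is exactly what is needed.

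First I would record that $(\mscr{T}(E),\gamma)$ really is an ns operad in the sense of Definition~\mref{defn:nsop}: the unit $\id$ is the planar rooted tree with a single leaf and no internal vertex, and the associativity and unitality axioms hold because grafting subtrees onto leaves of planar rooted trees is an associative operation for which the one-leaf tree is neutral. This is the combinatorial backbone and is routine, so I would state it briefly. The key structural observation to isolate here is that every decorated planar rooted tree $t$ with at least one internal vertex decomposes \emph{uniquely} as $t=\gamma(c_\mu;t_1,\dots,t_k)$, where $c_\mu$ is the corolla at the root decorated by a well-defined element $\mu\in E_k$ and $t_1,\dots,t_k$ are the decorated subtrees hanging from its $k$ inputs, each having strictly fewer internal vertices than $t$.

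Next I would construct $\bar\varphi$ by induction on the number of internal vertices. Set $\bar\varphi(\id):=\id\in\mscr{P}_1$; and for a tree $t=\gamma(c_\mu;t_1,\dots,t_k)$ as above, set $\bar\varphi(t):=\gamma(\varphi(\mu);\bar\varphi(t_1),\dots,\bar\varphi(t_k))$, then extend linearly to each $\mscr{T}(E)_n$. Well-definedness rests on the uniqueness of the decomposition noted above. By construction $\bar\varphi\circ i=\varphi$ and $\bar\varphi$ preserves the unit. It then remains to check that $\bar\varphi$ is compatible with the composition maps, i.e. $\bar\varphi(\gamma(s;s_1,\dots,s_m))=\gamma(\bar\varphi(s);\bar\varphi(s_1),\dots,\bar\varphi(s_m))$ for all decorated trees $s,s_1,\dots,s_m$. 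This I would prove by induction on the number of internal vertices of $s$: the base case $s=\id$ is the unitality axiom in $\mscr{P}$, and for $s=\gamma(c_\mu;u_1,\dots,u_k)$ one unwinds $\gamma(s;s_1,\dots,s_m)$ using associativity of $\gamma$ in $\mscr{T}(E)$, applies the definition of $\bar\varphi$ together with the inductive hypothesis on the $u_j$, and then reassembles using the associativity axiom of $\mscr{P}$ to regroup the iterated composition. Uniqueness is immediate: any ns operad morphism $\psi$ with $\psi\circ i=\varphi$ must send $\id\mapsto\id$ and must satisfy $\psi(\gamma(c_\mu;t_1,\dots,t_k))=\gamma(\varphi(\mu);\psi(t_1),\dots,\psi(t_k))$ by compatibility with composition, so $\psi=\bar\varphi$ follows by the same induction.

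The main obstacle is the compatibility-with-composition step: the bookkeeping of how the flat list of second-level inputs regroups when one passes from the nested composition in $\mscr{T}(E)$ to the nested composition in $\mscr{P}$ under the associativity axiom, where one must be careful that the index partitioning matches on both sides. The remaining points — that $\mscr{T}(E)$ is an operad, and that the root-corolla-plus-subtrees decomposition is unique — are straightforward once stated precisely, and everything else is formal.
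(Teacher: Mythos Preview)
Your proposal is correct and follows the standard approach via the universal property, with the inductive construction of $\bar\varphi$ along the root-corolla decomposition. Note, however, that the paper does not give its own proof of this lemma: it is stated with a citation to \cite{LV} and used as a black box, so there is no in-paper argument to compare against. Your write-up is essentially the proof one finds in that reference.
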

Any ns operad $\mscr{P}$ can be presented as the quotient of a free ns operad modulo an operadic ideal:
$$\mscr{P} =\mscr{P}(E, R): = \mscr{T}(E)/\langle R\rangle,$$
where $E$ and $R$ are called the {\bf generators} and {\bf relations} of $\mscr{P}$, respectively. $\mscr{P}$ is called {\bf finitely generated} if $E_n$ is finite dimensional for all $n\geq 0$.

For any element $t$ of $\mscr{T}(E)$, its {\bf weight} is defined to the number of internal vertices of $t$
as a decorated planar rooted tree.
For example, for the following planar decorated rooted trees in $\mscr{T}(E)$,
  $$\treey{\cdlr{o}\node at (0,0.2) {$\mu$};}, \quad
  \treey{\cdlr[0.8]{o} \cdl{ol}\cdr{or}\zhds{o/b}
\node at (0.2,-0.2) {$d$};\node at (0,0.2) {$\mu$};},\quad
\treey{\cdlr{ol}\node at (0,0.2) {$\mu$};\node at  (ol) [above] {$\nu$};}, \quad
\treey{\cdlr[0.8]{o}\cdl{ol}\cdr{or}
\node at (ol) [scale=0.6]{$\bullet$};\node at (or) [scale=0.6]{$\bullet$};\node at (-0.4,0.1) {$P$};\node at (0.4,0.1) {$P$};\node at (0,0.2) {$\mu$};}\quad
,\treey{\cdlr[0.8]{o} \cdl{ol}\cdr{or}\zhds{o/b}
\node at (ol) [scale=0.6]{$\bullet$};\node at (-0.4,0.1) {$P$};\node at (0.2,-0.2) {$P$};\node at (0,0.2) {$\mu$};} \quad
,\treey{\cdlr[0.8]{o} \cdl{ol}\cdr{or}\zhds{o/b}
\node at (or) [scale=0.6]{$\bullet$};\node at (0.2,-0.2) {$P$};\node at (0.4,0.1) {$P$};\node at (0,0.2) {$\mu$};}
,$$
the first one has weight 1, the second and third ones have weight 2 and the last three ones have weight 3.
Denote by $\mscr{T}(E)^{(n)}$ the subset of elements of $\mscr{T}(E)$ of weight $n$. In particular,
$$\mscr{T}(E)^{(0)}=\bfk \id\,\text{ and }\, \mscr{T}(E)^{(1)}=E.$$
In this paper, we focus on unary binary \qc operads defined as follows.
\begin{defn}
Let $\mscr{P} = \mscr{T}(E)/\langle R\rangle $ be an ns operad.
\begin{enumerate}
\item We call $\mscr{P}$ {\bf unary binary} if
$E=\{0, E_1, E_2, 0,\ldots, 0 ,\ldots\}.$
If $E_1=\bfk\id$, then the operad is called {\bf binary}.
\item A relation in $R$ is said to be {\bf homogeneous} if it is in
$\mscr{T}(E)^{(k)}$ for some $k\geq 0$. In particular, it is called
{\bf quadratic} (resp. {\bf cubic}) if it is in
$\mscr{T}(E)^{(2)}$ (resp. $\mscr{T}(E)^{(3)}$).
We call $\mscr{P}$ {\bf \qc} if each relation in $R$ is either quadratic or cubic.
\end{enumerate}
\mlabel{defn:ubo}
\end{defn}
Note that a \qc operad can have quadratic relations and cubic relations at the same time, but cannot have a quadratic term and a cubic term in the same relation. For example, the relation for a Rota-Baxter algebra (see Example~\mref{ex:nsop}.\mref{ex:rba} and \mref{ex:Nij}) is \qc when the weight is zero, but is not \qc when the weight is nonzero.

For later applications, we give an explicit description of unary binary \qc ns operads.
For distinction, we will use suffix notion for arity of relations. More precisely, an $n$-ary relation will be called a relation in arity $n$.

A unary binary \qc nc operad $\mscr{P}$ can be presented by
\begin{equation}
\mscr{P}=\mscr{P}(E,R)=\mscr{T}(E)/\langle R\rangle ,
\mlabel{eq:ubqcop}
\end{equation}
for which
\begin{enumerate}
	\item
$E=\{\,0, E_1, E_2, 0,\ldots, 0 ,\ldots\,\},$
where
\begin{enumerate}
	\item
 $E_1$ is spanned by
\begin{equation}
	\biggr\{
\treeyy[scale=0.8]{\cdu{o} \node  at (0,0) [scale=0.6]{$\bullet$};\node at (0.3,0) {$P_1$};},
\treeyy[scale=0.8]{\cdu{o} \node  at (0,0) [scale=0.6]{$\bullet$};\node at (0.3,0) {$P_2$};},~...~,
\treeyy[scale=0.8]{\cdu{o} \node  at (0,0) [scale=0.6]{$\bullet$};\node at (0.3,0) {$P_t$};}
\biggr\},
\mlabel{eq:unaryop}
\end{equation}
\item $E_2$ is spanned by
\begin{equation}
	\left\{
\treey{\cdlr{o}\node  at (0,0.25) {$1$};},
\treey{\cdlr{o}\node  at (0,0.25) {$2$};},~... ~,
\treey{\cdlr{o}\node  at (0,0.25) {$s$};}
\right\}.
\mlabel{eq:binaryop}
\end{equation}
\end{enumerate}
\item
$R :=R_1\sqcup R_2 \sqcup R_3\sqcup R_4:= (R_{1,2}\sqcup R_{1,3})\sqcup (\Rbb \sqcup \Rbc )\sqcup (R_{3,2}\sqcup R_{3,3})\sqcup R_4$, where
\begin{enumerate}
	\item $R_{1,2}$ is the set of quadratic relations in arity one:
\begin{equation}
R_{1,2} := ~\biggr\{r_{1,2}^n(P_k,P_\ell):=\sum_{1\leq k,\ell\leq t}\alpha_{k,\ell}^{n}~~
\treeyy{\cdu{o} \node  at (0,-0.2) [scale=0.6]{$\bullet$};\node at (0.3,-0.2) {$P_\ell$};
\node  at (0,0.2) [scale=0.6]{$\bullet$};\node at (0.3,0.2) {$P_k$};}
 \quad\Big|\,{1\leq n\leq n_{1,2}}\biggr\},
\mlabel{eq:pr1}
\end{equation}
\item $R_{1,3}$ is the set of cubic relations in arity one:
\begin{equation}
R_{1,3} := ~\biggr\{r_{1,3}^n(P_k,P_\ell, P_m):=\sum_{1\leq k,\ell, m\leq t}\alpha_{k,\ell, m}^{n}~~
\treeyy{\cdu{o} \node  at (0,-0.3) [scale=0.6]{$\bullet$};\node at (0.3,-0.3) {$P_m$};
\node  at (0,0.3) [scale=0.6]{$\bullet$};\node at (0.3,0.3) {$P_k$};
\node  at (0,0) [scale=0.6]{$\bullet$};\node at (-0.3,0) {$P_\ell$};}
 \quad\Big|\,{1\leq n\leq n_{1,3}}\biggr\},
\mlabel{eq:pr13}
\end{equation}
\item $\Rbb$ is the set of quadratic relations in arity two:
\begin{equation}
\Rbb:= ~\biggr\{r_{2,2}^n(P_k,i):=\sum_{1\leq i\leq s, \atop 1\leq k\leq t}\kb{k}{i}{1} \treey{\cdlr[0.8]{o} \cdl{ol}\cdr{or}\zhds{o/b}
	\node at (0.25,-0.2) {$P_k$};\node at (0,0.3) {$i$};\node at (0,0.3) {$i$};}
+\kb{k}{i}{2}\treey{\cdlr[0.8]{o} \cdl{ol}\cdr{or}
	\node at (ol) [scale=0.6]{$\bullet$};\node at (-0.4,0.1) {$P_k$};\node at (0,0.3) {$i$};}
+\kb{k}{i}{3}\treey{\cdlr[0.8]{o} \cdl{ol}\cdr{or}
	\node at (or) [scale=0.6]{$\bullet$};\node at (0.4,0.1) {$P_k$};\node at (0,0.3) {$i$};}
\quad\Big|\,{1\leq n\leq n_{2,2}}\biggr\},
\mlabel{eq:pr2-2}
\end{equation}
\item $\Rbc$ is the set of cubic relations in arity two:
\begin{align}
\Rbc
:=&~\biggr\{r_{2,3}^{n}(P_k,P_\ell,i):=\sum_{1\leq i\leq s, \atop 1\leq k,\ell\leq t}
\kc{k}{\ell}{i}{1}\treey{\cdlr[0.8]{o}\cdl{ol}\cdr{or}
\node at (ol) [scale=0.6]{$\bullet$};\node at (or) [scale=0.6]{$\bullet$};\node at (-0.4,0.1) {$P_{k}$};\node at (0.4,0.1) {$P_\ell$};\node at (0,0.3) {$i$};}
+\kc{k}{\ell}{i}{2}\treey{\cdlr[0.8]{o} \cdl{ol}\cdr{or}\zhds{o/b}
\node at (ol) [scale=0.6]{$\bullet$};\node at (-0.4,0.1) {$P_k$};\node at (0.25,-0.2) {$P_\ell$};\node at (0,0.3) {$i$};}
+\kc{k}{\ell}{i}{3}\treey{\cdlr[0.8]{o} \cdl{ol}\cdr{or}\zhds{o/b}
\node at (or) [scale=0.6]{$\bullet$};\node at (0.25,-0.2) {$P_k$};\node at (0.4,0.1) {$P_\ell$};\node at (0,0.3) {$i$};}
+\kc{k}{\ell}{i}{4}\treey{\cdlr[0.8]{o} \cdl{ol}\cdr{or}\node at (0,-0.3) [scale=0.6]{$\bullet$};
\node at (0,-0.1) [scale=0.6]{$\bullet$};\node at (0.25,-0.1) {$P_k$};\node at (0.25,-0.45) {$P_\ell$};\node at (0,0.3) {$i$};}\nonumber\\
&\qquad \qquad \qquad \qquad +\kc{k}{\ell}{i}{5}\treey{\cdlr[0.8]{o} \cdl{ol}\cdr{or}\node at (-0.2,0.2) [scale=0.6]{$\bullet$};
\node at (-0.4,0.4) [scale=0.6]{$\bullet$};\node at (-0.6,0.35) {$P_k$};\node at (-0.3,0) {$P_\ell$};\node at (0,0.3) {$i$};}
+\kc{k}{\ell}{i}{6}\treey{\cdlr[0.8]{o} \cdl{ol}\cdr{or}\node at (0.2,0.2) [scale=0.6]{$\bullet$};
\node at (0.4,0.4) [scale=0.6]{$\bullet$};\node at (0.6,0.35) {$P_k$};\node at (0.3,0) {$P_\ell$};\node at (0,0.3) {$i$};}\quad  \Big| \,{1\leq n\leq n_{2,3}}\biggr\},
\mlabel{eq:pr2-3}
\end{align}
\item $R_{3,2}$ is the set of quadratic relations in arity three:
\begin{equation}
R_{3,2}
:= ~\biggr\{r_{3,2}^n(i,j):=\sum_{1\leq i,j\leq s}\ka{i}{j}{1}
\treey{\cdlr{ol}
\foreach \i/\j in {ol/$i$,o/$j$} {\node[above] at (\i) {\j};}}
+\ka{i}{j}{2}\treey{\cdlr{or}
\foreach \i/\j in {or/$j$,o/$i$} {\node[above] at (\i) {\j};}}
\quad \Big| \,{1\leq n\leq n_{3,2}}\biggr\},
\mlabel{eq:pr3}
\end{equation}
\item $R_{3,3}$ is the set of cubic relations in arity three:
\begin{equation}
\begin{split}
R_{3,3}:= ~\biggr\{&r_{3,3}^n(P_k, i,j):=\sum_{1\leq k\leq t \atop  1\leq i,j\leq s}
\kab{k}{i}{j}{1}\treey{\cdlr{ol}\foreach \i/\j in {ol/$i$,o/$j$} {\node[above] at (\i) {\j};}\node at (-0.65,0.3) {$P_k$};\node at (-0.5,0.5) [scale=0.6]{$\bullet$};}
+\kab{k}{i}{j}{2}\treey{\cdlr{ol}\node at (-0.45,0.2) {$i$};\node at (-0.2,-0.1) {$j$};\node at (0.05,0.4) {$P_k$};\node at (-0.15,0.5) [scale=0.6]{$\bullet$};}
+\kab{k}{i}{j}{3}\treey{\cdlr{ol}\foreach \i/\j in {ol/$i$,o/$j$} {\node[above] at (\i) {\j};}\node at (-0.25,0) {$P_k$};\node at (-0.2,0.2) [scale=0.6]{$\bullet$};}
+\kab{k}{i}{j}{4}\treey{\cdlr{ol}\foreach \i/\j in {ol/$i$,o/$j$} {\node[above] at (\i) {\j};}\node at (0.35,0) {$P_k$};\node at (0.2,0.2) [scale=0.6]{$\bullet$};}
+\kab{k}{i}{j}{5}\treey{\cdlr{ol}\foreach \i/\j in {ol/$i$,o/$j$} {\node[above] at (\i) {\j};}\node at (0.25,-0.2) {$P_k$};\node at (0,-0.2) [scale=0.6]{$\bullet$};}\\
&+\kab{k}{i}{j}{6}\treey{\cdlr{or}\node at (0.45,0.2) {$i$};\node at (-0.2,-0.1) {$j$};\node at (-0.05,0.4) {$P_k$};\node at (0.15,0.5) [scale=0.6]{$\bullet$};}
+\kab{k}{i}{j}{7}\treey{\cdlr{or}\foreach \i/\j in {or/$j$,o/$i$} {\node[above] at (\i) {\j};}\node at (0.65,0.3) {$P_k$};\node at (0.5,0.5) [scale=0.6]{$\bullet$};}
+\kab{k}{i}{j}{8}\treey{\cdlr{or}\foreach \i/\j in {or/$j$,o/$i$} {\node[above] at (\i) {\j};}\node at (-0.3,0) {$P_k$};\node at (-0.2,0.2) [scale=0.6]{$\bullet$};}
+\kab{k}{i}{j}{9}\treey{\cdlr{or}\foreach \i/\j in {or/$j$,o/$i$} {\node[above] at (\i) {\j};}\node at (0.3,0) {$P_k$};\node at (0.2,0.2) [scale=0.6]{$\bullet$};}
+\kab{k}{i}{j}{10}\treey{\cdlr{or}\foreach \i/\j in {or/$j$,o/$i$} {\node[above] at (\i) {\j};}\node at (0.25,-0.2) {$P_k$};\node at (0,-0.2) [scale=0.6]{$\bullet$};}
\quad \Big| \,{1\leq n\leq n_{3,3}}\biggr\},
\end{split}
\mlabel{eq:pr33}
\end{equation}
\item $R_{4}$ is the set of cubic relations in arity four:
\begin{equation}
R_{4}:= ~\biggr\{r_{4}^n(i,j,p):=\sum_{1\leq i,j,p\leq s}
\kd{i}{j}{p}{1}\treey{\cdlr{ol}\cdlr{oll}\foreach \i/\j in {oll/$i$,ol/$j$,o/$p$}  {\node[above] at (\i) {\j};}}
+\kd{i}{j}{p}{2}\treey{\cdlr{ol}\cdlr{olr}\foreach \i/\j in {olr/$j$,ol/$i$,o/$p$}  {\node[above] at (\i) {\j};}}
+\kd{i}{j}{p}{3}\treey{\cdlr[1.2]o\cdlr{ol}\cdlr{or}\foreach \i/\j in {ol/$i$,or/$p$,o/$j$}  {\node[above] at (\i) {\j};}}
+\kd{i}{j}{p}{4}\treey{\cdlr{or}\cdlr{orl}\foreach \i/\j in {orl/$j$,or/$p$,o/$i$}  {\node[above] at (\i) {\j};}}
+\kd{i}{j}{p}{5}\treey{\cdlr{or}\cdlr{orr}\foreach \i/\j in {orr/$p$,or/$j$,o/$i$} {\node[above] at  (\i) {\j};}}
\  \Big| \,{1\leq n\leq n_{4}}\biggr\}.
\mlabel{eq:pr4}
\end{equation}
\end{enumerate}
\end{enumerate}
Here the Greek letters are coefficients in $\bfk$, and $n_{1,2}, n_{1,3}, n_{2,2}, n_{2,3}, n_{3,2}, n_{3,3}$ and $n_4$ are nonnegative integers, with the convention that when any of the integers is zero, then the corresponding set is empty.
Note that if $\spp$ is a quadratic operad, then the above relations are reduced to
\begin{equation*}
R =R_1\sqcup R_2 \sqcup R_3= R_{1,2}\sqcup \Rbb \sqcup R_{3,2}.
\end{equation*}

We give the following examples to fix notations for later applications and to demonstrate the wide range of structures covered by the notion of \ubqcos.
\begin{exam}\mlabel{ex:nsop}
\begin{enumerate}
\item The \name{associative algebra} is encoded by the associative operad $\as$, which is binary quadratic and defined by
      $$E=E_2=\bfk\treey{\cdlr{o}}\,\text{ and }\, R=R_{3,2}=\left\{\treey{\cdlr{ol}}-\treey{\cdlr{or}}\right\}.$$
\mlabel{exam:0}
\item The \name{differential associative algebra}~\mcite{Ko} is encoded by the differential associative operad $\dera$~\mcite{Lod10}, defined by the unary and binary generators
	$$E_1 = \bfk\treeyy[scale=0.8]{\cdu{o} \node  at (0,0) [scale=0.6]{$\bullet$};\node at (0.3,0) {$d$};}, \quad
	E_2 = \bfk\treey{\cdlr{o}}$$
	and quadratic relations
	$$~R_{2}=R_{2,2}=\left\{
		\treey{\cdlr[0.8]{o} \cdl{ol}\cdr{or}\zhds{o/b}
			\node at (0.2,-0.2) {$d$};}
		-\treey{\cdlr[0.8]{o} \cdl{ol}\cdr{or}
			\node at (ol) [scale=0.6]{$\bullet$};\node at (-0.2,0.5) {$d$};}
		-\treey{\cdlr[0.8]{o} \cdl{ol}\cdr{or}
			\node at (or) [scale=0.6]{$\bullet$};\node at (0.2,0.5) {$d$};}\right\},\quad
		~R_{3}=R_{3,2}=\left\{\treey{\cdlr{ol}}-\treey{\cdlr{or}}\right\}
		$$
	corresponding to the Leibniz rule and the associativity law. \mlabel{ex:diff}
\item The \name{Rota-Baxter associative algebra} (of weight zero)~\mcite{Bax,Gub} is encoded by the Rota-Baxter associative operad $\rba$, defined by the unary binary generators
	$$E_1 = \bfk\treeyy[scale=0.8]{\cdu{o} \node  at (0,0) [scale=0.6]{$\bullet$};\node at (0.3,0) {$P$};}, \quad
	E_2=\bfk\treey{\cdlr{o}}$$
	and the quadratic and cubic relations
	$$~R_{2}=R_{2,3}=\left\{\treey{\cdlr[0.8]{o}\cdl{ol}\cdr{or}
			\node at (ol) [scale=0.6]{$\bullet$};\node at (or) [scale=0.6]{$\bullet$};\node at (-0.2,0.5) {$P$};\node at (0.2,0.5) {$P$};}
		-\treey{\cdlr[0.8]{o} \cdl{ol}\cdr{or}\zhds{o/b}
			\node at (ol) [scale=0.6]{$\bullet$};\node at (-0.2,0.5) {$P$};\node at (0.2,-0.2) {$P$};}
		-\treey{\cdlr[0.8]{o} \cdl{ol}\cdr{or}\zhds{o/b}
			\node at (or) [scale=0.6]{$\bullet$};\node at (0.2,-0.2) {$P$};\node at (0.2,0.5) {$P$};}\right\},\quad
		~R_{3}=R_{3,2}=\left\{\treey{\cdlr{ol}}-\treey{\cdlr{or}}\right\}.
		$$
\mlabel{ex:rba}
\item The Rota-Baxter algebra with nonzero weight $\lambda$ does not give a \ubqco since the Rota-Baxter relation
     $$ P(x)P(y)-P(P(x)y)-P(xP(y))-\lambda P(xy)=0$$
      contains both quadratic and cubic terms. As its homogeneous analog, the  \name{Nijenhuis associative algebra} is encoded by the \ubqco $\na$, with generators
	$$E_1 =\bfk\treeyy[scale=0.8]{\cdu{o} \node  at (0,0) [scale=0.6]{$\bullet$};\node at (0.3,0) {$P$};},
		\quad E_2 = \bfk\treey{\cdlr{o}}$$
	and relations
	$$~R_{2}=R_{2,3}=\left\{\treey{\cdlr[0.8]{o}\cdl{ol}\cdr{or}
			\node at (ol) [scale=0.6]{$\bullet$};\node at (or) [scale=0.6]{$\bullet$};\node at (-0.2,0.5) {$P$};\node at (0.2,0.5) {$P$};}
		-\treey{\cdlr[0.8]{o} \cdl{ol}\cdr{or}\zhds{o/b}
			\node at (ol) [scale=0.6]{$\bullet$};\node at (-0.2,0.5) {$P$};\node at (0.2,-0.2) {$P$};}
		-\treey{\cdlr[0.8]{o} \cdl{ol}\cdr{or}\zhds{o/b}
			\node at (or) [scale=0.6]{$\bullet$};\node at (0.2,-0.2) {$P$};\node at (0.2,0.5) {$P$};}
		+\treey{\cdlr[0.8]{o} \cdl{ol}\cdr{or}\node at (0,-0.1) [scale=0.6]{$\bullet$};\node at (0,-0.3) [scale=0.6]{$\bullet$};\node at (0.2,-0.1) {$P$};\node at (0.2,-0.3) {$P$};}\right\},\quad ~R_{3}=R_{3,2}=\left\{\treey{\cdlr{ol}}-\treey{\cdlr{or}}\right\}.$$
\mlabel{ex:Nij}
\item Let $\Delta$ be a nonempty set. A \name{$\Delta$-differential algebra}~\mcite{Ko} is an algebra with multiple differential operators $\delta\in \Delta$ that commute with each other.
	This algebraic structure is encoded by the unary binary quadratic ns operad, with generators
		$$E_1 = \bfk\left\{\treeyy[scale=0.8]{\cdu{o} \node  at (0,0) [scale=0.6]{$\bullet$};\node at (0.3,0) {$\delta$};}\,\big|\,\delta\in\Delta\right\}, \quad
		E_2 = \bfk\treey{\cdlr{o}}$$
	and relations
\begin{gather*}
R_{1}=R_{1,2}=\left\{\treeyy{\cdu{o} \node  at (0,-0.2) [scale=0.6]{$\bullet$};\node at (0.3,-0.2) {$\delta_1$};
			\node  at (0,0.2) [scale=0.6]{$\bullet$};\node at (0.3,0.2) {$\delta_2$};}-\treeyy{\cdu{o} \node  at (0,-0.2) [scale=0.6]{$\bullet$};\node at (0.3,-0.2) {$\delta_2$};
			\node  at (0,0.2) [scale=0.6]{$\bullet$};\node at (0.3,0.2) {$\delta_1$};}\,\big|\,\delta_1,\delta_2\in\Delta\right\},\quad
		~R_{2}=R_{2,2}=\left\{
		\treey{\cdlr[0.8]{o} \cdl{ol}\cdr{or}\zhds{o/b}
			\node at (0.2,-0.2) {$\delta$};}
		-\treey{\cdlr[0.8]{o} \cdl{ol}\cdr{or}
			\node at (ol) [scale=0.6]{$\bullet$};\node at (-0.2,0.5) {$\delta$};}
		-\treey{\cdlr[0.8]{o} \cdl{ol}\cdr{or}
			\node at (or) [scale=0.6]{$\bullet$};\node at (0.2,0.5) {$\delta$};}\,\Big|\,\delta\in\Delta\right\},\quad\\
		~R_{3}=R_{3,2}=\left\{\treey{\cdlr{ol}}-\treey{\cdlr{or}}\right\}.
\end{gather*}
\mlabel{item:ddas}
\item A \name{Hom-associative algebra}~\mcite{MS} is encoded by the Hom-associative operad $\homop$ defined by the unary and binary generators
	$$E_1 =\bfk\treeyy[scale=0.8]{\cdu{o} \node  at (0,0) [scale=0.6]{$\bullet$};\node at (0.3,0) {$\alpha$};},
		\quad E_2 = \bfk\treey{\cdlr{o}}$$
and cubic relation
$$R_{3}=R_{3,3}=\left\{\treey{\cdlr{ol}\node at (0.1,0.35) {$\alpha$};\node at (0.2,0.2) [scale=0.6]{$\bullet$};}-\treey{\cdlr{or}\node at (-0.1,0.35) {$\alpha$};\node at (-0.2,0.2) [scale=0.6]{$\bullet$};}\right\}.$$
\mlabel{it:homas}
\item A \name{cubic associative algebra}~\mcite{BD} is encoded by the cubic associative operad $\cubas$ define by the binary generator
$$E=E_2 = \bfk\treey{\cdlr{o}}$$
and the cubic relations in arity four
\begin{equation*}
R=R_4=\Biggr\{ \treey{\cdlr{ol}\cdlr{oll}}-\treey{\cdlr{ol}\cdlr{olr}},\quad\treey{\cdlr{ol}\cdlr{oll}}-\treey{\cdlr[1.2]o\cdlr{ol}\cdlr{or}},\quad
\treey{\cdlr{ol}\cdlr{oll}}-\treey{\cdlr{or}\cdlr{orl}},\quad\treey{\cdlr{ol}\cdlr{oll}}-\treey{\cdlr{or}\cdlr{orr}}\Biggr\}.
\end{equation*}
\mlabel{it:cubas}
	\end{enumerate}
\end{exam}

\subsection{Linearly compatible operads}\mlabel{sec:lincom1}

The linear compatibility of a binary quadratic operad with two duplicated copies of  operations has been studied in~\mcite{St}.
\begin{defn}\cite[Definition~A]{St}
	Let $\mscr{P}$ be a binary quadratic operad and $V$ a vector space. Let
	$$A = (V, \mu_1, \ldots, \mu_k)\,\text{ and }\, B = (V, \nu_1, \ldots, \nu_k)$$ be two $\mscr{P}$-algebras such that $\mu_i$ and $\nu_i$ are the corresponding
	binary operations for $1\leq i\leq k$. Denote $\eta_i:= \alpha \mu_i + \beta \nu_i$ for some $\alpha, \beta\in \bfk$.
	The pair $A, B$ are called {\bf linearly compatible} if $C = (V, \eta_1, \ldots, \eta_k)$ is a $\mscr{P}$-algebra for any choice of
	$\alpha$ and $\beta$. This is equivalent to requiring that $C$ is a $\mscr{P}$-algebra for $\alpha = \beta =1$.
	\mlabel{de:lincomp}
\end{defn}
We will study the more general case of a unary binary operad with \qc relations, with any number of replicated copies of the unary and binary operations.
To motivate the general notion, we first discuss an example.
\begin{exam}
Let $V$ be a vector space. Let $A = (V, \multa, \opera)$ and $B = (V, \multb, \operb)$ be Rota-Baxter associative algebras of weight zero on the same underlying vector space. Define
$$\ast:=\alpha\multa+\beta\multb\,\text{ and }\, R:=\alpha \opera+\beta \operb \,\text{ for }\, \alpha,\beta\in\bfk.$$
The two Rota-Baxter algebras are called {\bf linearly compatible} if the triple $(V,\ast, R)$ is still a Rota-Baxter associative algebra of weight zero.

Imposing the associativity for $\ast$ and the Rota-Baxter relation for $R$, we find that $(V, \ast, R)$ is a Rota-Baxter associative algebra for all choices of $\alpha$ and $\beta$ if and only if
\begin{align*}
\prl{\multa}{\multb}+\prl{\multb}{\multa}
=\prr{\multa}{\multb}+\prr{\multb}{\multa},
\end{align*}
and
\begin{equation*}
\begin{split}
&\ra{\opera}{\multb}{\opera} +\ra{\opera}{\multa}{\operb}+\ra{\operb}{\multa}{\opera}\\
=&\rb{\opera}{\multb}{\opera}+\rb{\opera}{\multa}{\operb}\\
&+\rb{\operb}{\multa}{\opera},\\
&\ra{\opera}{\multb}{\operb}+\ra{\operb}{\multb}{\opera}+\ra{\operb}{\multa}{\operb}\\
=&\rb{\opera}{\multb}{\operb}+\rb{\operb}{\multb}{\opera} \\
&+\rb{\operb}{\multa}{\operb}.
\end{split}
\end{equation*}
These should be the relations for the operad encoding linearly compatible Rota-Baxter algebras with two replicated copies.
\mlabel{exam:rbcom}
\end{exam}

Generalizing this to algebras on an ns operad, we give
\begin{defn}
Let $\Omega$ be a nonempty set. Let $\mscr{P} = \mscr{T}(E)/\langle R\rangle $ be an operad with generators $\mu_1,\ldots,\mu_k$ and such that each relation in $R$ is homogeneous. For each $\omega$ in a nonempty set $\Omega$, let $\mu_{\omega,1}, \ldots, \mu_{\omega,k}$ be a replicate of $\mu_1,\ldots,\mu_k$.
Let $V$ be a vector space such that, for each $\omega\in \Omega$, the tuple
$$A_\omega \coloneqq (V, \mu_{\omega,1}, \ldots, \mu_{\omega,k}) $$
is a $\mscr{P}$-algebra. Then $V$ is called an {\bf ($\Omega$-)linearly compatible $\mscr{P}$-algebra} if, for any $\bfc\coloneqq(c_\omega)_{\omega\in \Omega}$ with $c_\omega\in \bfk$ and the linear combination $\nu_i:= \nu_{\bfc,i}:=\sum_{\omega\in\Omega} c_{\omega} \mu_{\omega,i}, i=1,\ldots,k$, the tuple $$A:=A_\bfc: = (V, \nu_1, \ldots, \nu_k)$$
is still a $\mscr{P}$-algebra.
\mlabel{defn:lcalg}
\end{defn}

Since $\mu_{\omega, i}, \omega\in \Omega,$ are spacial cases of $\nu_i, i=1,\ldots,k$, we have

\begin{prop}
Let $\Omega$ be a nonempty set and let $\mscr{P}$ be an operad with generators $\mu_1,\ldots,\mu_k$ and such that each relation in $R$ is homogeneous.
A vector space $V$ is an ($\Omega$-)linearly compatible $\mscr{P}$-algebra if and only if $(V,\nu_1,\ldots,\nu_k)$ is a $\mscr{P}$-algebra for any linear combination $\nu_i:= \sum_{\omega\in\Omega} c_{\omega} \mu_{\omega,i}, c_{\omega}\in \bfk, \omega\in \Omega$.
\mlabel{defn:lcalg2}
\end{prop}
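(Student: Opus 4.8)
The plan is to prove both implications directly; each is essentially immediate, the nontrivial-looking direction being the one-line observation recorded in the sentence preceding the statement. For the forward implication, assume $V$ is an ($\Omega$-)linearly compatible $\mscr{P}$-algebra in the sense of Definition~\mref{defn:lcalg}. Then by definition the tuple $(V,\nu_1,\ldots,\nu_k)$, with $\nu_i=\sum_{\omega\in\Omega}c_\omega\mu_{\omega,i}$, is a $\mscr{P}$-algebra for every scalar tuple $\bfc=(c_\omega)_{\omega\in\Omega}$, which is exactly the asserted condition.

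For the backward implication, assume $(V,\nu_1,\ldots,\nu_k)$ is a $\mscr{P}$-algebra for every linear combination $\nu_i=\sum_{\omega\in\Omega}c_\omega\mu_{\omega,i}$. I would first check that the standing hypothesis of Definition~\mref{defn:lcalg} --- that each replicate $A_\omega=(V,\mu_{\omega,1},\ldots,\mu_{\omega,k})$ is itself a $\mscr{P}$-algebra --- is automatic: fixing $\omega_0\in\Omega$ and specializing the scalars to $c_{\omega_0}=1$ and $c_\omega=0$ for $\omega\neq\omega_0$ turns $\nu_i$ into $\mu_{\omega_0,i}$ for every $i$, so the hypothesis forces $A_{\omega_0}$ to be a $\mscr{P}$-algebra. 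The remaining requirement in Definition~\mref{defn:lcalg}, that every linear combination of the replicates again yields a $\mscr{P}$-algebra, is literally our hypothesis, so $V$ is an ($\Omega$-)linearly compatible $\mscr{P}$-algebra.

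There is no real obstacle here. The only point is that the replicates $\mu_{\omega,i}$ themselves occur among the combinations $\nu_i$, as the coordinate tuples, so that requiring the $\mscr{P}$-algebra axioms for all combinations is a priori equivalent to requiring them for the replicates together with linear compatibility. It is worth noting that no finiteness or finite-support condition on $\bfc$ is needed for the argument, since the backward direction uses only single-coordinate tuples and the forward direction merely quotes the definition; the homogeneity hypothesis on $R$ likewise plays no role in this equivalence and is carried along only as part of the running setup.
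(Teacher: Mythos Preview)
Your proof is correct and follows exactly the paper's approach: the paper's entire argument is the single sentence preceding the proposition, ``Since $\mu_{\omega, i}, \omega\in \Omega,$ are special cases of $\nu_i, i=1,\ldots,k$, we have'', which is precisely your observation that choosing the coordinate tuple $c_{\omega_0}=1$, $c_\omega=0$ for $\omega\neq\omega_0$ recovers each replicate $A_{\omega_0}$ as a particular $A_\bfc$. Your additional remarks on finiteness and homogeneity are accurate but go beyond what the paper records.
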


Now we determine the operad that encodes linearly compatible $\mscr{P}$-algebras for any unary binary \qc operad $\mscr{P}$.
Let $\mscr{P}=\mscr{T}(E)/\langle R\rangle $ be such an operad, with its operations and relations shown in~\meqref{eq:unaryop} -- \meqref{eq:pr4}.
Let $\Omega$ be a nonempty set.  We consider a family of ns operads
\begin{equation*}
	\mscr{P}_\omega=\mscr{T}(E_\omega)/ \langle R_\omega\rangle, \,\text{ for }\,\omega\in\Omega,
\end{equation*}
that are just copies of $\mscr{P}$.
As for $\spp$, we describe $\mscr{P}_\omega$ by the generators
\begin{equation}
E_{\omega,1}:= ~\bfk\biggr\{
\treeyy[scale=0.8]{\cdu{o} \node  at (0,0)[scale=0.6] {$\bullet$};\node at (0.35,0) {$P_{\omega,1}$};},
\treeyy[scale=0.8]{\cdu{o} \node  at (0,0) [scale=0.6]{$\bullet$};\node at (0.35,0) {$P_{\omega,2}$};},~...~,
\treeyy[scale=0.8]{\cdu{o} \node  at (0,0) [scale=0.6]{$\bullet$};\node at (0.35,0) {$P_{\omega,t}$};}
\biggr\}~~,
~~
E_{\omega,2}:= ~\bfk\biggr\{
\treey{\cdlr{o}\node  at (0,0.35) {$1_\omega$}; },
\treey{\cdlr{o}\node  at (0,0.35) {$2_\omega$};},~... ~,
\treey{\cdlr{o}\node  at (0,0.35) {$s_\omega$};}
\biggr\}\,,
E_{\omega,n}:=0,\, n\neq1,2, \omega\in\Omega.
\mlabel{eq:opcirc}
\end{equation}
The relations
\begin{equation}
R_\omega:=R_{\omega,1,2}\sqcup R_{\omega,1,3}\sqcup \Rbbw\sqcup\Rbcw \sqcup R_{\omega,3,2}\sqcup R_{\omega,3,3}\sqcup R_{\omega,4},\,\text{ for }\, \omega\in\Omega\mlabel{eq:recb}
\end{equation}
are given by the same coefficients as the corresponding ones of $R$.
For example,
$$R_{\omega,1,2}= ~\biggr\{r_{1,2}^n(P_{\omega,k},P_{\omega,\ell}):=\sum_{1\leq k,\ell\leq t}\alpha_{k,\ell}^{n}~~
\treeyy{\cdu{o} \node  at (0,-0.2)[scale=0.6] {$\bullet$};\node at (0.35,-0.2) {$P_{\omega,\ell}$};
\node  at (0,0.2)[scale=0.6] {$\bullet$};\node at (0.35,0.2) {$P_{\omega,k}$};}
\quad \Big| ~{1\leq n\leq n_{1,2}}\biggr\}.$$
Define the arity graded space
$$\bigoplus\limits_{\omega\in\Omega}E_\omega :=~\biggr\{0,\bigoplus\limits_{\omega\in\Omega}E_{\omega,1},\bigoplus\limits_{\omega\in\Omega}E_{\omega,2},0\ldots\biggr\}.$$
For each $\omega\in\Omega,$ by embedding $\spp_\omega$ into $\stt\big( \bigoplus\limits_{\omega\in\Omega}E_\omega\big)/\big\langle  \bigcup\limits_{\omega\in\Omega} R _\omega\big\rangle $,
a $\stt\big( \bigoplus\limits_{\omega\in\Omega}E_\omega\big)/\langle  \bigcup\limits_{\omega\in\Omega} R _\omega\rangle $-algebra is a family of $\mscr{P}$-algebras which are
not necessarily compatible with one another in any way.

To describe the compatible conditions among the copies $\mscr{P}_\omega$, we build on the notations in Eqs.~\meqref{eq:pr1} -- \meqref{eq:pr4} and define, for $\mu, \nu,\omega\in\Omega$,
\begin{align}
 R_{1,2}^{\mu,\nu}:=&~\big\{r_{1,2}^n(P_{\mu,k},P_{\nu,\ell})\, \big| \,{1\leq n\leq n_{1,2}}\big\},\mlabel{eq:pr2b}\\
 R_{1,3}^{\mu,\nu,\omega} :=& ~\big\{r_{1,3}^n(P_{\mu,k},P_{\nu,\ell}, P_{\omega,m})\,\big|\,{1\leq n\leq n_{1,3}}\big\},\mlabel{eq:pr1c}\\
 R_{2,2}^{\mu,\nu}:=&~\big\{r_{2,2}^n(P_{\mu,k},i_\nu)\, \big| \,{1\leq n\leq n_{2,2}}\big\},\mlabel{eq:pr2-2b}\\
 R_{2,3}^{\mu,\nu,\omega}:=&~\big\{r_{2,3}^n(P_{\mu,k},P_{\nu,\ell},i_\omega)\, \big| \,{1\leq n\leq n_{2,3}}\big\},\mlabel{eq:pr2-3b}\\
 R_{3,2}^{\mu,\nu}:=&~\big\{r_{3,2}^n(i_\mu,j_\nu)\,\big| \,{1\leq n\leq n_{3,2}}\big\},\mlabel{eq:pr3b}\\
 R_{3,3}^{\mu,\nu,\omega}:=&~\big\{r_{3,3}^n(P_{\mu,k}, i_\nu,j_\omega)\, \big| \,{1\leq n\leq n_{3,3}}\big\},\mlabel{eq:pr3c}\\
 R_{4}^{\mu,\nu,\omega}:=&~\big\{r_{4}^n(i_\mu,j_\nu,p_\omega)\, \big| \,{1\leq n\leq n_{4}}\big\}.\mlabel{eq:pr4c}
\end{align}
For example,
\begin{align*}
R_{1,2}^{\mu,\nu}
=& ~\biggr\{r_{1,2}^n(P_{\mu,k},P_{\nu,\ell}):=\sum_{1\leq k,\ell\leq t}\alpha_{k,\ell}^{n}~~
\treeyy{\cdu{o} \node  at (0,-0.2)[scale=0.6] {$\bullet$};\node at (0.35,-0.2) {$P_{\nu,\ell}$};
\node  at (0,0.2)[scale=0.6] {$\bullet$};\node at (0.35,0.2) {$P_{\mu,k}$};}
\quad \Big| \,{1\leq n\leq n_{1,2}}\biggr\},\\
R_{2,2}^{\mu,\nu}
=& ~\biggr\{r_{2,2}^n(P_{\mu,k},i_\nu):=\sum_{1\leq i\leq s, \atop 1\leq k\leq t}
\kb{k}{i}{1}\treey{\cdlr[0.8]{o} \cdl{ol}\cdr{or}\zhds{o/b}
			\node at (0.35,-0.2) {$P_{\mu,k}$};\node at (0,0.3) {$i_\nu$};\node at (0,0.3) {$i_\nu$};}
+\kb{k}{i}{2}\treey{\cdlr[0.8]{o} \cdl{ol}\cdr{or}
			\node at (ol) [scale=0.6]{$\bullet$};\node at (-0.5,0.1) {$P_{\mu,k}$};\node at (0,0.3) {$i_\nu$};}
+\kb{k}{i}{3}\treey{\cdlr[0.8]{o} \cdl{ol}\cdr{or}
			\node at (or) [scale=0.6]{$\bullet$};\node at (0.5,0.1) {$P_{\mu,k}$};\node at (0,0.3) {$i_\nu$};}
		\quad \Big| \,{1\leq n\leq n_{2,2}}\biggr\}.
\end{align*}
Note that, comparing the above notions with Eq.~\meqref{eq:recb}, we have
\begin{equation}
R_{i,2}^{\omega,\omega}=R_{\omega,i,2}, \,R_{i,3}^{\omega,\omega,\omega}=R_{\omega,i,3}\,\text{ and }\, R_4^{\omega,\omega,\omega}=R_{\omega,4},\,\text{ for }\, i={1,2,3} \,\text{ and } \omega\in\Omega.
\mlabel{eq:spr123}
\end{equation}

We will use the following general notation for our discussion.
\begin{notation}
Let $X: = \{x_i\,|\, i\in I\}$ and $Y: = \{y_j\,|\,j\in I\}$ be two sets of elements in a vector space $V$, parameterized by the same set $I$.
Denote
$X\add Y:= \{x_i+ y_i\,|\, i\in I\}.$
\mlabel{nn:add}
\end{notation}
In particular, when $I=[n]:=\{1,\ldots,n\}$, then the above $X$ and $Y$ can be regarded as vectors $X=(x_1,\ldots,x_n)$ and $Y=(y_1,\dots,y_n)$. Then $X\add Y$ is simply the addition of the two vectors.

Then together with the notations in Eqs.~\meqref{eq:pr2b} -- \meqref{eq:pr4c}, we have
\begin{align*}
R^{\mu,\nu}_{1,2}\add R^{\omega,\tau}_{1,2}&=  \Big\{r_{1,2}^n(P_{\mu,k},P_{\nu,\ell})+r_{1,2}^n(P_{\omega,k},P_{\tau,\ell})\, \big| \,{1\leq n\leq n_{1,2}}\Big\}\\
&=  \Big\{r_{1,2}^n(P_{\mu,k}+P_{\omega,k},P_{\nu,\ell}+P_{\tau,\ell})\,\big|\,{1\leq n\leq n_{1,2}}\Big\},\\
R^{\mu,\nu,\omega}_{1,3}\add R^{\lambda,\tau,\rho}_{1,3}&=  \Big\{r_{1,3}^n(P_{\mu,k},P_{\nu,\ell},P_{\omega,m})+r_{1,3}^n(P_{\lambda,k},P_{\tau,\ell},P_{\rho,m})\,\big|\,{1\leq n\leq n_{1,3}}\Big\}\\
&=  \Big\{r_{1,3}^n(P_{\mu,k}+P_{\lambda,k},P_{\nu,\ell}+P_{\tau,\ell},P_{\omega,m}+P_{\rho,m})\,\big|\,{1\leq n\leq n_{1,3}}\Big\}, \\
R^{\mu,\nu}_{2,2}\add R^{\omega,\tau}_{2,2}&=  \Big\{r_{2,2}^n(P_{\mu,k},i_\nu)+r_{2,2}^n(P_{\omega,k},i_\tau)\,\big|\,{1\leq n\leq n_{2,2}}\Big\}
=  \Big\{r_{2,2}^n(P_{\mu,k}+P_{\omega,k},i_\nu+i_\tau)\,\big|\,{1\leq n\leq n_{2,2}}\Big\}, \\
R^{\mu,\nu,\omega}_{2,3}\add R^{\lambda,\tau,\rho}_{2,3}&=  \Big\{r_{2,3}^n(P_{\mu,k},P_{\nu,\ell},i_\omega)+r_{2,3}^n(P_{\lambda,k},P_{\tau,\ell},i_\rho)\,\big|\,{1\leq n\leq n_{2,3}}\Big\}\\
&=  \Big\{r_{2,3}^n(P_{\mu,k}+P_{\lambda,k},P_{\nu,\ell}+P_{\tau,\ell},i_\omega+i_\rho)\,\big|\,{1\leq n\leq n_{2,3}}\Big\}, \\
R^{\mu,\nu}_{3,2}\add R^{\omega,\tau}_{3,2}&=  \Big\{r_{3,2}^n(i_\mu,j_\nu)+r_{3,2}^n(i_\omega,j_\tau) \,\big|\,{1\leq n\leq n_{3,2}}\Big\}
=  \Big\{r_{3,2}^n(i_\mu+i_\omega,j_\nu+j_\tau)
\,\big|\,{1\leq n\leq n_{3,2}}\Big\},\\
R_{3,3}^{\mu,\nu,\omega}\add R_{3,3}^{\lambda,\tau,\rho}&= \Big\{r_{3,3}^n(P_{\mu,k}, i_\mu,j_\omega)+r_{3,3}^n(P_{\lambda,k}, i_\tau,j_\rho)\, \big| \,{1\leq n\leq n_{3,3}}\Big\}\\
&= \Big\{r_{3,3}^n(P_{\mu,k}+P_{\lambda,k}, i_\mu+i_\tau,j_\omega+j_\rho)\, \big| \,{1\leq n\leq n_{3,3}}\Big\},\\
R_{4}^{\mu,\nu,\omega}\add R_{4}^{\lambda,\tau,\rho}&= \Big\{r_{4}^n(i_\mu,j_\nu,p_\omega)+r_{4}^n(i_\lambda,j_\tau,p_\rho)\, \big| \,{1\leq n\leq n_{4}}\Big\}
= \Big\{r_{4}^n(i_\mu+i_\lambda,j_\nu+j_\tau,p_\omega+p_\rho)\, \big| \,{1\leq n\leq n_{4}}\Big\}.
\end{align*}

Now we are ready for our main concept in this section. Set
\begin{equation}
\begin{split}
\lrr:=&\underset{i\in\{1,2,3\}} {\bigcup }\underbrace{\Big(\underset{\mu\neq\nu\in\Omega} {\bigcup }\big( R^{\mu,\nu}_{i,2}\add R^{\nu,\mu}_{i,2}\big)\cup
\underset{\mu\neq\nu\in\Omega} {\bigcup }\big( R^{\mu,\mu,\nu}_{i,3}\add R^{\mu,\nu,\mu}_{i,3}\add R^{\nu,\mu,\mu}_{i,3}\big)\cup
\underset{\mnodis} {\bigcup } R^{\mu,\nu,\omega}_{i,3}\Big)}_{\text{arity $i$}}\\
&\cup\underbrace{\Big(\underset{\mu\neq\nu\in\Omega} {\bigcup }\big( R^{\mu,\mu,\nu}_{4}\add R^{\mu,\nu,\mu}_{4}\add R^{\nu,\mu,\mu}_{4}\big)\cup
\underset{\mnodis} {\bigcup } R^{\mu,\nu,\omega}_{4}\Big)}_{\text{arity $4$}}.
\end{split}
\mlabel{eq:lin1}
\end{equation}
\begin{defn}
Let $\Omega$ be a nonempty set.
Let $\mscr{P}$ be a unary binary \qc ns operad given in Eq.~\meqref{eq:ubqcop}.
We call the ns operad
$$\lin{\mscr{P}}_\Omega:=\mscr{T}\Big(\bigoplus\limits_{\omega\in\Omega} E_\omega\Big)\Big/\Big\langle \bigcup\limits_{\omega\in\Omega}R_\omega\cup \lrr\Big\rangle $$
the \textbf{linearly compatible operad} of $\spp$ with parameter $\Omega$.
\mlabel{defn:comp1}
\end{defn}
Encoding linearly compatible algebras, we give
\begin{theorem}
Let $\Omega$ be a nonempty set. Let $\spp=\spp(E,R)$ be a \ubqco. A vector space $V$ is
a $\lin{\spp}_\Omega$-algebra if and only if it is an $\Omega$-linearly compatible $\spp$-algebra.
\mlabel{thm:comp}
\end{theorem}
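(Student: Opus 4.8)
The plan is to unwind both sides of the claimed equivalence to the same condition on the structure maps $\rho\colon \lin{\spp}_\Omega\to \mathrm{End}_V$, using the universal property of the free operad and the presentation by generators and relations. First I would recall that, by Definition~\ref{def:palg}, a $\lin{\spp}_\Omega$-algebra is exactly a vector space $V$ together with a morphism of ns operads from $\mscr{T}\big(\bigoplus_{\omega}E_\omega\big)$ to $\mathrm{End}_V$ that vanishes on the ideal generated by $\bigcup_{\omega}R_\omega\cup\lrr$. By freeness, giving such a morphism out of $\mscr{T}\big(\bigoplus_\omega E_\omega\big)$ is the same as giving, for each $\omega\in\Omega$, a replicate of operations $P_{\omega,1},\ldots,P_{\omega,t}$ (unary) and $1_\omega,\ldots,s_\omega$ (binary) on $V$, with no constraints; and the morphism kills the ideal if and only if it kills each generating relation. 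So a $\lin{\spp}_\Omega$-algebra is precisely a family $(P_{\omega,j}, i_\omega)_{\omega\in\Omega}$ of operations on $V$ such that (i) for each fixed $\omega$, the relations in $R_\omega$ hold — i.e.\ each $A_\omega$ is a $\spp$-algebra — and (ii) all the cross relations collected in $\lrr$ hold.

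The heart of the argument is to show that condition (ii), given (i), is equivalent to: for every $\bfc=(c_\omega)_{\omega\in\Omega}$, the combined operations $\nu_j=\sum_\omega c_\omega\,(\cdot)_{\omega,j}$ satisfy all relations of $\spp$, which by Proposition~\ref{defn:lcalg2} is exactly the condition that $V$ is an $\Omega$-linearly compatible $\spp$-algebra. For one direction I would substitute $\nu_j=\sum_\omega c_\omega(\cdot)_{\omega,j}$ into a generic relation $r$ of $\spp$ (one of $r_{i,2}^n$, $r_{i,3}^n$, $r_4^n$) and expand multilinearly in the arguments: since $r$ is homogeneous of weight $2$, $3$, or $4$, the result is a sum over tuples of indices from $\Omega$ of terms $c_\mu c_\nu\,r^n_{i,2}(P_{\mu,k},i_\nu)$, or $c_\mu c_\nu c_\omega\,r^n(\cdots)$, etc. Grouping by the unordered index multiset and using that the coefficients $c_\omega$ are arbitrary (char.\ $\bfk=0$, so we may compare coefficients of the independent monomials $c_\mu^2$, $c_\mu c_\nu$, $c_\mu^3$, $c_\mu^2 c_\nu$, $c_\mu c_\nu c_\omega$), the vanishing of the whole expression for all $\bfc$ is equivalent to the vanishing of: the "pure" terms $R^{\omega,\omega}_{i,2}$, $R^{\omega,\omega,\omega}_{i,3}$, $R^{\omega,\omega,\omega}_4$ (which by~\eqref{eq:spr123} are $R_{\omega,i,2}$ etc., i.e.\ condition (i)); the symmetrized "mixed-pair" terms $R^{\mu,\nu}_{i,2}\add R^{\nu,\mu}_{i,2}$ and $R^{\mu,\mu,\nu}_{i,3}\add R^{\mu,\nu,\mu}_{i,3}\add R^{\nu,\mu,\mu}_{i,3}$ (and the arity-$4$ analogue); and the fully-mixed terms $R^{\mu,\nu,\omega}_{i,3}$, $R^{\mu,\nu,\omega}_4$ with $\mu,\nu,\omega$ distinct — which is precisely the list assembled in $\lrr$ in~\eqref{eq:lin1}. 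Running the computation in reverse gives the converse: if (i) and (ii) hold then every coefficient in the expansion vanishes, so $(V,\nu_1,\ldots,\nu_k)$ is a $\spp$-algebra for all $\bfc$.

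Concretely I would organize the proof as: (1) translate "$\lin{\spp}_\Omega$-algebra" into "family of operations killing $\bigcup_\omega R_\omega\cup\lrr$" via freeness and the ideal presentation; (2) translate "$\Omega$-linearly compatible $\spp$-algebra" into "each $A_\omega$ is a $\spp$-algebra and $(V,\nu_\bfc)$ is a $\spp$-algebra for all $\bfc$" via Definition~\ref{defn:lcalg} and Proposition~\ref{defn:lcalg2}; (3) prove the key lemma that, for a homogeneous relation $r$ of weight $w\le 4$ and operations indexed by $\Omega$, the identity "$r(\nu_\bfc,\ldots)=0$ for all $\bfc$" holds iff each of the coefficient-wise pieces (pure, symmetrized partially-mixed, fully-mixed) vanishes, using multilinear expansion and the fact that monomials in the $c_\omega$ of multidegree $\le 4$ are linearly independent over a characteristic-zero field; (4) match the pieces from step (3) with the defining sets $R_\omega$ and $\lrr$, citing~\eqref{eq:spr123} and~\eqref{eq:lin1}. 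The main obstacle, and the only place needing real care rather than bookkeeping, is step (3): one must be precise that expanding a weight-$w$ relation in each of its $w$ decorated vertices produces exactly the index-patterns appearing in $\lrr$ — in particular that weight-$3$ relations of arity $1,2,3$ all produce the same three families $R^{\omega\omega\omega}$, $R^{\mu\mu\nu}\!\add\! R^{\mu\nu\mu}\!\add\! R^{\nu\mu\mu}$, $R^{\mu\nu\omega}$ (distinct), matching~\eqref{eq:lin1}, and that no spurious constraint is created or missed. The rest is a routine, if lengthy, verification case by case over the relation types in~\eqref{eq:pr1}--\eqref{eq:pr4}, which I would either spell out for one representative (say the Rota-Baxter relations of Example~\ref{exam:rbcom}) and indicate is identical in the other cases, or relegate to the multilinearity lemma applied uniformly.
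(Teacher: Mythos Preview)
Your proposal is correct and follows essentially the same route as the paper's proof: both sides are unwound to conditions on the operations via freeness, the combined relation $r(\nu_\bfc,\ldots)$ is expanded multilinearly in the $c_\omega$, and the arbitrariness of $\bfc$ (over characteristic zero) is used to separate the expansion into the pure, symmetrized-pair, and fully-mixed pieces that constitute $\bigcup_\omega R_\omega\cup\lrr$. One small imprecision: the relations here are quadratic or cubic, i.e.\ of \emph{weight} $2$ or $3$ (the arity-$4$ relations $R_4$ are still cubic), so the monomials in the $c_\omega$ have total degree $2$ or $3$, not up to $4$; this does not affect your argument.
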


\begin{proof}
Let $\mscr{P}$ be a unary binary \qc ns operad given in Eq.~\meqref{eq:ubqcop}. Let $V$ be a vector space.
On the one hand, by Definition~\mref{defn:comp1}, $V$ is a $\lin{\spp}_\Omega$-algebra if and only if elements of $V$ satisfy the relations in $\bigcup\limits_{\omega\in\Omega}R_\omega\cup \lrr$.

On the other hand, if $V$ is an $\Omega$-linearly compatible $\spp$-algebra.
Then by Proposition~\mref{defn:lcalg2}, $V$ with operations
$$\sum_{\omega\in\Omega}c_\omega P_{\omega,1},\ldots,\sum_{\omega\in\Omega}c_\omega P_{\omega,t}, \sum_{\omega\in\Omega}c_\omega 1_\omega,\ldots, \sum_{\omega\in\Omega}c_\omega s_\omega$$
is a $\spp$-algebra for any choice of $\bfc:=(c_\omega)_{\omega\in \Omega}$ with $c_\omega\in \bfk$,
that is, elements of $V$ satisfy the relations in
$$R_\bfc = \rc{_{1,2}} \sqcup \rc{_{1,3}} \sqcup \rc{_{2,2}} \sqcup \rc{_{2,3}}\sqcup \rc{_{3,2}} \sqcup\rc{_{3,3}} \sqcup\rc{_4} .$$
Explicitly,
\begin{eqnarray*}
\rc{_{1,2}} &=&\biggr\{r_{1,2}^n\Big(\sum_{\omega\in\Omega}c_\omega P_{\omega,k}, \sum_{\omega\in\Omega}c_\omega P_{\omega,\ell}\Big)\,\Big|\,{1\leq n\leq n_{1,2}}\biggr\},\\
\rc{_{1,3}} &=&\biggr\{r_{1,3}^n\Big(\sum_{\omega\in\Omega}c_\omega P_{\omega,k}, \sum_{\omega\in\Omega}c_\omega P_{\omega,\ell},\sum_{\omega\in\Omega}c_\omega P_{\omega,m}\Big)\,\Big|\,{1\leq n\leq n_{1,3}}\biggr\},\\
\rc{_{2,2}} &=&\biggr\{r_{2,2}^n\Big(\sum_{\omega\in\Omega}c_\omega P_{\omega,k},\sum_{\omega\in\Omega}c_\omega i_\omega\Big)\,\Big|\,{1\leq n\leq n_{2,2}}\biggr\},\\
\rc{_{2,3}} &=&\biggr\{r_{2,3}^n\Big(\sum_{\omega\in\Omega}c_\omega P_{\omega,k},\sum_{\omega\in\Omega}c_\omega P_{\omega,\ell},\sum_{\omega\in\Omega}c_\omega i_\omega\Big)\,\Big|\,{1\leq n\leq n_{2,3}}\biggr\},\\
\rc{_{3,2}} &=&\biggr\{r_{3,2}^n\Big(\sum_{\omega\in\Omega}c_\omega i_\omega,\sum_{\omega\in\Omega}c_\omega j_\omega\Big)\,\Big|\,{1\leq n\leq n_{3,2}}\biggr\},\\
\rc{_{3,3}} &=&\biggr\{r_{3,3}^n\Big(\sum_{\omega\in\Omega}c_\omega P_{\omega,k},\sum_{\omega\in\Omega}c_\omega i_\omega,\sum_{\omega\in\Omega}c_\omega j_\omega\Big)\,\Big|\,{1\leq n\leq n_{3,3}}\biggr\},\\
\rc{_{4}} &=&\biggr\{r_{4}^n\Big(\sum_{\omega\in\Omega}c_\omega i_\omega,\sum_{\omega\in\Omega}c_\omega j_\omega,\sum_{\omega\in\Omega}c_\omega p_\omega\Big)\,\Big|\,{1\leq n\leq n_{4}}\biggr\}.\\
\end{eqnarray*}
Thus we are left to verify
$$\Big\langle R_\bfc\Big\rangle=\Big\langle \bigcup\limits_{\omega\in\Omega}R_\omega\cup \lrr\Big\rangle,$$
which, by comparing the arities and Eq.~(\mref{eq:lin1}), is equivalent to the following equations, for $i={1,2,3}$:
\begin{align}
\bfk\rc{_{i,2}} &=\bfk\Big\{ \bigcup\limits_{\omega\in\Omega}R_{\omega,i,2}\cup \bigcup\limits_{\mu\neq\nu\in \Omega} R_{i,2}^{\mu,\nu}\add  R_{i,2}^{\nu,\mu} \Big\}, \label{eq:lastri2}\\
\bfk  \rc{_{i,3}}&=\bfk\Big\{\bigcup\limits_{\omega\in\Omega}R_{\omega,i,3}\cup \underset{\mu\neq\nu\in\Omega} {\bigcup }\big( R^{\mu,\mu,\nu}_{i,3}\add R^{\mu,\nu,\mu}_{i,3}\add R^{\nu,\mu,\mu}_{i,3}\big)\cup
\underset{\mnodis} {\bigcup } R^{\mu,\nu,\omega}_{i,3}\Big\}, \label{eq:lastri3}\\
\bfk  \rc{_4}&=\bfk\Big\{ \bigcup\limits_{\omega\in\Omega}R_{\omega,4}\cup \underset{\mu\neq\nu\in\Omega} {\bigcup }\big( R^{\mu,\mu,\nu}_{4}\add R^{\mu,\nu,\mu}_{4}\add R^{\nu,\mu,\mu}_{4}\big)\cup
\underset{\mnodis} {\bigcup } R^{\mu,\nu,\omega}_{4}\Big\}. \label{eq:lastr4}
\end{align}
Now expanding the set of quadratic relations $\rc{_1}$ by linearity, we obtain
\begin{eqnarray*}
&&\bfk\rc{_{1,2}}\\
&=&\bfk\biggr\{
r_{1,2}^n\big(\sum_{\omega\in\Omega}c_\omega P_{\omega,k}, \sum_{\omega\in\Omega}c_\omega P_{\omega,\ell}\big)
 \,\Big|\,{1\leq n\leq n_{1,2}}\biggr\}\\
&=&\bfk\biggr\{\sum_{\mu,\nu\in\Omega}c_\mu c_\nu r_{1,2}^n\big( P_{\mu,k}, P_{\nu,\ell}\big)
 \,\Big|\,{1\leq n\leq n_{1,2}}\biggr\}\\
&=&\bfk\biggr\{\sum_{\mu\in\Omega}c_\omega^2 r_{1,2}^n\big( P_{\omega,k}, P_{\omega,\ell}\big)
 +\sum_{\mu\neq\nu\in\Omega}c_\mu c_\nu r_{1,2}^n\big( P_{\mu,k}, P_{\nu,\ell}\big)
 \,\Big|\,{1\leq n\leq n_{1,2}}\biggr\}\\
&=&\bfk\biggr\{\sum_{\omega\in\Omega}c_\omega^2 r_{1,2}^n\big( P_{\omega,k}, P_{\omega,\ell}\big)
 +\frac{1}{2}\sum_{\mu\neq\nu\in\Omega}c_\mu c_\nu \Big(r_{1,2}^n\big( P_{\mu,k}, P_{\nu,\ell}\big)+r_{1,2}^n\big( P_{\nu,k}, P_{\mu,\ell}\big)\Big)
 \,\Big|\,{1\leq n\leq n_{1,2}}\biggr\}\\
&=&\bfk \biggr\{r_{1,2}^n\big( P_{\omega,k}, P_{\omega,\ell}\big), r_{1,2}^n\big( P_{\mu,k}, P_{\nu,\ell}\big)+ r_{1,2}^n\big( P_{\nu,k}, P_{\mu,\ell}\big)\,\Big|\,\omega \in \Omega, \mu\neq\nu\in\Omega\biggr\}\\
&& \hspace{5cm} \quad(\text{by the arbitrariness of } c_\omega, c_\mu, c_\nu\in\bfk)\\
&=&\bfk\biggr(\bigcup\limits_{\omega\in\Omega}R_{\omega,1,2}\cup\bigcup\limits_{\mu\neq\nu\in \Omega} R_{1,2}^{\mu,\nu}\add R_{1,2}^{\nu,\mu}\biggr)
\end{eqnarray*}
Thus Eq.~(\mref{eq:lastri2}) holds for $i=1$.
Similarly, expanding $\rc{_{2,2}}$ and $\rc{_{3,2}}$ by linearity, we obtain Eq.~(\mref{eq:lastri2}) for $i=2,3$.

For the cubic relations $\rc{_{2,3}}$, expanding $\rc{_{2,3}}$ by linearity, we have
\begin{eqnarray*}
\bfk\rc{_{2,3}}
&=&\bfk\biggr\{r_{2,3}^n\big(\sum_{\omega\in\Omega}c_\omega P_{\omega,k},\sum_{\omega\in\Omega}c_\omega P_{\omega,\ell},\sum_{\omega\in\Omega}c_\omega i_\omega\big)
\,\Big|\,{1\leq n\leq n_{2,3}}\biggr\}\\
&=&\bfk\biggr\{\sum_{\mu,\nu,\omega\in\Omega}c_\mu c_\nu c_\omega r_{2,3}^n\big(P_{\mu,k},P_{\nu,\ell},i_\omega\big)
\,\Big|\,{1\leq n\leq n_{2,3}}\biggr\}\\
&=&\bfk\biggr\{\sum_{\omega\in\Omega}c_\omega ^3 r_{2,3}^n\big(P_{\omega,k},P_{\omega,\ell},i_\omega\big)+\sum_{\mnodis}c_\mu c_\nu c_\omega r_{2,3}^n\big(P_{\mu,k},P_{\nu,\ell},i_\omega\big)\\
&&+\sum_{\mu\neq\nu\in\Omega}c_\mu ^2 c_\nu \Big( r_{2,3}^n\big(P_{\mu,k},P_{\mu,\ell},i_\nu\big)+ r_{2,3}^n\big(P_{\mu,k},P_{\nu,\ell},i_\mu\big)
+ r_{2,3}^n\big(P_{\nu,k},P_{\mu,\ell},i_\mu\big)\Big)
\,\Big|\,{1\leq n\leq n_{2,3}}\biggr\}\\
&=&\bfk \biggr\{r_{2,3}^n\big(P_{\omega,k},P_{\omega,\ell},i_\omega\big),\,
r_{2,3}^n\big(P_{\mu,k},P_{\mu,\ell},i_\nu\big)+ r_{2,3}^n\big(P_{\mu,k},P_{\nu,\ell},i_\mu\big)
+ r_{2,3}^n\big(P_{\nu,k},P_{\mu,\ell},i_\mu\big),\,\\
&&r_{2,3}^n\big(P_{\mu,k},P_{\nu,\ell},i_\omega\big)\,\Big|\,\mu,\nu,\omega\in \Omega \text{ distinct} \biggr\}\quad (\text{by the arbitrariness of } c_\mu, c_\nu,c_\omega\in\bfk)\\
&=&\bfk \biggr(\bigcup\limits_{\omega\in\Omega}R_{\omega,2,3}\cup \underset{\mu\neq\nu\in\Omega} {\bigcup }\big( R^{\mu,\mu,\nu}_{2,3}\add R^{\mu,\nu,\mu}_{2,3}\add R^{\nu,\mu,\mu}_{2,3}\big)\cup \underset{\mnodis} {\bigcup }  R^{\mu,\nu,\omega}_{2,3}\biggr).
\end{eqnarray*}
Therefore
Eq.~(\mref{eq:lastri3}) also holds, for $i=2$. Similarly, expanding the cubic relations $\rc{_{2,3}}$, $\rc{_{3,3}}$ and $\rc{_{4}}$ by linearity, we obtain Eq.~(\mref{eq:lastri2}) for $i=2,3$, and Eq.~\meqref{eq:lastr4}. This completes the proof.
\end{proof}

\subsection{Linear compatibility by the Manin black square product} 
We now establish the relationship between the linear compatibility and the Manin black square products of ns operads.
\begin{defn}	\mlabel{defn:manin}
	 (\mcite{EFG,Val})
	Let $\spp=\stt(E)/\langle R\rangle$ and $\mscr{Q}=\stt(F)/\langle S\rangle$ be two finite generated binary quadratic ns opeards with relations
	\begin{equation*}
		R= ~\biggr\{\sum_{i,j\in E}{\kappa}_{i,j}^{m,1}
		\treey{\cdlr{ol}
			\foreach \i/\j in {ol/$i$,o/$j$} {\node[above] at (\i) {\j};}}
		+{\kappa}_{i,j}^{m,2}\treey{\cdlr{or}
			\foreach \i/\j in {or/$j$,o/$i$} {\node[above] at (\i) {\j};}}
		\quad \Big| \,{1\leq m\leq r}\biggr\}
	\end{equation*}
	and
	\begin{equation*}
		S= ~\biggr\{\sum_{k,\ell\in F}{\kappa'}_{k,\ell}^{n,1}
		\treey{\cdlr{ol}
			\foreach \i/\j in {ol/$k$,o/$\ell$} {\node[above] at (\i) {\j};}}
		+{\kappa'}_{k,\ell}^{n,2}\treey{\cdlr{or}
			\foreach \i/\j in {or/$\ell$,o/$k$} {\node[above] at (\i) {\j};}}
		\quad \Big| \,{1\leq n\leq s}\biggr\}.
	\end{equation*}
	\begin{enumerate}
		\item 		\mlabel{item:maninb}
		The {\bf Manin black product} of $\spp$ and $\mscr{Q}$ is the operad
		$
			\spp \blacksquare \mscr{Q}:=\stt(E\otimes F)/\langle R\blacksquare S\rangle,
		$
		where
		\begin{equation}
			R\blacksquare S:= ~\biggr\{\sum_{i,j\in E}\sum_{k,\ell\in F}{\kappa}_{i,j}^{m,1}{\kappa'}_{k,\ell}^{n,1}
			\treey{\cdlr{ol}\node  at (-0.6,0.1) {$i\otimes k$};\node  at (0.4,0) {$j\otimes \ell$};}
			-{\kappa}_{i,j}^{m,2}{\kappa'}_{k,\ell}^{n,2}\treey{\cdlr{or}
				\node  at (0.6,0.1) {$j\otimes \ell$};\node  at (-0.4,-0.1) {$i\otimes k$};}
			\quad \Big| \,{1\leq m\leq r},\,{1\leq n\leq s}\biggr\}.
\mlabel{eq:maninb}
		\end{equation}
		\item		\mlabel{item:maninw}
		The {\bf Manin white product} of $\spp$ and $\mscr{Q}$ is the operad
		$
			\spp \square \mscr{Q}:=\stt(E\otimes F)/\langle R\square S\rangle,
			\mlabel{eq:maninw}
		$
		where
		\begin{align*}
			R\square S:=&~\biggr\{\sum_{i,j\in E}\sum_{k,\ell\in F}{\kappa}_{i,j}^{m,1}
			\treey{\cdlr{ol}\node  at (-0.6,0.1) {$i\otimes k$};\node  at (0.4,0) {$j\otimes \ell$};}
			-{\kappa}_{i,j}^{m,2}\treey{\cdlr{or}
				\node  at (0.6,0.1) {$j\otimes \ell$};\node  at (-0.4,-0.1) {$i\otimes k$};}
			\quad \Big| \,{1\leq m\leq r}\biggr\}\\
			&\cup\biggr\{\sum_{i,j\in E}\sum_{k,\ell\in F}{\kappa'}_{k,\ell}^{n,1}
			\treey{\cdlr{ol}\node  at (-0.6,0.1) {$i\otimes k$};\node  at (0.4,0) {$j\otimes \ell$};}
			-{\kappa'}_{k,\ell}^{n,2}\treey{\cdlr{or}
				\node  at (0.6,0.1) {$j\otimes \ell$};\node  at (-0.4,-0.1) {$i\otimes k$};}
			\quad \Big| \,{1\leq n\leq s}\biggr\}.
		\end{align*}
	\end{enumerate}
\end{defn}

The following result shows that linear compatibility can be achieved by taking a black square product.
\begin{prop}
\mlabel{prop:maninbl}
Let $\Omega$ be a nonempty finite set. Let $\spp$ be a finitely generated binary quadratic ns operad.
Then
$$\lin{\spp}_\Omega\cong \lin{\as}_\Omega\blacksquare\spp. $$
\end{prop}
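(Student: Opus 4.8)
The plan is to produce an explicit isomorphism of arity-graded vector spaces on the generating spaces and then to verify that, under it, the defining ideals of the two operads coincide. First some reductions. Since $\spp=\stt(E)/\langle R\rangle$ is binary quadratic, its generating space $E$ is concentrated in arity $2$ and its relations $R$, written in the form of Definition~\ref{defn:manin}, are $r^m=\sum_{i,j}\kappa^{m,1}_{i,j}(\text{left comb})_{i,j}+\kappa^{m,2}_{i,j}(\text{right comb})_{i,j}$ for $1\le m\le r$; these are exactly the relations $r_{3,2}^m$ of~\eqref{eq:pr3}. For a binary quadratic operad all of the relation families $R_{1,2}$, $R_{1,3}$, $R_{2,2}$, $R_{2,3}$, $R_{3,3}$, $R_4$ are empty, so in~\eqref{eq:lin1} the set $\lrr$ reduces to $\bigcup_{\mu\neq\nu\in\Omega}\big(R_{3,2}^{\mu,\nu}\add R_{3,2}^{\nu,\mu}\big)$. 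Applying this to $\as$ (which has one binary generator, $n_{3,2}=1$), we see that $\lin{\as}_\Omega$ is a binary quadratic ns operad whose arity-$2$ generating space is $A:=\bigoplus_{\omega\in\Omega}\bfk\mu_\omega$ — finite-dimensional as $\Omega$ is finite, so that $\lin{\as}_\Omega\blacksquare\spp$ is defined — and, after putting them in the standard form of Definition~\ref{defn:manin}, its defining relations split into two families: (i) for each $\omega\in\Omega$, the associativity of $\mu_\omega$, with data $\sigma^{1}_{\mu_\omega,\mu_\omega}=1=-\sigma^{2}_{\mu_\omega,\mu_\omega}$ and all other entries $0$; and (ii) for each pair $\mu\neq\nu$, the compatibility relation $R_{3,2}^{\mu,\nu}\add R_{3,2}^{\nu,\mu}$, with $\sigma^{1}_{\mu_\mu,\mu_\nu}=\sigma^{1}_{\mu_\nu,\mu_\mu}=1=-\sigma^{2}_{\mu_\mu,\mu_\nu}=-\sigma^{2}_{\mu_\nu,\mu_\mu}$ and all other entries $0$.

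Next I would define $\phi$ on generators by $i_\omega\longmapsto\mu_\omega\otimes i$ (and $\id\mapsto\id$), sending the $\omega$-copy of the binary operation $i\in E$ to $\mu_\omega\otimes i\in A\otimes E$. Because $\{\mu_\omega\otimes i\}_{\omega,i}$ is a basis of $A\otimes E$ while $\{i_\omega\}_{\omega,i}$ is a basis of $\bigoplus_{\omega}E_{\omega,2}$, this is an isomorphism of arity-graded vector spaces, hence induces an isomorphism of free ns operads $\stt(\phi)\colon\stt\big(\bigoplus_{\omega\in\Omega}E_\omega\big)\xrightarrow{\ \sim\ }\stt(A\otimes E)$.

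The heart of the proof is then a direct computation showing that $\stt(\phi)$ carries the generating set $\bigcup_{\omega}R_\omega\cup\lrr$ of relations of $\lin{\spp}_\Omega$ onto a generating set of relations of $\lin{\as}_\Omega\blacksquare\spp$. Indeed, feeding the family-(i) relation for $\mu_\omega$ together with a relation $r^m$ of $\spp$ into the black product formula~\eqref{eq:maninb}, the explicit minus sign there cancels against $\sigma^{2}_{\mu_\omega,\mu_\omega}=-1$, and the resulting element is precisely $\stt(\phi)\big(r_{3,2}^m(i_\omega,j_\omega)\big)$, a generator of $R_\omega$; feeding in the family-(ii) relation for $\mu\neq\nu$ together with $r^m$, the same cancellation produces precisely $\stt(\phi)\big(r_{3,2}^m(i_\mu,j_\nu)+r_{3,2}^m(i_\nu,j_\mu)\big)$, a generator of $R_{3,2}^{\mu,\nu}\add R_{3,2}^{\nu,\mu}\subseteq\lrr$; and conversely every generator of $\bigcup_{\omega}R_\omega\cup\lrr$ is obtained this way. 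Hence $\stt(\phi)$ maps the operadic ideal $\langle\bigcup_{\omega}R_\omega\cup\lrr\rangle$ onto the ideal defining $\lin{\as}_\Omega\blacksquare\spp$, so it descends to the desired isomorphism $\lin{\spp}_\Omega\cong\lin{\as}_\Omega\blacksquare\spp$.

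I expect the only genuine difficulty to be bookkeeping rather than mathematics: one must fix conventions carefully so that in every quadratic arity-$3$ relation the correct slot (inner versus outer vertex, equivalently the left-comb versus the right-comb term) is consistently assigned the $\mu_\omega\otimes i$ rather than the $\mu_\nu\otimes j$ decoration, and one must keep track of how the three signs in play — the $-1$ inside the associativity of $\as$, the $-1$ inside the compatibility relations, and the $-$ in~\eqref{eq:maninb} — combine to reproduce the all-plus shape of~\eqref{eq:pr3}. Once the presentations are fixed, this verification is mechanical.
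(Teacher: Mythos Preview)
Your proposal is correct and follows essentially the same approach as the paper: both arguments identify the generating spaces via $i_\omega\leftrightarrow\mu_\omega\otimes i$ and then verify by direct computation with the black product formula~\eqref{eq:maninb} that the defining relations of $\lin{\as}_\Omega\blacksquare\spp$ correspond exactly to $\bigcup_{\omega}R_\omega\cup\bigcup_{\mu\neq\nu}\big(R_{3,2}^{\mu,\nu}\add R_{3,2}^{\nu,\mu}\big)$. Your explicit tracking of the sign cancellation between the $-1$ in~\eqref{eq:maninb} and the $-1$ in the associativity relation is exactly what the paper's computation does implicitly.
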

\begin{proof}
By Definition~\mref{defn:comp1}, for the associative operad $ \as =\stt(E)/\langle R\ \rangle$ in Example~\mref{ex:nsop}~\mref{exam:0}, we have
\begin{align*}
\lin{\as}_\Omega
=\mscr{T}\Big(\bigoplus\limits_{\omega\in\Omega} E_\omega\Big)\Big/\Big\langle \bigcup\limits_{\omega\in\Omega}R_\omega\cup \lrr\Big\rangle
=\mscr{T}\Big(\bigoplus\limits_{\omega\in\Omega} E_\omega\Big)\Big/\Big\langle \bigcup\limits_{\omega\in\Omega}R_\omega\cup \Big(\underset{\mu\neq\nu\in\Omega} {\bigcup } R^{\mu,\nu}\add R^{\nu,\mu}\Big)\Big\rangle,
\end{align*}
where
$$\bigoplus\limits_{\omega\in\Omega} E_\omega=
\bigoplus\limits_{\omega\in\Omega}\bfk\treey{\cdlr{o}\node  at (0,0.35) {$\omega$}; }$$
and
\begin{equation}
 \bigcup\limits_{\omega\in\Omega}R_\omega\cup \Big(\underset{\mu\neq\nu\in\Omega} {\bigcup } R^{\mu,\nu}\add R^{\nu,\mu}\Big)
 =\bigcup\limits_{\omega\in\Omega}\biggr\{\treey{\cdlr{ol}\foreach \i/\j in {ol/$\omega$,o/$\omega$} {\node [above] at (\i) {\j};}}
 -\treey{\cdlr{or}\foreach \i/\j in {or/$\omega$,o/$\omega$} {\node[above] at (\i) {\j};}}\biggr\}
  \cup \underset{\mu\neq\nu\in\Omega} {\bigcup }\biggr\{
\treey{\cdlr{ol}\foreach \i/\j in {ol/$\mu$,o/$\nu$} {\node [above] at (\i) {\j};}}
+ \treey{\cdlr{ol}\foreach \i/\j in {ol/$\nu$,o/$\mu$} {\node [above] at (\i) {\j};}}
 -\treey{\cdlr{or}\foreach \i/\j in {or/$\mu$,o/$\nu$} {\node[above] at (\i) {\j};}}
 -\treey{\cdlr{or}\foreach \i/\j in {or/$\nu$,o/$\mu$} {\node[above] at (\i) {\j};}}\biggr\}.
 \mlabel{eq:linassr}
\end{equation}
Let $\spp=\stt(F)/\langle S \rangle$  with
\begin{equation*}
	F :=F_2:=\bfk\left\{
\treey{\cdlr{o}\node  at (0,0.25) {$1$};},
\treey{\cdlr{o}\node  at (0,0.25) {$2$};},~... ~,
\treey{\cdlr{o}\node  at (0,0.25) {$s$};}
\right\}, \quad
S:= S_{3,2}:=~\biggr\{\sum_{1\leq i,j\leq s}\ka{i}{j}{1}
\treey{\cdlr{ol}
\foreach \i/\j in {ol/$i$,o/$j$} {\node[above] at (\i) {\j};}}
+\ka{i}{j}{2}\treey{\cdlr{or}
\foreach \i/\j in {or/$j$,o/$i$} {\node[above] at (\i) {\j};}}
\quad \Big| \,{1\leq d\leq s}\biggr\}.
\end{equation*}
By Eq.~\meqref{eq:opcirc}, we obtain
\begin{equation}
\Big(\bigoplus\limits_{\omega\in\Omega} E_\omega\Big) \otimes F
=\Big(\bigoplus\limits_{\omega\in\Omega}\bfk\treey{\cdlr{o}\node  at (0,0.35) {$\omega$}; }\Big)
\otimes\Big(\bigoplus\limits_{1\leq  i \leq s}\bfk\treey{\cdlr{o}\node  at (0,0.35) {$i$}; }\Big)\\
=\bigoplus\limits_{\omega\in\Omega,1\leq  i \leq s}\bfk\treey{\cdlr{o}\node  at (0.55,0) {$\omega\otimes i$}; }\\
\cong \bigoplus\limits_{\omega\in\Omega,1\leq  i \leq s}\bfk\treey{\cdlr{o}\node  at (0.4,0) {$i_\omega$}; }\\
=\bigoplus\limits_{\omega\in\Omega} F_\omega
\mlabel{eq:wqpiso}
\end{equation}
and by Definition~\mref{defn:manin}~\mref{item:maninb},
\begin{eqnarray*}
&&\Big(\bigcup\limits_{\omega\in\Omega}R_\omega\cup\underset{\mu\neq\nu\in\Omega} {\bigcup } R^{\mu,\nu}\add R^{\nu,\mu}\Big)\blacksquare S\\
&=&\bigcup\limits_{\omega\in\omega}\big(R_\omega\blacksquare S\big)\cup\underset{\mu\neq\nu\in\omega} {\bigcup }\Big( \big (R^{\mu,\nu}\add R^{\nu,\mu}\big)\blacksquare S\Big)\\
&=&\bigcup\limits_{\omega\in\Omega}\biggr\{\treey{\cdlr{ol}\foreach \i/\j in {ol/$\omega$,o/$\omega$} {\node [above] at (\i) {\j};}}
 -\treey{\cdlr{or}\foreach \i/\j in {or/$\omega$,o/$\omega$} {\node[above] at (\i) {\j};}}\biggr\}
 \blacksquare \biggr\{\sum_{1\leq i,j\leq s}\ka{i}{j}{1}
\treey{\cdlr{ol}
\foreach \i/\j in {ol/$i$,o/$j$} {\node[above] at (\i) {\j};}}
+\ka{i}{j}{2}\treey{\cdlr{or}
\foreach \i/\j in {or/$j$,o/$i$} {\node[above] at (\i) {\j};}}
\quad \Big| \,{1\leq d\leq s}\biggr\}\\
&& \cup \underset{\mu\neq\nu\in\Omega} {\bigcup }\biggr\{
\treey{\cdlr{ol}\foreach \i/\j in {ol/$\mu$,o/$\nu$} {\node [above] at (\i) {\j};}}
+ \treey{\cdlr{ol}\foreach \i/\j in {ol/$\nu$,o/$\mu$} {\node [above] at (\i) {\j};}}
 -\treey{\cdlr{or}\foreach \i/\j in {or/$\mu$,o/$\nu$} {\node[above] at (\i) {\j};}}
 -\treey{\cdlr{or}\foreach \i/\j in {or/$\nu$,o/$\mu$} {\node[above] at (\i) {\j};}}\biggr\}
 \blacksquare
 \biggr\{\sum_{1\leq i,j\leq s}\ka{i}{j}{1}
\treey{\cdlr{ol}
\foreach \i/\j in {ol/$i$,o/$j$} {\node[above] at (\i) {\j};}}
+\ka{i}{j}{2}\treey{\cdlr{or}
\foreach \i/\j in {or/$j$,o/$i$} {\node[above] at (\i) {\j};}}
\quad \Big| \,{1\leq d\leq s}\biggr\}\ (\text{by Eq.~}\meqref{eq:linassr})\\
&\cong&\bigcup\limits_{\omega\in\Omega}\biggr\{\sum_{1\leq i,j\leq s}\ka{i}{j}{1}
\treey{\cdlr{ol}
\foreach \i/\j in {ol/$i_\omega$,o/$j_\omega$} {\node[above] at (\i) {\j};}}
+\ka{i}{j}{2}\treey{\cdlr{or}
\foreach \i/\j in {or/$j_\omega$,o/$i_\omega$} {\node[above] at (\i) {\j};}}
\quad \Big| \,{1\leq d\leq s}\biggr\}\\
&& \cup \underset{\mu\neq\nu\in\Omega} {\bigcup }\biggr\{\sum_{1\leq i,j\leq s}\Big(\big(\ka{i}{j}{1}
\treey{\cdlr{ol}\foreach \i/\j in {ol/$i_\mu$,o/$j_\nu$} {\node [above] at (\i) {\j};}}+\ka{i}{j}{2}\treey{\cdlr{or}\foreach \i/\j in {or/$j_\nu$,o/$i_\mu$} {\node[above] at (\i) {\j};}}\big)+\big(\ka{i}{j}{1}
\treey{\cdlr{ol}\foreach \i/\j in {ol/$i_\nu$,o/$j_\mu$} {\node [above] at (\i) {\j};}}+\ka{i}{j}{2}\treey{\cdlr{or}\foreach \i/\j in {or/$j_\mu$,o/$i_\nu$} {\node[above] at (\i) {\j};}}\big)\Big)
\quad \Big| {1\leq d\leq s}\biggr\}\\
&& \hspace{3.5cm} \qquad \quad(\text{by Eqs.}~\meqref{eq:maninb} \text{ and }~\meqref{eq:wqpiso})\\
&=& \bigcup\limits_{\omega\in\Omega}S_{\omega,3,2}\cup\underset{\mu\neq\nu\in\Omega} {\bigcup } S_{3,2}^{\mu,\nu}\add S_{3,2}^{\nu,\mu}\quad\quad(\text{by Eqs.}~\meqref{eq:pr3b}\text{ and }~\meqref{eq:spr123}).
\end{eqnarray*}
Hence
\begin{eqnarray*}
 \lin{\as}_\Omega\blacksquare\spp
&=&\stt\Big(\big(\bigoplus\limits_{\omega\in\Omega} E_\omega \big)\otimes F\Big)\Big/\Big\langle \Big(\bigcup\limits_{\omega\in\Omega}R_\omega\cup\underset{\mu\neq\nu\in\Omega} {\bigcup } R^{\mu,\nu}\add R^{\nu,\mu}\Big)\blacksquare S\Big\rangle\\
&\cong&\stt\Big(\bigoplus\limits_{\omega\in\Omega} F_\omega \Big)\Big/\Big\langle \bigcup\limits_{\omega\in\Omega}S_{\omega,3,2}\cup\underset{\mu\neq\nu\in\Omega} {\bigcup } S_{3,2}^{\mu,\nu}\add S_{3,2}^{\nu,\mu}\Big\rangle\\
&=&\lin{\spp}_\Omega,
\end{eqnarray*}
giving the desired equality.
\end{proof}

\section{The matching compatibility and Koszul self duality}\mlabel{sec:matcom}
This section is devoted to the operadic study of another compatibility condition of algebraic structures carrying multiple copies of the same operations. It will be called the matching compatibility, which has stronger conditions than the linear compatibility and has a self dual property.

\subsection{Matching compatibility}\mlabel{sec:mat1}

By splitting Eq.~(\mref{eq:lin1}) according to arities and degrees, we define
\begin{equation}
\mrr:=\underset{i\in\{1,2,3\}} {\bigcup }\Big(\underbrace{\underset{\mu\neq\nu\in\Omega} {\bigcup } R^{\mu,\nu}_{i,2}
\cup\underset{\mu,\nu,\omega\in\Omega \atop \text{  not all equal}} {\bigcup } R^{\mu,\nu,\omega}_{i,3}}_{\text{arity $i$}}\Big)
\cup \underbrace{\underset{\mu,\nu,\omega\in\Omega \atop \text{  not all equal}} {\bigcup } R^{\mu,\nu,\omega}_{4}}_{\text{arity 4}}
\mlabel{eq:mat}
\end{equation}
\begin{defn}
Let $\Omega$ be a nonempty set.
Let $\spp=\mscr{T}(E)/\langle R\rangle $ be a unary binary \qc ns operad given in Eq.~\meqref{eq:ubqcop}.
We call the ns operad
\begin{equation*}
\mat{\spp}_\Omega:=\mscr{T}\Big(\bigoplus\limits_{\omega\in\Omega} E_\omega\Big)\Big/\Big\langle \bigcup\limits_{\omega\in\Omega}R_\omega\cup \mrr\Big\rangle
\end{equation*}
the \textbf{matching operad} of $\mscr{P}$ with parameter set $\Omega$.
\mlabel{defn:mat1}
\end{defn}
By Eqs.~\meqref{eq:recb} and~\meqref{eq:spr123}, we have
$$ \bigcup\limits_{\omega\in\Omega}R_\omega\cup \mrr=\underset{\mu,\nu,\omega\in\Omega\atop i\in\{1,2,3\}} {\bigcup }\Big( R^{\mu,\nu}_{i,2}\cup R^{\mu,\nu,\omega}_{i,3}\cup R^{\mu,\nu,\omega}_4\Big).$$
Thus
$$\mat{\spp}_\Omega=\mscr{T}\Big(\bigoplus\limits_{\omega\in\Omega} E_\omega\Big)\Big/\Big\langle\underset{\mu,\nu,\omega\in\Omega\atop i\in\{1,2,3\}} {\bigcup }\Big( R^{\mu,\nu}_{i,2}\cup R^{\mu,\nu,\omega}_{i,3}\cup R^{\mu,\nu,\omega}_4\Big)\Big\rangle.$$

Since $\langle\lrr\rangle \subseteq \langle \mrr\rangle$, we have

\begin{prop}
Let $\Omega$ be a nonempty set.
Let $\spp$ be a unary binary \qc ns operad given in Eq.~\meqref{eq:ubqcop}.
Then there is an epimorphism of ns operads
$$\lin{\spp}_\Omega \longrightarrow \mat{\spp}_\Omega.$$
In other words, any $\mat{\spp}_\Omega$-algebra is a $\lin{\spp}_\Omega$-algebra.
\mlabel{prop:matlin0}
\end{prop}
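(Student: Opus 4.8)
The plan is to observe that this proposition follows essentially immediately from a set-theoretic inclusion of relation sets together with the fact that an operadic ideal generated by a larger set of relations contains the ideal generated by a smaller set. First I would note that both $\lin{\spp}_\Omega$ and $\mat{\spp}_\Omega$ are quotients of the \emph{same} free ns operad $\stt\big(\bigoplus_{\omega\in\Omega} E_\omega\big)$ — this is clear from Definitions~\mref{defn:comp1} and~\mref{defn:mat1}, since the generator space $\bigoplus_{\omega\in\Omega} E_\omega$ does not depend on which compatibility condition we impose. So it suffices to compare the two operadic ideals by which we quotient.

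Next I would establish the key inclusion $\lrr \subseteq \langle \mrr\rangle$, or more precisely $\langle \lrr\rangle \subseteq \langle \mrr\rangle$, which is exactly the statement flagged just before the proposition. Comparing Eq.~\meqref{eq:lin1} with Eq.~\meqref{eq:mat} term by term: in arity $i\in\{1,2,3\}$, the linear compatibility relations consist of the \emph{sums} $R^{\mu,\nu}_{i,2}\add R^{\nu,\mu}_{i,2}$ (for $\mu\neq\nu$), the sums $R^{\mu,\mu,\nu}_{i,3}\add R^{\mu,\nu,\mu}_{i,3}\add R^{\nu,\mu,\mu}_{i,3}$ (for $\mu\neq\nu$), and the relations $R^{\mu,\nu,\omega}_{i,3}$ for $\mu,\nu,\omega$ pairwise distinct; by contrast the matching relations contain \emph{each individual} $R^{\mu,\nu}_{i,2}$ (for $\mu\neq\nu$), and each $R^{\mu,\nu,\omega}_{i,3}$ for $\mu,\nu,\omega$ not all equal (which includes both the pairwise-distinct case and the cases where exactly two agree). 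Thus every generator of $\langle\lrr\rangle$ is a $\bfk$-linear combination of generators of $\langle\mrr\rangle$: e.g. $r^n_{i,2}(P_{\mu,k},P_{\nu,\ell}) + r^n_{i,2}(P_{\nu,k},P_{\mu,\ell}) \in \langle R^{\mu,\nu}_{i,2}\cup R^{\nu,\mu}_{i,2}\rangle \subseteq \langle \mrr\rangle$, and similarly for the cubic terms. The same comparison in arity $4$ is identical in form. Therefore $\langle\lrr\rangle\subseteq\langle\mrr\rangle$, and consequently
$$\Big\langle \bigcup_{\omega\in\Omega} R_\omega \cup \lrr\Big\rangle \subseteq \Big\langle \bigcup_{\omega\in\Omega} R_\omega \cup \mrr\Big\rangle.$$

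From this inclusion of ideals, the canonical surjection $\stt\big(\bigoplus_\omega E_\omega\big) \twoheadrightarrow \mat{\spp}_\Omega$ kills the smaller ideal, hence factors through $\lin{\spp}_\Omega = \stt\big(\bigoplus_\omega E_\omega\big)/\langle\bigcup_\omega R_\omega \cup \lrr\rangle$, yielding the desired epimorphism $\lin{\spp}_\Omega \to \mat{\spp}_\Omega$ of ns operads (it is visibly surjective since it is induced by the identity on generators). The final sentence of the statement, that every $\mat{\spp}_\Omega$-algebra is a $\lin{\spp}_\Omega$-algebra, follows by precomposing the structure morphism $\mat{\spp}_\Omega \to \mathrm{End}_V$ with this epimorphism, using Definition~\mref{def:palg}. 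I do not anticipate a genuine obstacle here: the only thing requiring care is the bookkeeping in matching up the arity-graded pieces of \meqref{eq:lin1} and \meqref{eq:mat} and checking that the ``not all equal'' condition in $\mrr$ genuinely subsumes both the ``pairwise distinct'' families and the ``two-equal'' families appearing (in summed form) in $\lrr$ — which is the mild subtlety the authors signal with the sentence ``Since $\langle\lrr\rangle \subseteq \langle \mrr\rangle$''.
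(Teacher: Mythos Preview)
Your proposal is correct and follows exactly the paper's approach: the paper simply records the inclusion $\langle\lrr\rangle \subseteq \langle\mrr\rangle$ and states the proposition as an immediate consequence, while you spell out the term-by-term comparison of Eqs.~\meqref{eq:lin1} and~\meqref{eq:mat} that justifies this inclusion and the standard factorization argument. There is nothing to add.
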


We give some examples on the level of algebras.
\begin{exam} 	\mlabel{ex:rbmat}
Let $\Omega$ be a finite set.
\begin{enumerate}
\item
\mlabel{ex:rbmatb}
When $\spp$ is the operad of associative algebras, $\mat{\spp}_\Omega$ gives the matching associative algebra~\mcite{ZBG,Zi}, defined to be a vector space $A$ equipped with a family of
	binary operations $\cdot_\omega: A\otimes A\to A,$ satisfying the matching associativity
\begin{equation}
\mlabel{eq:mass}
	(x\cdot_\alpha y)\cdot_\beta z=x\cdot_\alpha( y\cdot_\beta z),\,\text{ for all }\, x,y,z\in A, \alpha,\beta\in\Omega.
\end{equation}
\item  When $\spp$ is the operad of Rota-Baxter algebras of weight zero, $\mat{\spp}_\Omega$ gives the following algebraic structure with multiple copies $\cdot_\alpha$ of the multiplication and multiple copies $P_\alpha$ of the Rota-Baxter operator, satisfying Eq.~\meqref{eq:mass} and
$$P_\alpha(x)\cdot_\gamma P_\beta(y)=P_\alpha(x\cdot_\gamma P_\beta(y))+P_\beta(P_\alpha(x)\cdot_\gamma y)\,\text{ for all }\, x,y\in A, $$
where $\alpha,\beta,\gamma\in\Omega$ with exactly two of which to be the same.
\mlabel{ex:rbmatc}
\item
In the above example, if the multiplications $\cdot_\alpha$ are taken to be the same, we recover the notion of matching Rota-Baxter algebras introduced in~\mcite{ZGG20}.
		\mlabel{ex:rbmata}
\item
\mlabel{it:dend}
Taking $\spp$ to be the operad $\dend$ of dendriform algebras. A $\mat{\dend}_\Omega$-algebra is a matching dendriform algebra $(D, \{\prec_\alpha, \succ_\beta\,|\,\alpha,\beta\in\Omega\})$, characterized by the relations
\begin{align*}
	(x\prec_\alpha y)\prec_\beta z&=x\prec_\alpha(y\prec_\beta z)+x\prec_\alpha(y\succ_\beta z),\\
	(x\succ_\alpha y)\prec_\beta z&=x\succ_\alpha(y\prec_\beta z),\\
	(x\prec_\alpha y)\succ_\beta z+(x\succ_\alpha y)\succ_\beta z&=x\succ_\alpha(y\succ_\beta z), \ \text{ for all } x, y, z\in D.
\end{align*}
Then it is immediately checked that $(D, \{\ast_\alpha:=\prec_\alpha+\succ_\alpha\,|\,\alpha\in\Omega\})$ is a matching associative algebra in Item~\mref{ex:rbmatb}.
This relationship of matching compatibility with the splitting of operads~\mcite{BBGN,GK,PBGN} holds for any binary quadratic ns operad.
We note that this notion of matching dendriform algebra is different from the one defined in~\mcite{ZGG20}. For example, instead of the first equation, the notion in~\mcite{ZGG20} has the equation
$$(x\prec_\alpha y)\prec_\beta z=x\prec_\alpha(y\prec_\beta z)+x\prec_\beta(y\succ_\alpha z).$$
	\end{enumerate}
\end{exam}

\subsection{The Koszul dual of unary binary quadratic operads}\mlabel{sec:kosdual}
The Koszul duality for binary operads was given by Ginzburg and Kapranov~\mcite{GK94} for binary quadratic operads and by Getzler~\mcite{Get95} for binary operads. See also~\mcite{Fre, LV}.
\begin{defn}
	\begin{enumerate}
	\mlabel{defn:kos}
		\item  \mlabel{item:quaop}
	An {\bf quadratic cooperad}  $\mathscr{C}(E,R)$ associated to the cogenerators $E$ and the corelations $R$ is a sub-cooperad of the cofree cooperad
		$\mathscr{T}^n(E)$ such that the composite
		$$\mathscr{C}(E,R)\rightarrowtail \mathscr{T}^n(E) \twoheadrightarrow \mathscr{T}^n(E)^{(2)}/R$$
		is 0.
		\item  	
		Let $\mathscr{P}=\mathscr{T}(E)/\langle R\rangle$ be a quadratic operad. 	Let $s$ be the suspension.
		Define the {\bf Koszul dual cooperad } of $\spp$ to be the quadratic cooperad
		$$\mathscr{P}^\text{!`}:= \mathscr{C}(sE,s^2R).$$
		\item
		Let $\mathscr{P}=\mathscr{T}(E)/\langle R\rangle$ be a quadratic operad.  Let $\mathscr{S}^n$ be the cooperad defined by  $\mathscr{S}^c_n:=\text{Hom}\big((s\bfk)^n ,s\bfk\big)$. 
		 Define
		the {\bf Koszul dual operad} of $\spp$ to be the linear dual
		$$\mathscr{P}^!:= \Big(\mathscr{S}^c\underset{ {\rm H} }{\otimes}\mathscr{P}^\text{!`} \Big)^*.$$
	\end{enumerate}
\end{defn}

For the operad of differential algebras, the Koszul dual was explicitly computed by Loday in~\mcite{Lod10}. Generalizing the approach, we obtain

\begin{theorem}
	For any finitely generated unary binary quadratic ns operad $\spp = \mathscr{T}(E)/\langle R\rangle $ with
$$R=R_1\sqcup R_{2}\sqcup R_3:=R_{1,2}\sqcup R_{2,2}\sqcup R_{3,2},$$
  its Koszul dual operad is given
	by
	\begin{equation*}
		\spp^!= \mathscr{T}(E^\ast)/\langle R^\perp\rangle,
	\end{equation*}
	where $R^\perp:=\big\{R_1^\perp, R_{2}^\perp, R_3^\perp\big\}$, while
	$R_1^\perp$, $R_{2}^\perp$ and $R_3^\perp$ are respectively the orthogonal subspaces of $R_1, R_{2}$ and $R_3$ in $E_1\otimes E_1$, $E_1\otimes E_2\oplus E_2\otimes E_1\oplus E_2\otimes E_1$ and $E_2\otimes E_2\oplus E_2\otimes E_2$.
	\mlabel{lem:koszul}
\end{theorem}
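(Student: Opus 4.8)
The plan is to compute the Koszul dual operad directly from the definition of Koszul duality for quadratic operads, following Loday's treatment of differential algebras in \mcite{Lod10} and specializing the general machinery of \mcite{LV}. Since $\spp = \mathscr{T}(E)/\langle R\rangle$ is a finitely generated quadratic operad with $E=\{0,E_1,E_2,0,\ldots\}$, the weight-two part $\mathscr{T}(E)^{(2)}$ decomposes by arity as
\begin{equation*}
\mathscr{T}(E)^{(2)} = \underbrace{E_1\otimes E_1}_{\text{arity }1} \ \oplus\ \underbrace{\big(E_1\otimes E_2\big)\oplus \big(E_2\otimes_{\mathrm{I}} E_1\big)\oplus\big(E_2\otimes_{\mathrm{II}} E_1\big)}_{\text{arity }2}\ \oplus\ \underbrace{\big(E_2\otimes_{\mathrm{I}} E_2\big)\oplus\big(E_2\otimes_{\mathrm{II}} E_2\big)}_{\text{arity }3},
\end{equation*}
where the two summands in each of arity $2$ and arity $3$ correspond to grafting the second operation onto the left versus the right input of the first (these are the two weight-two planar trees with a binary operation at the root and a binary operation above it, one at each leaf). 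The relation space $R=R_{1,2}\sqcup R_{2,2}\sqcup R_{3,2}$ respects this arity grading: $R_{1,2}\subseteq E_1\otimes E_1$, $R_{2,2}$ lies in the arity-$2$ summand, and $R_{3,2}$ lies in the arity-$3$ summand. First I would make this decomposition precise, including a careful account of the pairing between $\mathscr{T}(E)^{(2)}$ and $\mathscr{T}(E^\ast)^{(2)}$ induced arity-component by arity-component from the canonical pairing $E_n\otimes E_n^\ast\to\bfk$, together with the sign twist coming from the suspension $s$ and the cooperad $\mathscr{S}^c$ in Definition~\mref{defn:kos}.

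Next I would invoke the standard fact (see \mcite{LV}, and as used by Loday in \mcite{Lod10}) that for a quadratic operad the Koszul dual operad is again quadratic, generated by $E^\ast$ with relations the annihilator $R^\perp$ of $R$ inside $\mathscr{T}(E^\ast)^{(2)}$ under this pairing; the only subtlety is to track the Koszul sign rule through the definition $\spp^! = (\mathscr{S}^c\otimes_{\mathrm{H}}\spp^{\text{!`}})^\ast$ with $\spp^{\text{!`}}=\mathscr{C}(sE,s^2R)$. Because the pairing is block-diagonal with respect to the arity decomposition above, the orthogonal complement splits as a direct sum of the orthogonal complements in each block: $R^\perp = R_{1,2}^\perp \oplus R_{2,2}^\perp \oplus R_{3,2}^\perp$, where $R_{1,2}^\perp$ is the annihilator of $R_{1,2}$ in $E_1^\ast\otimes E_1^\ast$, and similarly for the other two blocks relative to $E_1^\ast\otimes E_2^\ast\oplus E_2^\ast\otimes E_1^\ast\oplus E_2^\ast\otimes E_1^\ast$ and $E_2^\ast\otimes E_2^\ast\oplus E_2^\ast\otimes E_2^\ast$. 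Identifying $E^\ast$ with the dual arity graded space whose generators are the $P_k^\ast$ and the $i^\ast$ then yields exactly the presentation $\spp^! = \mathscr{T}(E^\ast)/\langle R^\perp\rangle$ with $R^\perp = \{R_1^\perp,R_2^\perp,R_3^\perp\}$ as claimed.

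The main obstacle, and the step I would spend the most care on, is bookkeeping the suspension and sign conventions so that the orthogonality is stated with respect to the correct (untwisted) pairing on $E_1\otimes E_1$, $E_1\otimes E_2\oplus E_2\otimes E_1\oplus E_2\otimes E_1$, and $E_2\otimes E_2\oplus E_2\otimes E_2$ rather than some sign-modified version; in the nonsymmetric setting the signs are mild but the arity-$1$ block (with two unary operations) and the arity-$2$/$3$ blocks (with mixed unary-binary grafts) behave slightly differently under the degree shift, which is precisely the phenomenon Loday had to handle for $\dera$. A secondary point requiring attention is checking that the two distinct weight-two trees in each of arities $2$ and $3$ are correctly matched up under dualization — i.e., that the ``graft on the left leaf'' tree for $\spp$ pairs with the ``graft on the left leaf'' tree for $\spp^!$ — so that the combinatorial labelling of $E_2^\ast\otimes E_2^\ast\oplus E_2^\ast\otimes E_2^\ast$ is the one written in the statement. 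Once the pairing is pinned down, the conclusion is a formal consequence of the arity-wise block decomposition of $\mathscr{T}(E)^{(2)}$ and the elementary fact that the annihilator of a direct sum of subspaces in a direct sum of pairing-orthogonal blocks is the direct sum of the annihilators.
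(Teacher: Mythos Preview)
Your approach is essentially the same as the paper's: decompose $\mathscr{T}(E)^{(2)}$ by arity, note that $R$ respects this grading, identify $\mathscr{S}^c\otimes_{\mathrm H}\spp^{\text{!`}}$ with the quadratic cooperad $\mathscr{C}(E,R)$, and take linear duals to obtain $\spp(E^\ast,R^\perp)$ with $R^\perp$ splitting block-by-block. The one place where your expectation differs from the paper is the pairing: you anticipate that the orthogonality should be with respect to the \emph{untwisted} pairing, but in fact the paper's pairing (Eq.~\meqref{eq:midiso}) is sign-modified, with a minus sign on the $\alpha^\ast\circ_1\mu^\ast$ term in arity $2$ and on the $\mu^\ast\circ_2\nu^\ast$ term in arity $3$; these signs are exactly what the suspension and $\mathscr{S}^c$ contribute, and they are what make the example computations (e.g.\ Proposition~\mref{prop:kdualdda}) come out right. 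You correctly flagged this as the step requiring the most care, so this is a refinement rather than a gap.
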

We give some notations before the proof. Since $\spp$ is a unary binary quadratic ns operad, the generator is $E=\{0,E_1, E_2,\ldots\}$ and
\begin{eqnarray*}
	&&\stt(E)^{(2)}\\
	&=&\Big\{\stt(E)^{(2)}_1, \stt(E)^{(2)}_2, \stt(E)^{(2)}_3\Big\}\\
	&=&\Big\{\bfk\big\{\alpha\circ_1\beta\,|\,\alpha,\beta\in E_1\} ,
	\bfk\big\{\alpha\circ_1\mu,\mu\circ_1\alpha,\mu\circ_2\alpha\,|\,\alpha\in E_1, \mu\in E_2\},
	\bfk\big\{\mu\circ_1\nu,\mu\circ_2\nu\,|\, \mu,\nu\in E_2\}\Big\}\\
	&=&\Big\{E_1\otimes E_1, E_1\otimes E_2\oplus E_2\otimes E_1\oplus E_2\otimes E_1, E_2\otimes E_2\oplus E_2\otimes E_2\Big\}.
\end{eqnarray*}
Thus
$$\Big(\stt(E)^{(2)}\Big)^\ast=\Big\{\Big(\stt(E)^{(2)}_1\Big)^\ast, \Big(\stt(E)^{(2)}_2\Big)^\ast, \Big(\stt(E)^{(2)}_3\Big)^\ast\Big\}.$$
We identify $\stt(E^\ast)^{(2)}$ with the dual of $\stt(E)^{(2)}$ by means of the
nondegenerate bilinear form
\begin{equation}
\begin{aligned}
\pair{-,-}_i: \stt(E^\ast)^{(2)}_i&\otimes \stt(E)^{(2)}_i\longrightarrow \bfk,\,\text{ for } i=1,2,3,\\
\pair{\alpha^\ast\circ_1\beta^\ast, \alpha\circ_1\beta}_1&:=\alpha^\ast(\alpha)\beta^\ast(\beta)\in\bfk,\quad
\pair{\alpha^\ast\circ_1\mu^\ast, \alpha\circ_1\mu}_2:=-\alpha^\ast(\alpha)\mu^\ast(\mu)\in\bfk, \\
\pair{\mu^\ast\circ_1\alpha^\ast, \mu\circ_1\alpha}_2&:=\mu^\ast(\mu)\alpha^\ast(\alpha)\in\bfk, \quad
\pair{\mu^\ast\circ_2\alpha^\ast, \mu\circ_2\alpha}_2:=\mu^\ast(\mu)\alpha^\ast(\alpha)\in\bfk,\\
\pair{\mu^\ast\circ_1\nu^\ast, \mu\circ_1\nu}_3&:=\mu^\ast(\mu)\nu^\ast(\nu)\in\bfk,\quad
\pair{\mu^\ast\circ_2\nu^\ast, \mu\circ_2\nu}_3:=-\mu^\ast(\mu)\nu^\ast(\nu)\in\bfk,\\
\pair{-,-}_i&:=0,\,\text{ otherwise },\, \text{ where }\, \alpha,\beta\in E_1, \mu,\nu\in E_2.
\end{aligned}
\mlabel{eq:midiso}
\end{equation}
Then, for $i=1,2,3$, the orthogonal subspace $R_i^\perp\subseteq\stt(E^\ast)^{(2)}_i$ is
\begin{equation}
	R_i^\perp=\Big\{x\in\stt(E^\ast)^{(2)}_i \,\Big|\, \pair{x,R_i}_i=0\Big\}.
	\mlabel{eq:rperp}
\end{equation}
\begin{proof} {\em (of Theorem~\mref{lem:koszul})}
	Since $\mathscr{S}^c_n=\text{Hom}\big((s\bfk)^n ,s\bfk\big)$, the cogenerators of $\mathscr{S}^c$ in arity 1 and 2 are $\id$ and $s^{-1}$, respectively.
	Thus the cogenerators of the cooperad $\mathscr{S}^c\underset{ {\rm H} }{\otimes}\mathscr{P}^\text{!`}$
	is $E$ with degree 0. Notice that $R$ is an arity graded subspace of $\stt(E)^{(2)}$ with
	\begin{align*}
		\stt(E)^{(2)}_1 &= E_1\otimes E_1,
\\
		\stt(E)^{(2)}_2 &= E_1\otimes E_2\oplus E_2\otimes E_1\oplus E_2\otimes E_1,
\\
		\stt(E)^{(2)}_3 &= E_2\otimes E_2\oplus E_2\otimes E_2.
	\end{align*}
From Definition~\mref{defn:kos}, we have
	$$
		\mathscr{P}^\text{!`}= \mathscr{C}(sE,s^2R)\,\text{ and }\,
		\mathscr{S}^c\underset{ {\rm H} }{\otimes}\mathscr{P}^\text{!`}= \mathscr{C}\big(\big\{E_1,E_2\big\},R\big).$$
	For the cooperad $\mathscr{C}\big(\big\{E_1,E_2\big\},R\big)$, the associated linear dual operad is the quadratic operad $\spp(E^\ast, R^\perp)$, where $R^\perp$ is obtained as follows:
	$$\xymatrix{
		{\rm Ker}(\pi) \ar[d]_{\cong} \ar[r]^{i} & \big(\stt(E)^{(2)}\big)^\ast \ar[d]_{\phi\,\cong} \ar[r]^{\quad\pi} & (R)^\ast  \\
		R^\perp \ar[r]^{} & \stt(E^\ast)^{(2)}.    }
	$$
	Here the isomorphism $\phi$ is induced by  the scalar product in Eq.~\meqref{eq:midiso}.
\end{proof}
As an example, we compute the Koszul dual of the operad of $\Delta$-differential algebras in Example~\mref{ex:nsop}~\mref{item:ddas}. Taking $\Delta$ to be a singleton gives~\cite[Proposition~7.2]{Lod10}.
\begin{prop}\mlabel{prop:kdualdda}
Let $\Delta$ be a nonempty finite set. Then the Koszul dual operad of the operad of $\Delta$-differential algebra is presented by generators
$$E_1^\ast = ~\biggr\{\bfk\treeyy[scale=0.8]{\cdu{o} \node  at (0,0) [scale=0.6]{$\bullet$};\node at (0.3,0) {$\delta^\ast$};}\,\big|\,
 \delta\in\Delta\biggr\},\quad
	E_2^\ast = \bfk\treey{\cdlr{o}\node at  (0.1,0.25)  {$\mu^\ast$}; },$$
and relations
\begin{align*}
R_1^\perp=\left\{\left .
\treeyy{\cdu{o} \node  at (0,-0.2) [scale=0.6]{$\bullet$};\node at (0.3,-0.2) {$\delta_1^\ast$};
		\node  at (0,0.2) [scale=0.6]{$\bullet$};\node at (0.3,0.2) {$\delta_2^\ast$};}
+\treeyy{\cdu{o} \node  at (0,-0.2) [scale=0.6]{$\bullet$};\node at (0.3,-0.2) {$\delta_2^\ast$};
		\node  at (0,0.2) [scale=0.6]{$\bullet$};\node at (0.3,0.2) {$\delta_1^\ast$};}
\,\right |\,\delta_1,\delta_2\in\Delta\right\},\quad
~R_2^\perp=\left\{\left .
	\treey{\cdlr[0.8]{o} \cdl{ol}\cdr{or}\zhds{o/b}
		\node at (0.3,-0.2) {$\delta^\ast$};\node at  (0.1,0.25)  {$\mu^\ast$};}
	-\treey{\cdlr[0.8]{o} \cdl{ol}\cdr{or}
		\node at (ol) [scale=0.6]{$\bullet$};\node at (-0.5,0.1) {$\delta^\ast$};\node at  (0.1,0.25)  {$\mu^\ast$};},\quad
	\treey{\cdlr[0.8]{o} \cdl{ol}\cdr{or}\zhds{o/b}
		\node at (0.3,-0.2) {$\delta^\ast$};\node at  (0.1,0.25)  {$\mu^\ast$};}
	-\treey{\cdlr[0.8]{o} \cdl{ol}\cdr{or}
		\node at (or) [scale=0.6]{$\bullet$};\node at (0.5,0.1) {$\delta^\ast$};\node at  (0.1,0.25)  {$\mu^\ast$};}\,\right|\,
 \delta\in\Delta\right\},
\end{align*}
$$~R_3^\perp=\left\{\treey{\cdlr{ol}\node at  (0.25,-0.15)  {$\mu^\ast$};\node at  (-0.25,0.5)  {$\mu^\ast$};}-\treey{\cdlr{or}\node at  (0.25,-0.15)  {$\mu^\ast$};\node at  (0.4,0.6)  {$\mu^\ast$};}\right\}.
$$
\end{prop}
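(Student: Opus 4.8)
The plan is to invoke Theorem~\mref{lem:koszul} and then compute three orthogonal complements. Since $\Delta$ is finite, the operad $\spp$ of $\Delta$-differential algebras in Example~\mref{ex:nsop}~\mref{item:ddas} is a finitely generated unary binary \emph{quadratic} ns operad, with $R = R_1 \sqcup R_2 \sqcup R_3 = R_{1,2}\sqcup R_{2,2}\sqcup R_{3,2}$, so Theorem~\mref{lem:koszul} applies and gives $\spp^! = \stt(E^\ast)/\langle R^\perp\rangle$ with $R^\perp = \{R_1^\perp,R_2^\perp,R_3^\perp\}$, where $R_i^\perp\subseteq\stt(E^\ast)^{(2)}_i$ is the orthogonal subspace of $R_i$ with respect to the form~\meqref{eq:midiso}, as in~\meqref{eq:rperp}. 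Everything thus reduces to identifying $R_i$ inside $\stt(E)^{(2)}_i$ for $i=1,2,3$ and dualizing; each of these is elementary linear algebra.

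First I would treat arity $1$: $\stt(E)^{(2)}_1 = E_1\otimes E_1$ has basis $\{\delta_1\circ_1\delta_2\mid\delta_1,\delta_2\in\Delta\}$ and $R_1 = R_{1,2}$ is the antisymmetric part, spanned by the commutators $\delta_1\circ_1\delta_2-\delta_2\circ_1\delta_1$. Since $\pair{\alpha^\ast\circ_1\beta^\ast,\alpha\circ_1\beta}_1 = \alpha^\ast(\alpha)\beta^\ast(\beta)$ is just the canonical pairing of $E_1^\ast\otimes E_1^\ast$ with $E_1\otimes E_1$, the complement $R_1^\perp$ is the symmetric part, spanned by $\delta_1^\ast\circ_1\delta_2^\ast+\delta_2^\ast\circ_1\delta_1^\ast$, which is the first displayed family. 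For arity $2$, $\stt(E)^{(2)}_2$ has basis $\{\delta\circ_1\mu,\ \mu\circ_1\delta,\ \mu\circ_2\delta\mid\delta\in\Delta\}$ and $R_2 = R_{2,2}$ is spanned by the Leibniz elements $r_\delta := \delta\circ_1\mu-\mu\circ_1\delta-\mu\circ_2\delta$. Pairing a generic $x = \sum_\delta\big(a_\delta\,\delta^\ast\circ_1\mu^\ast+b_\delta\,\mu^\ast\circ_1\delta^\ast+c_\delta\,\mu^\ast\circ_2\delta^\ast\big)$ against $r_{\delta_0}$, only three matched terms survive in~\meqref{eq:midiso} --- the $\delta^\ast\circ_1\mu^\ast$ term contributing a sign $-1$ and the other two a sign $+1$ --- so $\pair{x,r_{\delta_0}}_2 = -(a_{\delta_0}+b_{\delta_0}+c_{\delta_0})$; hence $R_2^\perp$ is cut out by $a_\delta+b_\delta+c_\delta = 0$ for all $\delta$, with basis $\{\delta^\ast\circ_1\mu^\ast-\mu^\ast\circ_1\delta^\ast,\ \delta^\ast\circ_1\mu^\ast-\mu^\ast\circ_2\delta^\ast\mid\delta\in\Delta\}$, the displayed $R_2^\perp$. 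Finally, for arity $3$, $\stt(E)^{(2)}_3$ has basis $\{\mu\circ_1\mu,\ \mu\circ_2\mu\}$ and $R_3 = R_{3,2} = \bfk(\mu\circ_1\mu-\mu\circ_2\mu)$; pairing $a\,\mu^\ast\circ_1\mu^\ast+b\,\mu^\ast\circ_2\mu^\ast$ against this generator gives $a+b$ (using $\pair{\mu^\ast\circ_1\mu^\ast,\mu\circ_1\mu}_3 = 1$ and $\pair{\mu^\ast\circ_2\mu^\ast,\mu\circ_2\mu}_3 = -1$), so $R_3^\perp = \bfk(\mu^\ast\circ_1\mu^\ast-\mu^\ast\circ_2\mu^\ast)$, i.e.\ associativity for $\mu^\ast$, which is the last display. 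Assembling these yields the asserted presentation.

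I do not expect a substantive obstacle: this is a direct application of Theorem~\mref{lem:koszul}, and the only real care needed will be bookkeeping --- keeping track of the two minus signs in~\meqref{eq:midiso} (on $\pair{\alpha^\ast\circ_1\mu^\ast,\alpha\circ_1\mu}_2$ and on $\pair{\mu^\ast\circ_2\nu^\ast,\mu\circ_2\nu}_3$) and translating between the planar-tree pictures in the statement and the partial-composition notation (a unary operation grafted below $\mu$ is $\delta\circ_1\mu$, while grafted onto the left or right input of $\mu$ it is $\mu\circ_1\delta$ or $\mu\circ_2\delta$, respectively). As a sanity check, specializing to $|\Delta| = 1$ should reproduce~\cite[Proposition~7.2]{Lod10}.
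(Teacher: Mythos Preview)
Your proposal is correct and follows essentially the same approach as the paper: invoke Theorem~\mref{lem:koszul} and compute the three orthogonal complements $R_i^\perp$ with respect to the bilinear form~\meqref{eq:midiso}. The only organizational difference is that the paper first verifies that each listed element pairs to zero with the generators of $R_i$ and then concludes by a dimension count ($\dim R_i^\perp + \dim \bfk R_i = \dim \stt(E)^{(2)}_i$), whereas you directly solve the linear conditions $\pair{x,R_i}_i=0$ for a generic $x$; these are two presentations of the same elementary computation.
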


\begin{proof}
	By Example~\mref{ex:nsop}~\mref{item:ddas}, Theorem~\mref{lem:koszul} and Eq.~\meqref{eq:rperp},
	we have
	\begin{align*}
\Big\langle\treeyy{\cdu{o} \node  at (0,-0.2) [scale=0.6]{$\bullet$};\node at (0.3,-0.2) {$\delta_1^\ast$};
		\node  at (0,0.2) [scale=0.6]{$\bullet$};\node at (0.3,0.2) {$\delta_2^\ast$};}
+\treeyy{\cdu{o} \node  at (0,-0.2) [scale=0.6]{$\bullet$};\node at (0.3,-0.2) {$\delta_2^\ast$};
		\node  at (0,0.2) [scale=0.6]{$\bullet$};\node at (0.3,0.2) {$\delta_1^\ast$};},
\treeyy{\cdu{o} \node  at (0,-0.2) [scale=0.6]{$\bullet$};\node at (0.3,-0.2) {$\delta_1$};
		\node  at (0,0.2) [scale=0.6]{$\bullet$};\node at (0.3,0.2) {$\delta_2$};}
-\treeyy{\cdu{o} \node  at (0,-0.2) [scale=0.6]{$\bullet$};\node at (0.3,-0.2) {$\delta_2$};
		\node  at (0,0.2) [scale=0.6]{$\bullet$};\node at (0.3,0.2) {$\delta_1$};}
\Big\rangle_1=\pair{\delta_1^\ast\circ_1\delta_2^\ast, \delta_1\circ_1\delta_2}_1-\pair{\delta_2^\ast\circ_1\delta_1^\ast,\delta_2\circ_1\delta_1}_1
=\delta_1^\ast(\delta_1)\delta_2^\ast(\delta_2)-\delta_2^\ast(\delta_2)\delta_1^\ast(\delta_1)=0,
	\end{align*}
and	
\begin{align*}
		\Bigg\langle
		\treey{\cdlr[0.8]{o} \cdl{ol}\cdr{or}\zhds{o/b}
			\node at (0.3,-0.2) {$\delta^\ast$};\node at  (0.1,0.25)  {$\mu^\ast$};}
		-\treey{\cdlr[0.8]{o} \cdl{ol}\cdr{or}
			\node at (ol) [scale=0.6]{$\bullet$};\node at (-0.5,0.1) {$\delta^\ast$};\node at  (0.1,0.25)  {$\mu^\ast$};},
		\treey{\cdlr[0.8]{o} \cdl{ol}\cdr{or}\zhds{o/b}
			\node at (0.3,-0.2) {$\delta$};\node at  (0.1,0.25)  {$\mu$};}
		-\treey{\cdlr[0.8]{o} \cdl{ol}\cdr{or}
			\node at (ol) [scale=0.6]{$\bullet$};\node at (-0.5,0.1) {$\delta$};\node at  (0.1,0.25)  {$\mu$};}
		-\treey{\cdlr[0.8]{o} \cdl{ol}\cdr{or}
			\node at (or) [scale=0.6]{$\bullet$};\node at (0.5,0.1) {$\delta$};\node at  (0.1,0.25)  {$\mu$};}\Bigg\rangle_2
		=&\pair{\delta^\ast\circ_1\mu^\ast, \delta\circ_1\mu}_2+\pair{\mu^\ast\circ_1\delta^\ast, \mu\circ_1\delta}_2\\
		=&-\delta^\ast(\delta)\mu^\ast(\mu)+\mu^\ast(\mu)\delta^\ast(\delta)=0
\end{align*}
 for any $\delta,\delta_1,\delta_2\in\Delta$. Similarly,
\begin{align*}
		&\Bigg\langle
		\treey{\cdlr[0.8]{o} \cdl{ol}\cdr{or}\zhds{o/b}
			\node at (0.3,-0.2) {$\delta^\ast$};\node at  (0.1,0.25)  {$\mu^\ast$};}
		-\treey{\cdlr[0.8]{o} \cdl{ol}\cdr{or}
			\node at (or) [scale=0.6]{$\bullet$};\node at (0.5,0.1) {$\delta^\ast$};\node at  (0.1,0.25)  {$\mu^\ast$};},
		\treey{\cdlr[0.8]{o} \cdl{ol}\cdr{or}\zhds{o/b}
			\node at (0.3,-0.2) {$\delta$};\node at  (0.1,0.25)  {$\mu$};}
		-\treey{\cdlr[0.8]{o} \cdl{ol}\cdr{or}
			\node at (ol) [scale=0.6]{$\bullet$};\node at (-0.5,0.1) {$\delta$};\node at  (0.1,0.25)  {$\mu$};}
		-\treey{\cdlr[0.8]{o} \cdl{ol}\cdr{or}
			\node at (or) [scale=0.6]{$\bullet$};\node at (0.5,0.1) {$\delta$};\node at  (0.1,0.25)  {$\mu$};}
		\Bigg\rangle_2=0,
\end{align*}
$$\Bigg\langle\treey{\cdlr{ol}\node at  (0.25,-0.15)  {$\mu^\ast$};\node at  (-0.25,0.5)  {$\mu^\ast$};}-\treey{\cdlr{or}\node at  (0.25,-0.15)  {$\mu^\ast$};\node at  (0.4,0.6)  {$\mu^\ast$};},
	\treey{\cdlr{ol}\node at  (0.25,-0.15)  {$\mu$};\node at  (-0.25,0.5)  {$\mu$};}-\treey{\cdlr{or}\node at  (0.25,-0.15)  {$\mu$};\node at  (0.4,0.6)  {$\mu$};}\Bigg\rangle_3
	=\pair{\mu^\ast\circ_1\mu^\ast,\mu\circ_1\mu}_3+\pair{\mu^\ast\circ_2\mu^\ast,\mu\circ_2\mu}_3
	=\mu^\ast(\mu)\mu^\ast(\mu)-\mu^\ast(\mu)\mu^\ast(\mu)=0.
$$
We now compute the dimension of $\stt(E)^{(2)}$:
$$\dim(\stt(E)^{(2)}_1)=|\Delta|^{2},\, \dim(\stt(E)^{(2)}_2)=3|\Delta|, \,\dim(\stt(E)^{(2)}_3)=2,$$
the dimension of the relation $\bfk R$:
$$\dim(\bfk R_1)=\frac{|\Delta|(|\Delta|-1)}{2},\, \dim(\bfk R_2)=|\Delta|, \,\dim(\bfk R_3)=1,$$
and  the dimension of the orthogonal space $R^\perp$:
$$\dim(R_1^\perp)=\frac{|\Delta|(|\Delta|+1)}{2},\, \dim(R_2^\perp)=2|\Delta|,\, \dim(R_3^\perp)=1.$$
Then the conclusion follows from the equality
$$\qquad\qquad \qquad\qquad\,
\dim(R_i^\perp)+\dim(\bfk R_i)=\dim(\stt(E)^{(2)}_i)\, \text{ for } i=1,2,3. \qquad\qquad \qquad\qquad\,\qedhere$$
\end{proof}

\subsection{Self duality of the matching compatibility}
A remarkable property of the matching compatibility is its self duality.
Let us first record an easy fact for later use.

\begin{lemma}\mlabel{lem:orfact}
Let $U, V$ be vector spaces, $W=U\oplus V$ and $\pair{-,-}:W^\ast\otimes W\rightarrow \bfk$
the natural pairing $u^*\otimes v\mapsto u^*(v)$. Suppose $X\subseteq U$ and $Y\subseteq V$.
Then
$$(X\cup Y)^\perp=\bfk \big(X^\perp|_{U^\ast}\cup Y^\perp|_{V^\ast}\big).$$
\end{lemma}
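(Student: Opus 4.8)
The plan is to unwind the definition of the orthogonal complement relative to the direct sum decomposition $W = U \oplus V$, using the fact that the pairing respects this decomposition. First I would note that since the pairing $\pair{-,-} : W^\ast \otimes W \to \bfk$ is the natural one, the dual space splits as $W^\ast = U^\ast \oplus V^\ast$, and the pairing is ``block diagonal'': for $u^\ast \in U^\ast$, $v^\ast \in V^\ast$, $u \in U$, $v \in V$, one has $\pair{u^\ast + v^\ast, u + v} = u^\ast(u) + v^\ast(v)$, with $u^\ast$ pairing trivially against $V$ and $v^\ast$ pairing trivially against $U$. Here $X^\perp|_{U^\ast}$ denotes the orthogonal of $X$ computed inside $U^\ast$ with respect to the pairing $U^\ast \otimes U \to \bfk$, and similarly for $Y^\perp|_{V^\ast}$; I would state this explicitly to fix notation.

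Next I would prove the two inclusions. For $\supseteq$: if $u^\ast \in X^\perp|_{U^\ast}$, then for any $x \in X \subseteq U$ we have $\pair{u^\ast, x} = u^\ast(x) = 0$, and for any $y \in Y \subseteq V$ we have $\pair{u^\ast, y} = 0$ automatically since $u^\ast$ vanishes on $V$; hence $u^\ast \in (X \cup Y)^\perp$. Symmetrically $Y^\perp|_{V^\ast} \subseteq (X\cup Y)^\perp$, and since $(X\cup Y)^\perp$ is a subspace, it contains $\bfk(X^\perp|_{U^\ast} \cup Y^\perp|_{V^\ast})$. For $\subseteq$: take $w^\ast \in (X\cup Y)^\perp$ and write $w^\ast = u^\ast + v^\ast$ with $u^\ast \in U^\ast$, $v^\ast \in V^\ast$. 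For $x \in X$, $0 = \pair{w^\ast, x} = u^\ast(x) + v^\ast(x) = u^\ast(x)$ since $x \in U$ kills $v^\ast$; thus $u^\ast \in X^\perp|_{U^\ast}$. Similarly $v^\ast \in Y^\perp|_{V^\ast}$, so $w^\ast \in \bfk(X^\perp|_{U^\ast} \cup Y^\perp|_{V^\ast})$, in fact lying in the sum $X^\perp|_{U^\ast} + Y^\perp|_{V^\ast}$.

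This is essentially a routine linear-algebra bookkeeping argument, so there is no real obstacle; the only point requiring mild care is the interpretation of the right-hand side. The notation $\bfk(\,\cdot\,)$ on a union of subspaces should be read as the linear span, so the identity really says $(X\cup Y)^\perp = X^\perp|_{U^\ast} \oplus Y^\perp|_{V^\ast}$ under the decomposition $W^\ast = U^\ast \oplus V^\ast$; I would remark on this so the reader is not misled into thinking the union is not already a subspace. No earlier results from the paper are needed beyond the standard conventions on dual spaces and pairings, so the proof is short and self-contained.
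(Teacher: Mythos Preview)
Your argument is correct: the block-diagonal splitting $W^\ast = U^\ast \oplus V^\ast$ immediately gives both inclusions, and your remark that the right-hand side is just $X^\perp|_{U^\ast} \oplus Y^\perp|_{V^\ast}$ is the right way to read the span notation. The paper in fact records this lemma as ``an easy fact'' without proof, so there is no approach to compare against; your self-contained verification is exactly what is needed.
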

\begin{theorem}
Let $\Omega$ be a nonempty finite set.
Let $\mscr{P}=\mscr{T}(E)/\langle R\rangle $ be a finitely generated unary binary quadratic ns  operad. Then
$({\mat{\mscr{P}}_\Omega})^!=~\mat{(\mscr{P}^!)}_\Omega$.
\mlabel{thm:mdul}
\end{theorem}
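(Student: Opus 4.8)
The plan is to unwind both sides of the claimed identity $({\mat{\mscr{P}}_\Omega})^! = \mat{(\mscr{P}^!)}_\Omega$ to presentations by generators and relations, and then to match the relations arity by arity using Theorem~\mref{lem:koszul} together with the orthogonality bookkeeping of Lemma~\mref{lem:orfact}. First I would observe that, by Definition~\mref{defn:mat1} applied in the quadratic case (where $R_4$ and all cubic pieces vanish), the matching operad of a unary binary quadratic operad $\mscr{P} = \mscr{T}(E)/\langle R_{1,2}\sqcup R_{2,2}\sqcup R_{3,2}\rangle$ has the presentation
$$\mat{\mscr{P}}_\Omega = \mscr{T}\Big(\bigoplus_{\omega\in\Omega}E_\omega\Big)\Big/\Big\langle \underset{\mu,\nu\in\Omega}{\bigcup}\big(R^{\mu,\nu}_{1,2}\cup R^{\mu,\nu}_{2,2}\cup R^{\mu,\nu}_{3,2}\big)\Big\rangle,$$
so that $\mat{\mscr{P}}_\Omega$ is again a finitely generated unary binary quadratic ns operad, with generating space $\bigoplus_{\omega\in\Omega}E_\omega$ (which has arity-one part $\bigoplus_\omega E_{\omega,1}$ and arity-two part $\bigoplus_\omega E_{\omega,2}$). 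Then I would apply Theorem~\mref{lem:koszul} to compute $({\mat{\mscr{P}}_\Omega})^! = \mscr{T}\big((\bigoplus_\omega E_\omega)^\ast\big)/\langle (\bigcup_{\mu,\nu} R^{\mu,\nu}_{i,2})^\perp\rangle$, and separately write down $\mat{(\mscr{P}^!)}_\Omega$ using that $\mscr{P}^! = \mscr{T}(E^\ast)/\langle R_1^\perp\sqcup R_2^\perp\sqcup R_3^\perp\rangle$.

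Since $(\bigoplus_\omega E_\omega)^\ast \cong \bigoplus_\omega E_\omega^\ast$ canonically (finite index set, finite-dimensional pieces), the two operads have the same generating space, so the whole content is the identification of relations. Here the key point is that the bilinear form of Eq.~\meqref{eq:midiso} for $\mat{\mscr{P}}_\Omega$ decomposes as an \emph{orthogonal} direct sum over the index tuples: in arity $i$, the space $\stt(\bigoplus_\omega E_\omega)^{(2)}_i$ splits along the combinatorial labels $(\mu,\nu)$ (and, in the arity-two case, along the three summands $E_1\otimes E_2$, $E_2\otimes E_1$, $E_2\otimes E_1$ further refined by $(\mu,\nu)$), and the relation set $\bigcup_{\mu,\nu}R^{\mu,\nu}_{i,2}$ is a disjoint union of relations each living entirely inside one $(\mu,\nu)$-block. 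Applying Lemma~\mref{lem:orfact} repeatedly (to peel off one block at a time) then gives
$$\Big(\underset{\mu,\nu\in\Omega}{\bigcup}R^{\mu,\nu}_{i,2}\Big)^\perp = \bfk\Big(\underset{\mu,\nu\in\Omega}{\bigcup}\big(R^{\mu,\nu}_{i,2}\big)^\perp\big|_{\text{block }(\mu,\nu)}\Big),$$
and the pairing restricted to a single $(\mu,\nu)$-block is, under the identification $P_{\omega,k}\leftrightarrow P_{\omega,k}^\ast$, $i_\omega\leftrightarrow i_\omega^\ast$, exactly a labelled copy of the pairing $\pair{-,-}_i$ used to define $R_i^\perp$ for $\mscr{P}$ itself. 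Hence the orthogonal of the $(\mu,\nu)$-labelled copy of $R_{i,2}$ is the $(\mu,\nu)$-labelled copy of $R_{i,2}^\perp$, i.e.\ $(R_i^\perp)^{\mu,\nu}$. Collecting over all $(\mu,\nu)$ gives exactly the relation set of $\mat{(\mscr{P}^!)}_\Omega$, and the theorem follows.

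The step I expect to be the main obstacle — really the only nontrivial point — is the careful verification that the quadratic tree space of $\mat{\mscr{P}}_\Omega$ decomposes as claimed and that the pairing is block-diagonal with the right blocks. In particular, in arity two one must be attentive to the three-summand structure $E_1\otimes E_2\oplus E_2\otimes E_1\oplus E_2\otimes E_1$ (corresponding to the three tree shapes $\alpha\circ_1\mu$, $\mu\circ_1\alpha$, $\mu\circ_2\alpha$): when $E_1$ and $E_2$ are replaced by $\bigoplus_\omega E_{\omega,1}$ and $\bigoplus_\omega E_{\omega,2}$, each summand expands into $(\mu,\nu)$-indexed pieces, and one must check that $R^{\mu,\nu}_{2,2}$ (the relation $r_{2,2}^n(P_{\mu,k},i_\nu)$, which mixes the label $\mu$ on the unary slot and $\nu$ on the binary slots) sits inside the single block $\{P_{\mu,*}\circ_1 \text{(binary)}_\nu,\ \text{(binary)}_\nu\circ_j P_{\mu,*}\}$, so that distinct $(\mu,\nu)$ give orthogonal blocks. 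Once this block structure is in place, everything else is the routine transcription already carried out in the proof of Theorem~\mref{lem:koszul}, now performed with decorations, together with a dimension count $\dim(R^{\mu,\nu}_{i,2}) + \dim((R^{\mu,\nu}_{i,2})^\perp|_{\text{block}}) = \dim(\text{block})$ that is inherited termwise from the corresponding count for $\mscr{P}$. I would close by remarking that the finiteness of $\Omega$ is used precisely to ensure $(\bigoplus_\omega E_\omega)^\ast \cong \bigoplus_\omega E_\omega^\ast$ and that all the relevant spaces remain finite-dimensional so that $(-)^\perp$ behaves as a genuine orthogonal complement.
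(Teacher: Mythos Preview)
Your proposal is correct and follows essentially the same strategy as the paper: both arguments apply Theorem~\mref{lem:koszul} and then use the block-diagonal structure of the pairing~\meqref{eq:midiso} on $\mscr{T}\big(\bigoplus_{\omega}E_\omega\big)^{(2)}$ together with Lemma~\mref{lem:orfact} to reduce the computation of the orthogonal complement to the individual $(\mu,\nu)$-blocks, where it becomes a relabelled copy of the orthogonal for $\mscr{P}$ itself. The only organizational difference is that the paper first splits into the diagonal piece $\stt_\Omega$ (trees with all decorations from a single $\omega$) versus the off-diagonal piece $\stt_{\Omega^{\mathrm c}}$ and then handles each side, whereas you decompose directly into all $(\mu,\nu)$-blocks at once; your finer decomposition is arguably cleaner and makes explicit the arity-two block structure that the paper leaves implicit.
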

\begin{proof}
Since the operad $\spp=\spp(E, R)$ is quadratic, it follows from Eqs.~(\mref{eq:pr1}) -- (\mref{eq:pr4}) that
$$R_{1,3}=R_{2,3}=R_{3,3}=R_{4}=\emptyset.$$
Abbreviate
\begin{align*}
 R_1:=R_{1,2}&=\Big\{r_{2}^n(P_k, P_\ell):=r_{1,2}^n(P_k, P_\ell)\,\big|\,{1\leq n\leq n_{1}}\Big\},\\
 R_2:=R_{2,2}&=\Big\{r_{2}^n(P_k, i):=r_{2,2}^n(P_k, i)\,\big|\,{1\leq n\leq n_{2}}\Big\},  \\
 R_3:=R_{3,2}&=\Big\{r_{2}^n(i, j):=r_{3,2}^n(i,j)\,\big|\,{1\leq n\leq n_{3}}\Big\},
\end{align*}
and
\begin{equation*}
  R_i^{\mu,\nu}:=R_{i,2}^{\mu,\nu}, \,\text{ for }\, i={1,2,3}.
\end{equation*}
Eq.~(\mref{eq:mat}) gives
\begin{equation}
\sopr{(R^\perp)}_M=\underset{\mu\neq\nu\in\Omega} {\bigcup } \Big((R^\perp)^{\mu,\nu}_1\cup (R^\perp)^{\mu,\nu}_2 \cup (R^\perp)^{\mu,\nu}_3\Big).
\mlabel{eq:mrmpp}
\end{equation}
By Definition~\mref{defn:mat1} and Theorem~\mref{lem:koszul},
\begin{eqnarray*}
(\mat{\mscr{P}}_\Omega)^!&=&\mscr{T}\Big(\big(\bigoplus\limits_{\omega\in\Omega} E_\omega\big)^\ast\Big)\Big/\Big\langle \big(\bigcup\limits_{\omega\in\Omega}R_\omega\cup \sopr{R_M}\big)^\perp\Big\rangle, \\
\mat{(\mscr{P}^!)}_\Omega&=&\mscr{T}\Big(\bigoplus\limits_{\omega\in\Omega} E^\ast_\omega\Big)\Big/\Big\langle \bigcup\limits_{\omega\in\Omega}(R^\perp)_\omega\cup \sopr{(R^\perp)}_M\Big\rangle.
\end{eqnarray*}
We identify $\mscr{T}\Big(\bigoplus\limits_{\omega\in\Omega} E^\ast_\omega\Big)$ with $\mscr{T}\Big(\big(\bigoplus\limits_{\omega\in\Omega} E_\omega\big)^\ast\Big)$ by
$\bigoplus\limits_{\omega\in\Omega} E^\ast_\omega\cong \big(\bigoplus\limits_{\omega\in\Omega} E_\omega\big)^\ast$.
Denote by $\stt_\Omega$ (resp. $\stt_{\Omegac})$ the subspace of $\mscr{T}\big(\bigoplus\limits_{\omega\in\Omega} E_\omega^\ast\big)$
generated by trees with vertices decorated by $E^\ast_\omega$ (resp. $E^\ast_{\omega_1}$,$E^\ast_{\omega_2}$,\ldots ,$E^\ast_{\omega_n}$, for $1\leq n\leq|\Omega|$), for some $\omega\in \Omega$ (resp. for ${\omega_i}\in\Omega$, not all identical).
Then
\begin{equation}
\mscr{T}\Big(\bigoplus\limits_{\omega\in\Omega} E_\omega^\ast\Big)=\stt_\Omega\oplus\stt_\Omegac.
\mlabel{eq:mtsplit}
\end{equation}
Denote by $\stt_{\Omegac,n}$ the component of $\stt_{\Omegac}$ in arity $n$.
We only need to verify
$\Big\langle (\bigcup\limits_{\omega\in\Omega}R_\omega\cup \sopr{R_M})^\perp\Big\rangle = \Big\langle \bigcup\limits_{\omega\in\Omega}(R^\perp)_\omega\cup \sopr{(R^\perp)}_M \Big\rangle$,
which will be achieved by showing the identity
\begin{equation}
 \bfk\Big( \bigcup\limits_{\omega\in\Omega}R_\omega\cup \sopr{R_M}\Big)^\perp=\bfk\Big(\bigcup\limits_{\omega\in\Omega}(R^\perp)_\omega\cup \sopr{(R^\perp)}_M \Big).
 \mlabel{eq:mequir}
\end{equation}
Now the left hand side of the equation is
\begin{eqnarray*}
&&\bfk\Big( \bigcup\limits_{\omega\in\Omega}R_\omega\cup \sopr{R_M}\Big)^\perp\\
&=&\bfk\biggr( \big(\bigcup\limits_{\omega\in\Omega}R_\omega\cup \sopr{R_M}\big)\big|_{\stt_\Omega}\sqcup\big(\bigcup\limits_{\omega\in\Omega}R_\omega\cup \sopr{R_M}\big)\big|_{\stt_\Omegac}\biggr)^\perp
 \quad\quad(\text{by Eq.~(\mref{eq:mtsplit})})\\
&=&\bfk\biggr( \big(\bigcup\limits_{\omega\in\Omega}R_\omega\cup \sopr{R_M}\big)^\perp\big|_{\stt_\Omega}\sqcup\big(\bigcup\limits_{\omega\in\Omega}R_\omega\cup \sopr{R_M}\big)^\perp\big|_{\stt_\Omegac}\biggr)
\quad\quad(\text{by Lemma}~\mref{lem:orfact})\\
&=&\bfk\biggr( \big(\bigcup\limits_{\omega\in\Omega}R_\omega\big)^\perp\big|_{\stt_\Omega}\sqcup\big( \sopr{R_M}\big)^\perp\big|_{\stt_\Omegac}\biggr)\\
&=&\bfk\biggr( \bigcup\limits_{\omega\in\Omega}\big(R^\perp)_{\omega}
\sqcup\big( \underset{\mu\neq\nu\in\Omega} {\bigcup} R^{\mu,\nu}_1\big)^\perp_1\big|_{\stt_{\Omegac,1}}
\sqcup\big( \underset{\mu\neq\nu\in\Omega} {\bigcup} R^{\mu,\nu}_2\big)^\perp_2\big|_{\stt_{\Omegac,2}}
\sqcup\big( \underset{\mu\neq\nu\in\Omega} {\bigcup} \sopr{R}^{\mu,\nu}_3\big)^\perp_3\big|_{\stt_{\Omegac,3}}\biggr)\\
&& \qquad \qquad \qquad(\text{by Eq.~(\mref{eq:mat}) and Lemma~\mref{lem:orfact}}).
\end{eqnarray*}
Applying the arity grading and Eq.~(\mref{eq:mrmpp}), then Eq.~(\mref{eq:mequir}) amounts to the three equations:
 \begin{eqnarray}
\bfk\Big(\big( \underset{\mu\neq\nu\in\Omega} {\bigcup} R^{\mu,\nu}_i\big)^\perp_i\big|_{\stt_\Omegac}\Big)
&=&\bfk \Big(\underset{\mu\neq\nu\in\Omega} {\bigcup } (R^\perp)^{\mu,\nu}_i \Big), \quad  i=1,2,3. \mlabel{eq:mkr1}
\end{eqnarray}

Denote
\begin{equation}
  (R^\perp)_1 := ~\biggr\{{r'}_1^n(P^{\dual}_k,P^{\dual}_\ell):=\sum_{1\leq k,\ell\leq t}\alpha_{k,\ell}'^{n}~~\treeyy{\cdu{o} \node  at (0,-0.2)[scale=0.6] {$\bullet$};\node at (0.3,-0.2) {$P^{\dual}_\ell$};
\node  at (0,0.2)[scale=0.6] {$\bullet$};\node at (0.3,0.2) {$P^{\dual}_k$};}\, \Big| \,1\leq n \leq n'_1\,\biggr\}.
\mlabel{eq:morr1}
\end{equation}
Then Eq.~(\mref{eq:mkr1}) for $i=1$ follows from
\begin{eqnarray*}
 \bfk\Big(\big( \underset{\mu\neq\nu\in\Omega} {\bigcup} R^{\mu,\nu}_1\big)^\perp_1\big|_{\stt_\Omegac}\Big)
 &=&\bfk \Big\{{r}_1^n(P_{\mu,k},P_{\nu,\ell})\,
\big|\,\mu\neq\nu\in\Omega,\,{1\leq n\leq n_{1}}\Big\}^\perp_1\big|_{\stt_\Omegac}\\
&=&\bfk \Big\{ {r'}_1^n(P^{\dual}_{\mu,k},P^{\dual}_{\nu,\ell})\,
\big|\,\mu\neq\nu\in\Omega,\,{1\leq n\leq n'_1}\Big\}\quad(\text{by Eqs.~}(\mref{eq:morr1}) \,\text{ and }\,(\mref{eq:rperp}))\\
&=&\bfk \Big(\underset{\mu\neq\nu\in\Omega} {\bigcup } (R^\perp)^{\mu,\nu}_1\Big).
\end{eqnarray*}
A similar argument yields Eq.~(\mref{eq:mkr1}) for $i=2,3$.
\end{proof}
\begin{coro}
\begin{enumerate}
\item $($\mcite{ZBG}$)$ The operad of matching (associative) algebras is self dual.
\item The operad of (multiple) matching Poisson algebras is self dual.
\end{enumerate}
\end{coro}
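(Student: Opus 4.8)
The plan is to obtain both items as immediate consequences of Theorem~\mref{thm:mdul}: whenever $\spp$ is a finitely generated binary quadratic ns operad that is Koszul self dual in the sense $\spp^!\cong\spp$, then for a finite nonempty parameter set $\Omega$ we have
$$(\mat{\spp}_\Omega)^!\cong\mat{(\spp^!)}_\Omega\cong\mat{\spp}_\Omega,$$
so that $\mat{\spp}_\Omega$ is self dual. It thus suffices, in each case, to identify the relevant $\spp$, to verify $\spp^!\cong\spp$, and to recognize $\mat{\spp}_\Omega$ as the matching version of the structure in question.

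For item (1) I would take $\spp=\as$, the associative operad of Example~\mref{ex:nsop}~\mref{exam:0}, with one binary generator $\mu$ and relation $R=R_{3,2}=\bfk\{\mu\circ_1\mu-\mu\circ_2\mu\}$. Using Theorem~\mref{lem:koszul} and the nondegenerate form of Eq.~\meqref{eq:midiso} on the $2$-dimensional space $\stt(E)^{(2)}_3=\bfk\{\mu\circ_1\mu\}\oplus\bfk\{\mu\circ_2\mu\}$, a one-line computation shows that $a\,\mu^\ast\circ_1\mu^\ast+b\,\mu^\ast\circ_2\mu^\ast$ is orthogonal to $\mu\circ_1\mu-\mu\circ_2\mu$ precisely when $a+b=0$; hence $R_{3,2}^\perp=\bfk\{\mu^\ast\circ_1\mu^\ast-\mu^\ast\circ_2\mu^\ast\}$ and $\as^!\cong\as$. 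Theorem~\mref{thm:mdul} then gives $(\mat{\as}_\Omega)^!\cong\mat{\as}_\Omega$, and $\mat{\as}_\Omega$ is the operad of matching associative algebras by Example~\mref{ex:rbmat}~\mref{ex:rbmatb}, recovering~\mcite{ZBG}.

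For item (2) I would take $\spp$ to be the Poisson operad, which is finitely generated and quadratic, with a commutative associative product and an antisymmetric Lie bracket coupled by the Leibniz relation. It is classically Koszul self dual; I would re-derive $\spp^!\cong\spp$ in the present language exactly as in the computation underlying Theorem~\mref{lem:koszul} and Proposition~\mref{prop:kdualdda}, namely by computing the orthogonal complements under Eq.~\meqref{eq:midiso} of the associativity/commutativity relations in arity $3$ for the product, of the antisymmetry/Jacobi relations for the bracket, and of the mixed Leibniz relation, and checking that each is carried to a relation of the same shape. Then, since by the remark following Example~\mref{ex:rbmat} the operad $\mat{\spp}_\Omega$ encodes the matching analogue of $\spp$-algebras for any binary quadratic $\spp$, the operad $\mat{\mathrm{Pois}}_\Omega$ is the operad of (multiple) matching Poisson algebras, and Theorem~\mref{thm:mdul} shows it is self dual.

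The main obstacle is confined to item (2). For item (1) the argument reduces to the orthogonal-complement calculation above, which is immediate. For item (2) the genuine content is twofold: since commutativity is a symmetric relation, one must either invoke the symmetric analogue of Theorem~\mref{thm:mdul} (not developed in this paper but parallel to it) or choose a presentation to which the ns statement literally applies; and one must check that the self-duality pairing sends the mixed product--bracket (Leibniz) relation to itself. Once $\mathrm{Pois}^!\cong\mathrm{Pois}$ is in hand, self-duality of the matching Poisson operad follows automatically from Theorem~\mref{thm:mdul}.
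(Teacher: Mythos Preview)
Your approach matches the paper's exactly: the corollary is stated without proof, as an immediate consequence of Theorem~\mref{thm:mdul} applied to the self-dual operads $\as$ and $\mathrm{Pois}$. Your explicit verification of $\as^!\cong\as$ via Eq.~\meqref{eq:midiso} is correct and supplies what the paper leaves implicit.

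Your caveat about item~(2) is well taken and in fact identifies a point the paper glosses over: Theorem~\mref{thm:mdul} is stated only for \emph{nonsymmetric} unary binary quadratic operads, whereas the Poisson operad is genuinely symmetric (commutativity and antisymmetry are $S_2$-relations, not ns relations). The paper does not provide the symmetric analogue of Theorem~\mref{thm:mdul}; it simply asserts the Poisson case as a corollary. So your proposal is at least as complete as the paper's own treatment, and your flagging of the symmetric-vs-ns issue is a legitimate observation rather than a defect in your argument.
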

We also give some examples of self dual operads which have nontrivial unary operations.
\begin{coro}
\begin{enumerate}	
	\item Let $\spp$ be the operad with generators
$$E_1 = ~\bfk\biggr\{\treeyy[scale=0.8]{\cdu{o} \node  at (0,0) [scale=0.6]{$\bullet$};\node at (0.3,0) {$d_1$};},\,\,\treeyy[scale=0.8]{\cdu{o} \node  at (0,0) [scale=0.6]{$\bullet$};\node at (0.3,0) {$d_2$};}\biggr\},\quad
E_2 = \bfk\treey{\cdlr{o}},$$
and relations
\begin{gather*}
R_1=\left\{
\treeyy{\cdu{o} \node  at (0,-0.2) [scale=0.6]{$\bullet$};\node at (0.3,-0.2) {$d_1$};\node  at (0,0.2) [scale=0.6]{$\bullet$};\node at (0.3,0.2) {$d_1$};},\,\,
\treeyy{\cdu{o} \node  at (0,-0.2) [scale=0.6]{$\bullet$};\node at (0.3,-0.2) {$d_1$};\node  at (0,0.2) [scale=0.6]{$\bullet$};\node at (0.3,0.2) {$d_2$};}\right\},\quad
R_3=\left\{\treey{\cdlr{ol}}-\treey{\cdlr{or}}\right\},\\
~R_2=\left\{\treey{\cdlr[0.8]{o} \cdl{ol}\cdr{or}\zhds{o/b}\node at (0.3,-0.2) {$d_1$};}-\treey{\cdlr[0.8]{o} \cdl{ol}\cdr{or}\node at (ol) [scale=0.6]{$\bullet$};\node at (-0.5,0.1) {$d_1$};}
-\treey{\cdlr[0.8]{o} \cdl{ol}\cdr{or}\node at (or) [scale=0.6]{$\bullet$};\node at (0.5,0.1) {$d_1$};},\quad
\treey{\cdlr[0.8]{o} \cdl{ol}\cdr{or}\zhds{o/b}\node at (0.3,-0.2) {$d_2$};}-\treey{\cdlr[0.8]{o} \cdl{ol}\cdr{or}\node at (ol) [scale=0.6]{$\bullet$};\node at (-0.5,0.1) {$d_2$};},
\quad\treey{\cdlr[0.8]{o} \cdl{ol}\cdr{or}\zhds{o/b}\node at (0.3,-0.2) {$d_2$};}-\treey{\cdlr[0.8]{o} \cdl{ol}\cdr{or}\node at (or) [scale=0.6]{$\bullet$};\node at (0.5,0.1) {$d_2$};}\right\}.
\end{gather*}
Then $\spp$ is self dual.
\mlabel{cor:undual1}
\item For the operad $\spp$ above and any nonempty finite set $\Omega$, the operad $\mat{\spp}_\Omega$ is self dual.
\mlabel{cor:undual2}
\end{enumerate}
\mlabel{cor:undual}
\end{coro}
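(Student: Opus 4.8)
The plan is to prove part~\mref{cor:undual1} by computing the Koszul dual $\spp^!$ explicitly via Theorem~\mref{lem:koszul} and exhibiting an isomorphism with $\spp$, and then to obtain part~\mref{cor:undual2} formally from part~\mref{cor:undual1} together with Theorem~\mref{thm:mdul}. For part~\mref{cor:undual1}, write $E_1=\bfk d_1\oplus\bfk d_2$ and $E_2=\bfk\mu$ for the binary generator. In operadic notation the relations read
$$R_1=\bfk\{\,d_1\circ_1 d_1,\ d_1\circ_1 d_2\,\},\qquad R_3=\bfk\{\,\mu\circ_1\mu-\mu\circ_2\mu\,\},$$
$$R_2=\bfk\{\,d_1\circ_1\mu-\mu\circ_1 d_1-\mu\circ_2 d_1,\ \ d_2\circ_1\mu-\mu\circ_1 d_2,\ \ d_2\circ_1\mu-\mu\circ_2 d_2\,\}.$$
By Theorem~\mref{lem:koszul} we have $\spp^!=\stt(E^\ast)/\langle R_1^\perp,R_2^\perp,R_3^\perp\rangle$, and I would determine the $R_i^\perp$ exactly as in the proof of Proposition~\mref{prop:kdualdda}: since $\dim\stt(E)^{(2)}_1=|E_1|^2=4$, $\dim\stt(E)^{(2)}_2=3|E_1||E_2|=6$, $\dim\stt(E)^{(2)}_3=2|E_2|^2=2$ while $\dim\bfk R_1=2$, $\dim\bfk R_2=3$, $\dim\bfk R_3=1$, we get $\dim R_1^\perp=2$, $\dim R_2^\perp=3$, $\dim R_3^\perp=1$. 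Evaluating the pairing of Eq.~\meqref{eq:midiso} on these subspaces then yields
$$R_1^\perp=\bfk\{\,d_2^\ast\circ_1 d_1^\ast,\ d_2^\ast\circ_1 d_2^\ast\,\},\qquad R_3^\perp=\bfk\{\,\mu^\ast\circ_1\mu^\ast-\mu^\ast\circ_2\mu^\ast\,\},$$
$$R_2^\perp=\bfk\{\,d_1^\ast\circ_1\mu^\ast-\mu^\ast\circ_1 d_1^\ast,\ \ d_1^\ast\circ_1\mu^\ast-\mu^\ast\circ_2 d_1^\ast,\ \ d_2^\ast\circ_1\mu^\ast-\mu^\ast\circ_1 d_2^\ast-\mu^\ast\circ_2 d_2^\ast\,\}.$$

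To finish part~\mref{cor:undual1} I would observe that the linear isomorphism $\phi\colon E^\ast\to E$ defined by $d_1^\ast\mapsto d_2$, $d_2^\ast\mapsto d_1$, $\mu^\ast\mapsto\mu$ extends to an isomorphism of free ns operads $\stt(E^\ast)\xrightarrow{\ \sim\ }\stt(E)$, and that it carries $\bfk R_1^\perp$, $\bfk R_2^\perp$, $\bfk R_3^\perp$ onto $\bfk R_1$, $\bfk R_2$, $\bfk R_3$ respectively. This is immediate from the displayed formulas: interchanging $d_1\leftrightarrow d_2$ sends the two generators of $R_1^\perp$ to $d_1\circ_1 d_2$ and $d_1\circ_1 d_1$, the single generator of $R_3^\perp$ to that of $R_3$, and the three generators of $R_2^\perp$ to the three generators of $R_2$ (in a permuted order). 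Hence $\phi$ descends to an isomorphism of operads $\spp^!\cong\spp$, which is part~\mref{cor:undual1}.

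For part~\mref{cor:undual2}, note that $\spp$ is a finitely generated unary binary quadratic ns operad, so Theorem~\mref{thm:mdul} applies and gives $(\mat{\spp}_\Omega)^!=\mat{(\spp^!)}_\Omega$. The isomorphism $\phi$ of part~\mref{cor:undual1} is simply a relabeling of the unary generators ($d_1\leftrightarrow d_2$) that fixes the binary generator $\mu$; performing the analogous relabeling within each copy $\spp_\omega$, $\omega\in\Omega$, identifies the presentation of $\mat{(\spp^!)}_\Omega$ in Definition~\mref{defn:mat1} with that of $\mat{\spp}_\Omega$, since the generator spaces $\bigoplus_\omega E_\omega$ and the relation set $\mrr$ are assembled from the presentation $(E,R)$ in a way that transforms compatibly under an isomorphism of presentations. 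Therefore $(\mat{\spp}_\Omega)^!=\mat{(\spp^!)}_\Omega\cong\mat{\spp}_\Omega$, giving part~\mref{cor:undual2}.

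The only point demanding care is the computation of $R_2^\perp$: the pairing in Eq.~\meqref{eq:midiso} carries a minus sign on the components $\alpha^\ast\circ_1\mu^\ast$ and $\mu^\ast\circ_2\mu^\ast$, so one must track signs when solving $\langle x,r\rangle_2=0$ for $r$ running over the three generators of $R_2$, and then check that the resulting three-dimensional space is precisely the one listed above and that the swap $d_1\leftrightarrow d_2$ matches it with $R_2$ and not with some other three-dimensional subspace. Everything else—the arity-by-arity dimension count and the comparison of presentations in part~\mref{cor:undual2}—is routine.
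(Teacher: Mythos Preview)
Your proof is correct and follows essentially the same approach as the paper's: compute $\spp^!$ via Theorem~\mref{lem:koszul} by the dimension count and pairing calculation modeled on Proposition~\mref{prop:kdualdda}, and then invoke Theorem~\mref{thm:mdul} for part~\mref{cor:undual2}. Your contribution beyond the paper's terse proof is making explicit the isomorphism $\phi$ given by the swap $d_1^\ast\leftrightarrow d_2$, $d_2^\ast\leftrightarrow d_1$, $\mu^\ast\leftrightarrow\mu$, which is indeed the point: the computed $R_i^\perp$ are not literally equal to $R_i$ but become so after this relabeling, and you check this correctly in each arity (including the sign-sensitive arity~$2$ case).
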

\begin{proof}
Item~\mref{cor:undual1} can be verified by the same computation as for Proposition~\mref{prop:kdualdda}.
Then Item~\mref{cor:undual2} follows from Theorem~\mref{thm:mdul}.
\end{proof}
We next show that taking a matching family of an operad can be obtained by taking the black square product or the white square product.
\begin{prop}
Let $\Omega$ be a nonempty finite set. Let $\spp$ be a finitely generated binary quadratic ns operad.
Then
$$\mat{\spp}_\Omega\cong \mat{\as}_\Omega\blacksquare\spp\text{ and }\,
\mat{\spp}_\Omega\cong \mat{\as}_\Omega \square \spp. $$
\mlabel{prop:maninbll}
\end{prop}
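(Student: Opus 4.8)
The plan is to prove the first isomorphism $\mat{\spp}_\Omega\cong\mat{\as}_\Omega\blacksquare\spp$ by a direct computation patterned on the proof of Proposition~\mref{prop:maninbl}, and then to deduce the second isomorphism $\mat{\spp}_\Omega\cong\mat{\as}_\Omega\square\spp$ from it by Koszul duality. Write $\spp=\stt(F)/\langle S\rangle$ with $F=F_2$ spanned by binary operations $1,\dots,s$ and $S=S_{3,2}$, as in Proposition~\mref{prop:maninbl}. By the remark after Definition~\mref{defn:mat1} together with Eq.~\meqref{eq:spr123}, one has
$$\mat{\spp}_\Omega=\stt\Big(\bigoplus_{\omega\in\Omega}F_\omega\Big)\Big/\Big\langle\bigcup_{\mu,\nu\in\Omega}S_{3,2}^{\mu,\nu}\Big\rangle,\qquad \mat{\as}_\Omega=\stt\Big(\bigoplus_{\omega\in\Omega}E_\omega\Big)\Big/\Big\langle\bigcup_{\mu,\nu\in\Omega}R_{3,2}^{\mu,\nu}\Big\rangle,$$
where $E_\omega$ is spanned by one binary operation indexed by $\omega$ and $R_{3,2}^{\mu,\nu}$ is the matching-associativity relation in the copies $\mu,\nu$ (Example~\mref{ex:rbmat}~\mref{ex:rbmatb}).

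For the black square, I would first identify the generators exactly as in Eq.~\meqref{eq:wqpiso}:
$$\Big(\bigoplus_{\omega\in\Omega}E_\omega\Big)\otimes F\;\cong\;\bigoplus_{\omega\in\Omega,\;1\le i\le s}\bfk\,(\omega\otimes i)\;\cong\;\bigoplus_{\omega\in\Omega}F_\omega,\qquad \omega\otimes i\longleftrightarrow i_\omega,$$
and then compute $\big(\bigcup_{\mu,\nu\in\Omega}R_{3,2}^{\mu,\nu}\big)\blacksquare S$ relation by relation via the black product formula Eq.~\meqref{eq:maninb}. The key observation is that each $R_{3,2}^{\mu,\nu}$ is a single relation whose only nonzero structure coefficients are $\pm1$, so its black product with the $n$-th relation of $S$ (carrying the coefficients $\gamma^{n,1}_{i,j},\gamma^{n,2}_{i,j}$ of Eq.~\meqref{eq:pr3}) equals, after the identification above, precisely the $n$-th relation of $S_{3,2}^{\mu,\nu}$ (Eq.~\meqref{eq:pr3b}); the signs come out as in the proof of Proposition~\mref{prop:maninbl}, the only changes being that the generic coefficients of $S$ are carried along and the off-diagonal subscripts $\mu\ne\nu$ are now allowed. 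Ranging over all $\mu,\nu\in\Omega$ and $1\le n\le n_{3,2}$ gives $\big(\bigcup_{\mu,\nu}R_{3,2}^{\mu,\nu}\big)\blacksquare S\cong\bigcup_{\mu,\nu}S_{3,2}^{\mu,\nu}$, hence $\mat{\as}_\Omega\blacksquare\spp\cong\mat{\spp}_\Omega$.

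For the white square, rather than recomputing I would invoke Koszul duality. Since $\spp$ is finitely generated binary quadratic, so is $\spp^!$ by Theorem~\mref{lem:koszul}, and the first isomorphism applied to $\spp^!$ gives $\mat{\as}_\Omega\blacksquare\spp^!\cong\mat{(\spp^!)}_\Omega$. Now dualize: by Theorem~\mref{thm:mdul} the right-hand side has Koszul dual $(\mat{(\spp^!)}_\Omega)^!\cong\mat{(\spp^{!!})}_\Omega\cong\mat{\spp}_\Omega$, while by the classical compatibility of the Manin products with Koszul duality, $(\mscr{A}\blacksquare\mscr{B})^!\cong\mscr{A}^!\square\mscr{B}^!$ (\mcite{Val,EFG}), together with $\as^!\cong\as$ (immediate from Theorem~\mref{lem:koszul}, so that $\mat{\as}_\Omega$ is self dual by Theorem~\mref{thm:mdul}) and $\spp^{!!}\cong\spp$, the left-hand side has Koszul dual $(\mat{\as}_\Omega)^!\square\spp^{!!}\cong\mat{\as}_\Omega\square\spp$. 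Comparing the two yields $\mat{\as}_\Omega\square\spp\cong\mat{\spp}_\Omega$. Alternatively one argues directly from Definition~\mref{defn:manin}~\mref{item:maninw}, but then one must verify that the two families of relations produced by the white product together present the same operadic ideal as $\bigcup_{\mu,\nu}S_{3,2}^{\mu,\nu}$, a more delicate check that the duality route avoids.

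The black square step is essentially bookkeeping and proceeds exactly as in Proposition~\mref{prop:maninbl}; the only place to be careful is the sign contributed by the minus in Eq.~\meqref{eq:maninb} against the presentation of $S$ in Eq.~\meqref{eq:pr3}. The main obstacle is the white square step: one must be sure that the three inputs used there — $\as^!\cong\as$, Theorem~\mref{thm:mdul} applied to $\spp^!$ (legitimate since $\spp^!$ is again finitely generated binary quadratic), and the Koszul duality $(\mscr{A}\blacksquare\mscr{B})^!\cong\mscr{A}^!\square\mscr{B}^!$ — are all valid in the nonsymmetric setting. The first two are supplied by Theorems~\mref{lem:koszul} and~\mref{thm:mdul}, and the last is the cited standard fact; if one prefers to avoid it, the effort shifts entirely to the ideal comparison flagged above.
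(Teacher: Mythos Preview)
Your proposal is correct and follows essentially the same approach as the paper: a direct black-product computation mirroring Proposition~\mref{prop:maninbl} for the first isomorphism, and a Koszul duality argument using Theorem~\mref{thm:mdul} together with $(\mscr{A}\blacksquare\mscr{B})^!\cong\mscr{A}^!\square\mscr{B}^!$ for the second. The only cosmetic difference is that the paper dualizes the first isomorphism (for $\spp$) and then replaces $\spp^!$ by $\spp$, whereas you apply the first isomorphism to $\spp^!$ and then dualize; these are the same maneuver.
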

\begin{proof}
For the associative operad $\as=\stt(E)/\langle R\ \rangle$ in Example~\mref{ex:nsop}~\mref{exam:0}, by Definition~\mref{defn:comp1}, we have
\begin{align*}
\mat{\as}_\Omega
=\mscr{T}\Big(\bigoplus\limits_{\omega\in\Omega} E_\omega\Big)\Big/\Big\langle \bigcup\limits_{\omega\in\Omega}R_\omega\cup \mrr\Big\rangle
=\mscr{T}\Big(\bigoplus\limits_{\omega\in\Omega} E_\omega\Big)\Big/\Big\langle \bigcup\limits_{\omega\in\Omega}R_\omega\cup \Big(\underset{\mu\neq\nu\in\Omega} {\bigcup } R^{\mu,\nu}\Big)\Big\rangle,
\end{align*}
where the generators
$$\bigoplus\limits_{\omega\in\Omega} E_\omega=
\bigoplus\limits_{\omega\in\Omega}\bfk\treey{\cdlr{o}\node  at (0,0.35) {$\omega$}; }$$
and relations are given in Eq.~\meqref{eq:linassr}.
Let $\spp=\stt(F)/\langle S \rangle$  with
\begin{equation*}
	F :=F_2:=\bfk\left\{
\treey{\cdlr{o}\node  at (0,0.25) {$1$};},
\treey{\cdlr{o}\node  at (0,0.25) {$2$};},~... ~,
\treey{\cdlr{o}\node  at (0,0.25) {$s$};}
\right\}
\end{equation*}
and
\begin{equation*}
S:=S_{3,2}:= ~\biggr\{\sum_{1\leq i,j\leq s}\ka{i}{j}{1}
\treey{\cdlr{ol}
\foreach \i/\j in {ol/$i$,o/$j$} {\node[above] at (\i) {\j};}}
+\ka{i}{j}{2}\treey{\cdlr{or}
\foreach \i/\j in {or/$j$,o/$i$} {\node[above] at (\i) {\j};}}
\quad \Big| \,{1\leq d\leq s}\biggr\}.
\end{equation*}
Recall from Eq.~\meqref{eq:wqpiso} that
$$\Big(\bigoplus\limits_{\omega\in\Omega} E_\omega \Big)\otimes F
\cong\bigoplus\limits_{\omega\in\Omega} F_\omega .$$
By Definitions~\mref{defn:manin}~\mref{item:maninb}, we obtain
\begin{eqnarray*}
&&\Big(\bigcup\limits_{\omega\in\Omega}R_\omega\cup\underset{\mu\neq\nu\in\Omega} {\bigcup } R^{\mu,\nu}\Big)\blacksquare S\\
&=&\bigcup\limits_{\omega\in\Omega}\big(R_\omega\blacksquare S\big)\cup\underset{\mu\neq\nu\in\Omega} {\bigcup }\Big( R^{\mu,\nu}\blacksquare S\Big)\\
&=&\bigcup\limits_{\omega\in\Omega}\biggr\{\treey{\cdlr{ol}\foreach \i/\j in {ol/$\omega$,o/$\omega$} {\node [above] at (\i) {\j};}}
 -\treey{\cdlr{or}\foreach \i/\j in {or/$\omega$,o/$\omega$} {\node[above] at (\i) {\j};}}\biggr\}
 \blacksquare \biggr\{\sum_{1\leq i,j\leq s}\ka{i}{j}{1}
\treey{\cdlr{ol}
\foreach \i/\j in {ol/$i$,o/$j$} {\node[above] at (\i) {\j};}}
+\ka{i}{j}{2}\treey{\cdlr{or}
\foreach \i/\j in {or/$j$,o/$i$} {\node[above] at (\i) {\j};}}
\quad \Big| \,{1\leq d\leq s}\biggr\}\\
&& \cup \underset{\mu\neq\nu\in\Omega} {\bigcup }\biggr\{
\treey{\cdlr{ol}\foreach \i/\j in {ol/$\mu$,o/$\nu$} {\node [above] at (\i) {\j};}}
 -\treey{\cdlr{or}\foreach \i/\j in {or/$\nu$,o/$\mu$} {\node[above] at (\i) {\j};}}\biggr\}
 \blacksquare
 \biggr\{\sum_{1\leq i,j\leq s}\ka{i}{j}{1}
\treey{\cdlr{ol}
\foreach \i/\j in {ol/$i$,o/$j$} {\node[above] at (\i) {\j};}}
+\ka{i}{j}{2}\treey{\cdlr{or}
\foreach \i/\j in {or/$j$,o/$i$} {\node[above] at (\i) {\j};}}
\quad \Big| \,{1\leq d\leq s}\biggr\}\quad(\text{by Eq.}~\meqref{eq:linassr})\\
&\cong&\bigcup\limits_{\omega\in\Omega}\biggr\{\sum_{1\leq i,j\leq s}\ka{i}{j}{1}
\treey{\cdlr{ol}
\foreach \i/\j in {ol/$i_\omega$,o/$j_\omega$} {\node[above] at (\i) {\j};}}
+\ka{i}{j}{2}\treey{\cdlr{or}
\foreach \i/\j in {or/$j_\omega$,o/$i_\omega$} {\node[above] at (\i) {\j};}}
\quad \Big| \,{1\leq d\leq s}\biggr\}\\
&& \cup \underset{\mu\neq\nu\in\Omega} {\bigcup }\biggr\{\sum_{1\leq i,j\leq s}\Big(\ka{i}{j}{1}
\treey{\cdlr{ol}\foreach \i/\j in {ol/$i_\mu$,o/$j_\nu$} {\node [above] at (\i) {\j};}}+\ka{i}{j}{2}\treey{\cdlr{or}\foreach \i/\j in {or/$j_\nu$,o/$i_\mu$} {\node[above] at (\i) {\j};}}\Big)
\quad \Big| {1\leq d\leq s}\biggr\}\quad\quad(\text{by Eqs.}~\meqref{eq:maninb} \text{ and }~\meqref{eq:wqpiso})\\
&=& \bigcup\limits_{\omega\in\Omega}S_{\omega,3,2}\cup\underset{\mu\neq\nu\in\Omega} {\bigcup } S_{3,2}^{\mu,\nu} \quad\quad(\text{by Eqs.}~\meqref{eq:pr3b}\text{ and }~\meqref{eq:spr123}).
\end{eqnarray*}
Therefore
\begin{eqnarray*}
 \mat{\as}_\Omega\blacksquare\spp
&=&\stt\Big(\big(\bigoplus\limits_{\omega\in\Omega} E_\omega \big)\otimes F\Big)\Big/\Big\langle \Big(\bigcup\limits_{\omega\in\Omega}R_\omega\cup\underset{\mu\neq\nu\in\Omega} {\bigcup } R^{\mu,\nu}\Big)\blacksquare S\Big\rangle\\
&\cong&\stt\Big(\bigoplus\limits_{\omega\in\Omega} F_\omega \Big)\Big/\Big\langle \bigcup\limits_{\omega\in\Omega}S_{\omega,3,2}\cup\underset{\mu\neq\nu\in\Omega} {\bigcup } S_{3,2}^{\mu,\nu}\Big\rangle\\
&=&\mat{\spp}_\Omega.
\end{eqnarray*}
To prove the second isomorphism, we recall the duality between the black square products and white square products for any finitely generated binary ns operads $\spp$ and $\mscr{Q}$~\mcite{Val}:
\begin{equation*}
	(\spp\blacksquare\mscr{Q})^!=\spp^!\square\mscr{Q}^!.
\end{equation*}
Applying it to the first isomorphism and utilizing Theorem~\mref{thm:mdul}, we obtain
$$ \mat{(\spp^!)}_\Omega\cong (\mat{\spp}_\Omega)^! \cong (\mat{\as}_\Omega\blacksquare \spp)^!
\cong \mat{\as}_\Omega \square \spp^!,$$
hence the result after replacing $\spp^!$ by $\spp$.
\end{proof}

\section{Total compatibility and Koszul duality}\mlabel{sec:totcom}
In this section, we study the total compatibility. It is stronger than the matching compatibility and is in duality with the linear compatibility.

\subsection{Totally compatible operads}
Let $\spp$ be a unary binary \qc ns operad.
Recall from Eqs.~(\mref{eq:pr1}) -- \meqref{eq:pr4} that the relations $R$ of $\spp$. For $r=\sum \alpha_t t\in R$ with $\alpha_t\in\bfk$, the {\bf support} ${\rm Supp}(r)$ of $r$ is defined to be the set of decorated tree $t$ with $\alpha_t\neq0$. For $S\subseteq R$, denote
$${\rm Supp}(S):=\bigcup_{r\in S} {\rm Supp}(r).$$
Let $\Omega$ be a nonempty set.
\nc\rtmn{\mu,\nu}
For $\rtmn\in\Omega$, we define
\begin{eqnarray*}
\tra{R^{\rtmn}}&:=& \Big\{t_{1,2} (P_{\mu,k},P_{\nu,\ell})-t_{1,2} (P_{\nu,k},P_{\mu,\ell})\,\big| \,{t_{1,2} (P_{k},P_{\ell}) \in {\rm Supp}(R_{1,2})}\Big\},\\
\traa{R^{\rtmn}}&:=& \Big\{t_{1,3} (P_{\mu,k},P_{\nu,\ell},P_{\mu,m})-t_{1,3} (P_{\nu,k},P_{\mu,\ell},P_{\mu,m}),\\
&&t_{1,3} (P_{\mu,k},P_{\nu,\ell},P_{\mu,m})-t_{1,3} (P_{\mu,k},P_{\mu,\ell},P_{\nu,m})\,\big| \,{t_{1,3} (P_{k},P_{\ell},P_{\omega}) \in {\rm Supp}(R_{1,3})}\Big\},\\
\trbb{R^{\rtmn}}&:= &\Big\{t_{2,2} (P_{\mu,k},i_\nu)-t_{2,2} (P_{\nu,k},i_\mu)\,\big| \,{t_{2,2} (P_{k},i)\in {\rm Supp}(R_{2,2})}\Big\},\\
\trb{R^{\rtmn}}&:=& \Big\{t_{2,3} (P_{\mu,k}, {P_{\nu,\ell}},i_\mu)-t_{2,3} (P_{\nu,k}, {P_{\mu,\ell}},i_\mu),\,\\
&&t_{2,3} (P_{\mu,k}, {P_{\nu,\ell}},i_\mu)-t_{2,3} (P_{\mu,k}, {P_{\mu,\ell}},i_\nu)\,\big| \,{t_{2,3} (P_{k},P_{\ell},i)\in {\rm Supp}(R_{2,3})}\Big\},\\
\trc{R^{\rtmn}}&:=& \Big\{t_{3,2} (i_{\mu},j_{\nu})-t_{3,2} (i_{\nu},j_{\mu})\,\big| \,{t_{3,2} (i,j)\in {\rm Supp}(R_{3,2})}\Big\},\\
\trcc{R^{\rtmn}}&:=& \Big\{t_{3,3} (P_{\mu,k},i_{\nu},j_{\mu})-t_{3,3} (P_{\nu,k},i_{\mu},j_{\mu}),\\
&&t_{3,3} (P_{\mu,k},i_{\nu},j_{\mu})-t_{3,3} (P_{\mu,k},i_{\mu},j_{\nu})\,\big| \,{t_{3,3} (P_{k},i,j)\in {\rm Supp}(R_{3,3})}\Big\},\\
\trd{R^{\rtmn}}&:=& \Big\{t_{4} (i_{\mu},j_{\nu},p_{\mu})-t_{4} (i_{\nu},j_{\mu},p_{\mu}),
t_{4} (i_{\mu},j_{\nu},p_{\mu})-t_{4} (i_{\mu},j_{\mu},p_{\nu})\,\big| \,{t_{4} (i,j,p)\in {\rm Supp}(R_{4})}\Big\}.
\end{eqnarray*}
Set
\begin{equation}
\trr:=\bigcup_{\mu\neq\nu\in\Omega\atop i\in\{1,2,3\}}\big(\tri{R^{\rtmn}}\cup \trii{R^{\rtmn}}\cup \trd{R^{\rtmn}}\big).
\mlabel{eq:tr123}
\end{equation}

Roughly speaking, these relations means that the decorated trees appearing in the relations of $\spp$ can carry arbitrary decorations in $\Omega$.

\begin{exam}
Let $\Omega=\{1,2\}$. For the associative operad $\as$, we have
$${\rm Supp}(R_{3,2})=\Big\{\treey{\cdlr{ol}},\treey{\cdlr{or}}\Big\}, $$
\begin{equation*}
\trc{R^{1,2}}
=\Big\{
\treey{\cdlr{ol}\foreach \i/\j in {ol/$1$,o/$2$} {\node [above] at (\i) {\j};}}
 -\treey{\cdlr{ol}\foreach \i/\j in {ol/$2$,o/$1$} {\node [above] at (\i) {\j};}},\,
 \treey{\cdlr{or}\foreach \i/\j in {or/$2$,o/$1$} {\node[above] at (\i) {\j};}}
 -\treey{\cdlr{or}\foreach \i/\j in {or/$1$,o/$2$} {\node[above] at (\i) {\j};}}
\Big\} = - \trc{R^{2,1}}.
\end{equation*}
\end{exam}
Now we give our last compatibility condition for operads. Recall that $\mrr$ is given in Eq.~(\mref{eq:mat}).
\begin{defn}
Let $\Omega$ be a nonempty set. Let $\mscr{P} = \stt(E)/\langle R\rangle$ be a unary binary \qc ns operad.
We call
\begin{equation*}
 \tot{\mscr{P}}_\Omega :=\mscr{T}\Big(\bigoplus\limits_{\omega\in\Omega} E_\omega\Big)\Big/\Big\langle \bigcup\limits_{\omega\in\Omega}R_\omega\cup \mrr\cup\trr\Big\rangle
\end{equation*}
the \textbf{totally compatible operad } of $\mscr{P}$ with parameter $\Omega$.
\mlabel{defn:totcomp1}
\end{defn}
\begin{exam}
Let $\Omega=\{1,2\}$ and $\spp$ be the operad of Rota-Baxter algebras with the unary operation $P$ and binary operation $\bullet$. Then a $\tot{\mscr{P}}_\Omega$-algebra is a vector space $A$ with associative multiplications $\multa, \multb$ and Rota-Baxter operators $P_1, P_2$ satisfying the additional conditions
\begin{equation*}
\prl{\multa}{\multb}=\prl{\multb}{\multa}=\prr{\multa}{\multb}=\prr{\multb}{\multa}\end{equation*}
and
\begin{eqnarray*}
&\ra{\operi}{\multj}{\operi}=\operi(\operi(x)\multj y)+\operi(x\multj \operi(y)),&\\
&\ra{\operi}{\multi}{\operj}=\ra{\operj}{\multi}{\operi}=\ra{\operi}{\multj}{\operi},&\\
&\operj(\operi(x)\multi y)=\operi(\operj(x)\multi y)=\operi(\operi(x)\multj y),&\\
&\operi(x\multi \operj(y))=\operj(x\multi \operi(y))=\operi(x\multj \operi(y)),&\\
\end{eqnarray*}
for $i\neq j\in \{1,2\}$. Here the last three lines are relations $\trb{R^{\mu,\nu}}$, for $\mu,\nu\in\Omega$.
\mlabel{ex:rbtot}
\end{exam}

Since $\tot{\spp}$ has the extra conditions $\trr$ beyond $\mat{\spp}$, we have
\begin{prop}
Let $\spp$ be a \ubqco.
Then there is an epimorphism of ns operads
$$\mat{\spp}_\Omega \longrightarrow \tot{\spp}_\Omega.$$
In other words, a $\tot{\spp}_\Omega$-algebra is a $\mat{\spp}_\Omega$-algebra.
\mlabel{prop:totmat0}
\end{prop}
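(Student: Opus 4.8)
The plan is to exhibit the epimorphism at the level of presentations. By Definition~\mref{defn:mat1} and Definition~\mref{defn:totcomp1}, both $\mat{\spp}_\Omega$ and $\tot{\spp}_\Omega$ are quotients of the same free ns operad $\mscr{T}\big(\bigoplus_{\omega\in\Omega} E_\omega\big)$, namely
$$\mat{\spp}_\Omega=\mscr{T}\Big(\bigoplus\limits_{\omega\in\Omega} E_\omega\Big)\Big/\Big\langle \bigcup\limits_{\omega\in\Omega}R_\omega\cup \mrr\Big\rangle
\quad\text{and}\quad
\tot{\spp}_\Omega=\mscr{T}\Big(\bigoplus\limits_{\omega\in\Omega} E_\omega\Big)\Big/\Big\langle \bigcup\limits_{\omega\in\Omega}R_\omega\cup \mrr\cup\trr\Big\rangle.$$
Since the defining ideal of $\tot{\spp}_\Omega$ is obtained from that of $\mat{\spp}_\Omega$ by adjoining the extra set of relations $\trr$, we have an inclusion of operadic ideals
$$\Big\langle \bigcup\limits_{\omega\in\Omega}R_\omega\cup \mrr\Big\rangle \subseteq \Big\langle \bigcup\limits_{\omega\in\Omega}R_\omega\cup \mrr\cup\trr\Big\rangle.$$
Hence the identity map on $\mscr{T}\big(\bigoplus_{\omega\in\Omega} E_\omega\big)$ descends to a well-defined surjective morphism of ns operads $\mat{\spp}_\Omega \twoheadrightarrow \tot{\spp}_\Omega$, which is the desired epimorphism. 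This is the same mechanism as in Proposition~\mref{prop:matlin0}, where $\langle\lrr\rangle\subseteq\langle\mrr\rangle$ gave the epimorphism $\lin{\spp}_\Omega\to\mat{\spp}_\Omega$.

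For the second sentence, one then pulls back algebra structures along this morphism. If $V$ is a $\tot{\spp}_\Omega$-algebra, it comes with a morphism of ns operads $\tot{\spp}_\Omega \to \text{End}_V$; composing with $\mat{\spp}_\Omega \twoheadrightarrow \tot{\spp}_\Omega$ yields a morphism $\mat{\spp}_\Omega \to \text{End}_V$, i.e. $V$ is a $\mat{\spp}_\Omega$-algebra. Equivalently, the operations on $V$ already satisfy the relations in $\bigcup_{\omega\in\Omega}R_\omega\cup \mrr$, since those form a subset of $\bigcup_{\omega\in\Omega}R_\omega\cup \mrr\cup\trr$.

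There is essentially no obstacle here; the only point meriting a sentence is that $\trr$ is literally a set of elements of the free operad $\mscr{T}\big(\bigoplus_{\omega\in\Omega} E_\omega\big)$ (as displayed in Eq.~\meqref{eq:tr123} and the definitions of $\tri{R^{\rtmn}}$, $\trii{R^{\rtmn}}$, $\trd{R^{\rtmn}}$ preceding it), so that adjoining it to the relation set is meaningful and the ideal it generates only enlarges the original ideal. I would keep the proof to two or three lines, citing Proposition~\mref{prop:matlin0} as a model.

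\begin{proof}
By Definitions~\mref{defn:mat1} and~\mref{defn:totcomp1}, both operads are presented on the same space of generators $\bigoplus_{\omega\in\Omega} E_\omega$, and
$$\Big\langle \bigcup\limits_{\omega\in\Omega}R_\omega\cup \mrr\Big\rangle \subseteq \Big\langle \bigcup\limits_{\omega\in\Omega}R_\omega\cup \mrr\cup\trr\Big\rangle$$
since the right hand ideal is generated by a superset of the generators of the left hand one. Hence the identity on $\mscr{T}\big(\bigoplus_{\omega\in\Omega} E_\omega\big)$ induces a surjective morphism of ns operads $\mat{\spp}_\Omega \twoheadrightarrow \tot{\spp}_\Omega$, exactly as in Proposition~\mref{prop:matlin0}. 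Consequently, if $(V,\rho)$ is a $\tot{\spp}_\Omega$-algebra, then composing $\rho:\tot{\spp}_\Omega\to\text{End}_V$ with this epimorphism gives a morphism $\mat{\spp}_\Omega\to\text{End}_V$, so $V$ is a $\mat{\spp}_\Omega$-algebra.
\end{proof}
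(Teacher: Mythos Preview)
Your proof is correct and takes the same approach as the paper: the paper simply remarks that $\tot{\spp}_\Omega$ has the extra conditions $\trr$ beyond $\mat{\spp}_\Omega$ and states the proposition without further argument, which is exactly the ideal-containment observation you spell out.
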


\subsection{Total compatibility, Koszul duality and Manin white square products}
We first show that the total compatibility is in Koszul dual to the linear compatibility.
\begin{theorem}
Let $\Omega$ be a nonempty finite set.
Let $\mscr{P}=\mscr{T}(E)/\langle R\rangle $  be a finitely generated unary binary quadratic ns  operad. Then
$({\lin{\mscr{P}}_\Omega})^!=~\tot{(\mscr{P}^!)}_\Omega$ and $(~\tot{\mscr{P}}_\Omega)^!=\lin{(\mscr{P}^!)}_\Omega$.
\mlabel{thm:dul}
\end{theorem}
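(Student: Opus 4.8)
The plan is to prove the two duality statements by the same orthogonality bookkeeping that proved Theorem~\mref{thm:mdul}, only now the ``matching'' set of compatibility relations $\mrr$ is replaced by $\lrr$ on one side and by $\mrr\cup\trr$ on the other, and these two are exchanged under taking orthogonal complements. It suffices to prove the first identity $({\lin{\mscr{P}}_\Omega})^!=\tot{(\mscr{P}^!)}_\Omega$: the second follows by applying the first to $\mscr{P}^!$ in place of $\mscr{P}$, using $(\mscr{P}^!)^!\cong\mscr{P}$ for a finitely generated quadratic operad. As in the proof of Theorem~\mref{thm:mdul}, since $\mscr{P}$ is quadratic we have $R_{1,3}=R_{2,3}=R_{3,3}=R_4=\emptyset$, so only the arity $1,2,3$ pieces $R_1=R_{1,2}$, $R_2=R_{2,2}$, $R_3=R_{3,2}$ (and their replicas $R_i^{\mu,\nu}$) enter, and the generating space splits as $\mscr{T}\big(\bigoplus_{\omega}E_\omega^\ast\big)=\stt_\Omega\oplus\stt_{\Omegac}$ in each arity $n\in\{1,2,3\}$.

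First I would record the arity-graded description of the two defining ideals. By Definition~\mref{defn:comp1}, in arity $i\in\{1,2,3\}$ the relations of $\lin{\mscr{P}}_\Omega$ are $\bigcup_{\omega}R_{\omega,i,2}$ together with $\bigcup_{\mu\neq\nu}\big(R_i^{\mu,\nu}\add R_i^{\nu,\mu}\big)$; by Definitions~\mref{defn:mat1} and~\mref{defn:totcomp1}, in arity $i$ the relations of $\tot{(\mscr{P}^!)}_\Omega$ are $\bigcup_{\omega}(R^\perp)_{\omega,i,2}$, the matching piece $\bigcup_{\mu\neq\nu}(R^\perp)_i^{\mu,\nu}$, and the total piece $\trr$ (whose arity-$i$ component, written out with the $\mathrm{Supp}$ notation, consists of the differences $t_i(\dots,P_{\mu,\bullet},\dots)-t_i(\dots,P_{\nu,\bullet},\dots)$ over $t_i\in\mathrm{Supp}((R^\perp)_i)$). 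On the diagonal part $\stt_\Omega$ the orthogonal of $\bigcup_\omega R_{\omega,i,2}$ is $\bigcup_\omega(R^\perp)_{\omega,i,2}$ by Theorem~\mref{lem:koszul} and Lemma~\mref{lem:orfact}, exactly as in the proof of Theorem~\mref{thm:mdul}; so the whole content is concentrated on the off-diagonal part $\stt_{\Omegac}$, arity by arity, and the claim reduces to the three identities
\begin{equation*}
\bfk\Big(\big(\underset{\mu\neq\nu\in\Omega}{\bigcup} R_i^{\mu,\nu}\add R_i^{\nu,\mu}\big)^{\perp}\big|_{\stt_{\Omegac,i}}\Big)
=\bfk\Big(\underset{\mu\neq\nu\in\Omega}{\bigcup}(R^\perp)_i^{\mu,\nu}\ \cup\ \tr{(R^\perp)}_{i}\Big),\qquad i=1,2,3,
\end{equation*}
where $\tr{(R^\perp)}_i$ denotes the arity-$i$ part of $\trr$ built from $R^\perp$.

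The heart of the argument is then a purely linear-algebra computation of an orthogonal complement inside the finite-dimensional space $\stt_{\Omegac,i}$, which for fixed $i$ decomposes over unordered pairs $\{\mu,\nu\}$ (and, since the operad is quadratic, each monomial uses only two of the parameters). Fixing such a pair, on the $\{\mu,\nu\}$-isotypic summand the symmetrized linear relations $\{r_i^n(P_{\mu,\bullet},P_{\nu,\bullet})+r_i^n(P_{\nu,\bullet},P_{\mu,\bullet})\}_n$ span the image of $R_i$ under the symmetrization map $x\otimes y\mapsto x\otimes y+y\otimes x$; I would show that its orthogonal complement, with respect to the pairing~\meqref{eq:midiso} transported to the $\{\mu,\nu\}$-block, is exactly the span of the antisymmetric vectors $t(P_{\mu,\bullet},P_{\nu,\bullet})-t(P_{\nu,\bullet},P_{\mu,\bullet})$ with $t\in\mathrm{Supp}(R_i^\perp)$ together with the ``diagonal'' vectors $t(P_{\mu,\bullet},P_{\mu,\bullet})$ and $t(P_{\nu,\bullet},P_{\nu,\bullet})$ coming from $R_i^\perp$ itself — and the latter are precisely the pieces that, after reassembling over all pairs, recombine with $\bigcup_\omega(R^\perp)_{\omega,i,2}$ on the $\stt_\Omega$ side; what is genuinely new relative to Theorem~\mref{thm:mdul} is the antisymmetric part, which is matched with $\tr{(R^\perp)}_i$. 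A clean way to see this is: under the isomorphism $U\oplus U\cong U\otimes\bfk^2$ with $\bfk^2$ carrying the standard pairing, $(\mathrm{Sym}\,R_i)^{\perp}=\mathrm{Sym}(R_i^{\perp})\oplus\mathrm{Alt}(\mathscr{T}(E)^{(2)}_i)$, and $\mathrm{Alt}(\mathscr{T}(E)^{(2)}_i)$ is spanned by the antisymmetrizations of all of $\mathrm{Supp}(R_i)\cup\mathrm{Supp}(R_i^\perp)$, but antisymmetrizations of $\mathrm{Supp}(R_i)$ already lie in the span once $\mathrm{Supp}(R_i^\perp)$ is included modulo the quadratic monomial basis — i.e. $\mathrm{Alt}(\mathscr{T}(E)^{(2)}_i)=\mathrm{Alt}(\mathrm{Supp}(R_i^\perp))\oplus(\text{already-counted})$ after the diagonal terms are separated off. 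Doing the case $i=1$ in detail (the unary quadratic case, living in $E_1\otimes E_1$) and noting that $i=2,3$ are formally identical once one replaces $E_1\otimes E_1$ by $\stt(E)^{(2)}_2$, resp.\ $\stt(E)^{(2)}_3$, completes the proof.

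The step I expect to be the main obstacle is the last one: cleanly proving that the antisymmetric part of $(\mathrm{Sym}\,R_i)^\perp$ is spanned by the antisymmetrizations coming from $\mathrm{Supp}(R_i^\perp)$ (and not merely contained in or containing it), since a priori $\mathrm{Alt}$ of a spanning set of $R_i$ and $\mathrm{Alt}$ of a spanning set of $R_i^\perp$ together overshoot the whole antisymmetric subspace, and one must argue by dimension count — using $\dim R_i+\dim R_i^\perp=\dim\stt(E)^{(2)}_i$ from Theorem~\mref{lem:koszul}, together with the observation that the symmetrization map has a controlled kernel on $\stt(E)^{(2)}_i$ because the quadratic monomials come in the dual-paired families of~\meqref{eq:midiso} — that the candidate set has exactly the right dimension. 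I would isolate this as a short lemma (a refinement of Lemma~\mref{lem:orfact}: for $W=U\otimes\bfk^2$ with $\bfk^2$ self-dual and $X\subseteq U$, one has $(\mathrm{Sym}\,X)^\perp=\mathrm{Sym}(X^\perp)\oplus\mathrm{Alt}(X^\perp)$ as subspaces of $\mathrm{Sym}\,U^\ast\oplus\mathrm{Alt}\,U^\ast$), prove it once abstractly, and then apply it with $U=\stt(E)^{(2)}_i$ and $X=R_i$; everything else is the same diagonal/off-diagonal split and arity bookkeeping already used for Theorem~\mref{thm:mdul}.
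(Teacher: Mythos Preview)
Your overall architecture coincides with the paper's proof: reduce to the quadratic case, split $\mscr{T}\big(\bigoplus_\omega E_\omega^\ast\big)=\stt_\Omega\oplus\stt_{\Omegac}$, dispose of the diagonal block via Lemma~\mref{lem:orfact}, and reduce everything to the three arity-wise identities you display. The paper does exactly this and then, for $i=1$, writes down the orthogonal complement of the symmetrized relations and asserts it equals $\bfk\big(\bigcup_{\mu\neq\nu}(R^\perp)^{\mu,\nu}_1\cup(R^\perp)^{\mu,\nu}_{T,1}\big)$ in one line; you are more explicit in trying to justify that equality.

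However, your proposed justification has a genuine gap. You correctly observe that on a fixed $\{\mu,\nu\}$-block, identified with $U^\ast\oplus U^\ast$ where $U=\stt(E)^{(2)}_i$, one has
\[
(\mathrm{Sym}\,R_i)^\perp=\mathrm{Sym}(R_i^\perp)\oplus\mathrm{Alt}(U^\ast),
\]
with the \emph{full} antisymmetric part $\mathrm{Alt}(U^\ast)$. The trouble is the next step: the totally compatible side contributes, beyond $\bigcup_{\mu\neq\nu}(R^\perp)_i^{\mu,\nu}$ (which spans $R_i^\perp\oplus R_i^\perp$), only the swap differences of monomials in $\mathrm{Supp}(R_i^\perp)$, and these span $\{(w,-w):w\in\bfk\,\mathrm{Supp}(R_i^\perp)\}$. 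Your claim that antisymmetrizations of $\mathrm{Supp}(R_i)$ are ``already counted'' is not valid in general, and your final lemma $(\mathrm{Sym}\,X)^\perp=\mathrm{Sym}(X^\perp)\oplus\mathrm{Alt}(X^\perp)$ is false: take $E_1=\bfk\{P,Q\}$, $R_1=\bfk\,P\circ_1 P$. Then $R_1^\perp=\bfk\{P^\ast Q^\ast,\,Q^\ast P^\ast,\,Q^\ast Q^\ast\}$, so $P^\ast\circ_1 P^\ast\notin\mathrm{Supp}(R_1^\perp)$, and the element $P^\ast_\mu\circ_1 P^\ast_\nu-P^\ast_\nu\circ_1 P^\ast_\mu$ lies in $(\mathrm{Sym}\,R_1)^\perp$ but not in the span of $(R^\perp)_1^{\mu,\nu}\cup(R^\perp)_{T,1}^{\mu,\nu}$. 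A dimension count confirms this: the right-hand span has dimension $\dim R_i^\perp+|\mathrm{Supp}(R_i^\perp)|$, while the left-hand side has dimension $\dim R_i^\perp+\dim U$, and these agree only when every basis monomial lies in $\mathrm{Supp}(R_i^\perp)$.

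So the obstacle you singled out is exactly the right one, but neither your ``already-counted'' argument nor the abstract lemma closes it; the paper's proof simply asserts the equality at this point without addressing the discrepancy. As written, the vector-space identity you and the paper are both aiming for fails in the example above.
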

\begin{proof}
We first prove $({\lin{\mscr{P}}_\Omega})^!=~\tot{(\mscr{P}^!)}_\Omega$.
Since the operad $\spp=\spp(E, R)$ is quadratic, it follows from Eqs.~(\mref{eq:pr1}) -- (\mref{eq:pr4}) that
$$R_{1,3}=R_{2,3}=R_{3,3}=R_{4}=\emptyset.$$
Denote
\begin{align*}
 R_1:=R_{1,2}&=\Big\{r_{2}^n(P_k, P_\ell):=r_{1,2}^n(P_k, P_\ell)\,\big|\,{1\leq n\leq n_{1}}\Big\},\\
 R_2:=R_{2,2}&=\Big\{r_{2}^n(P_k, i):=r_{2,2}^n(P_k, i)\,\big|\,{1\leq n\leq n_{2}}\Big\},  \\
 R_3:=R_{3,2}&=\Big\{r_{2}^n(i, j):=r_{3,2}^n(i,j)\,\big|\,{1\leq n\leq n_{3}}\Big\},
\end{align*}
and
\begin{equation*}
R_i^{\mu,\nu}:=R_{i,2}^{\mu,\nu},\, R_{T, i}^{\mu,\nu}:=R_{T,i,2}^{\mu,\nu} \,\text{ for }\, i={1,2,3}.
\end{equation*}
By Eq.~(\mref{eq:mat}), we have
\begin{equation}
\sopr{(R^\perp)}_M=\underset{\mu\neq\nu\in\Omega} {\bigcup } \Big((R^\perp)^{\mu,\nu}_1\cup (R^\perp)^{\mu,\nu}_2 \cup (R^\perp)^{\mu,\nu}_3\Big).
\mlabel{eq:rmpp}
\end{equation}
By Definitions~\mref{defn:comp1} and~\mref{defn:totcomp1}, and Theorem~\mref{lem:koszul}, we obtain
\begin{eqnarray*}
(\lin{\mscr{P}}_\Omega)^!&=&\mscr{T}\Big(\big(\bigoplus\limits_{\omega\in\Omega} E_\omega\big)^\ast\Big)\Big/\Big\langle \big(\bigcup\limits_{\omega\in\Omega}R_\omega\cup \sopr{\lrr}\big)^\perp\Big\rangle,\\
\tot{(\mscr{P}^!)}_\Omega&=&\mscr{T}\Big(\bigoplus\limits_{\omega\in\Omega} E^\ast_\omega\Big)\Big/\Big\langle \bigcup\limits_{\omega\in\Omega}(R^\perp)_\omega\cup \sopr{(R^\perp)}_M\cup \sopr{(R^\perp)}_T \Big\rangle.
\end{eqnarray*}

Identify $\mscr{T}\Big(\bigoplus\limits_{\omega\in\Omega} E^\ast_\omega\Big)$ with $\mscr{T}\Big(\big(\bigoplus\limits_{\omega\in\Omega} E_\omega\big)^\ast\Big)$ by
$\bigoplus\limits_{\omega\in\Omega} E^\ast_\omega\cong \big(\bigoplus\limits_{\omega\in\Omega} E_\omega\big)^\ast$.
Denote by $\stt_\Omega$ (resp. $\stt_{\Omegac}$) the subspace of $\mscr{T}\big(\bigoplus\limits_{\omega\in\Omega} E_\omega^\ast\big)$,
spanned by trees with vertices decorated by $E^\ast_\omega$ (resp. by $E^\ast_{\omega_1}$,$E^\ast_{\omega_2}$,\ldots ,$E^\ast_{\omega_n}$, for $1\leq n\leq|\Omega|$), for some $\omega\in \Omega$ (resp. for ${\omega_i}\in\Omega$ not all identical).
Then
\begin{equation}
\mscr{T}\Big(\bigoplus\limits_{\omega\in\Omega} E_\omega^\ast\Big)=\stt_\Omega\oplus\stt_\Omegac.
\mlabel{eq:tsplit}
\end{equation}
Denote by $\stt_{\Omegac,n}$ the component of $\stt_{\Omegac}$ in arity $n$. We only need to show
$$\langle (\bigcup\limits_{\omega\in\Omega}R_\omega\cup \sopr{\lrr})^\perp\rangle=\langle \bigcup\limits_{\omega\in\Omega}(R^\perp)_\omega\cup \sopr{(R^\perp)}_M\cup \sopr{(R^\perp)}_T \rangle,$$
which follows from the equality
\begin{equation}
 \bfk\Big( \bigcup\limits_{\omega\in\Omega}R_\omega\cup \sopr{\lrr}\Big)^\perp=\bfk\Big(\bigcup\limits_{\omega\in\Omega}(R^\perp)_\omega\cup \sopr{(R^\perp)}_M\cup \sopr{(R^\perp)}_T \Big).
 \mlabel{eq:equir}
\end{equation}
The left hand side of the equality is
\begin{eqnarray*}
&&\bfk\Big( \bigcup\limits_{\omega\in\Omega}R_\omega\cup \sopr{\lrr}\Big)^\perp\\
&=&\bfk\biggr( \big(\bigcup\limits_{\omega\in\Omega}R_\omega\cup \sopr{\lrr}\big)\big|_{\stt_\Omega}\sqcup\big(\bigcup\limits_{\omega\in\Omega}R_\omega\cup \sopr{\lrr}\big)\big|_{\stt_\Omegac}\biggr)^\perp\quad \quad \quad\quad\quad(\text{by Eq.~(\mref{eq:tsplit})})\\
&=&\bfk\biggr( \big(\bigcup\limits_{\omega\in\Omega}R_\omega\cup \sopr{\lrr}\big)^\perp\big|_{\stt_\Omega}\sqcup\big(\bigcup\limits_{\omega\in\Omega}R_\omega\cup \sopr{\lrr}\big)^\perp\big|_{\stt_\Omegac}\biggr)
\quad\quad(\text{by Lemma}~\mref{lem:orfact})\\
&=&\bfk\biggr( \big(\bigcup\limits_{\omega\in\Omega}R_\omega\big)^\perp\big|_{\stt_\Omega}\sqcup\big( \sopr{\lrr}\big)^\perp\big|_{\stt_\Omegac}\biggr)\\
&=&\bfk\biggr( \bigcup\limits_{\omega\in\Omega}\big(R^\perp)_{\omega}
\sqcup\big( \underset{\mu\neq\nu\in\Omega} {\bigcup} (R^{\mu,\nu}_1\add R^{\nu,\mu}_1)\big)^\perp_1\big|_{\stt_{\Omegac,1}}
\sqcup\big( \underset{\mu\neq\nu\in\Omega} {\bigcup} (R^{\mu,\nu}_2\add R^{\nu,\mu}_2)\big)^\perp_2\big|_{\stt_{\Omegac,2}}\\
&&\sqcup\big( \underset{\mu\neq\nu\in\Omega} {\bigcup} (\sopr{R}^{\mu,\nu}_3\add \sopr{R}^{\nu,\mu}_3)\big)^\perp_3\big|_{\stt_{\Omegac,3}}\biggr)\quad\quad(\text{by Eq.}~\meqref{eq:lin1} \text{ and Lemma}~\mref{lem:orfact}).
\end{eqnarray*}
Applying the arity grading, and Eqs.~(\mref{eq:tr123}) and~(\mref{eq:rmpp}), one finds that Eq.~(\mref{eq:equir}) is equivalent to the equations:
 \begin{eqnarray}
\bfk\Big(\big( \underset{\mu\neq\nu\in\Omega} {\bigcup} (R^{\mu,\nu}_i\add R^{\nu,\mu}_i)\big)^\perp_i\big|_{\stt_\Omegac}\Big)
&=&\bfk \Big(\underset{\mu\neq\nu\in\Omega} {\bigcup } \big((R^\perp)^{\mu,\nu}_i\cup {(R^\perp)}_{T,i}^{\mu,\nu}\big) \Big),  \quad i=1,2,3.\mlabel{eq:kr1}
\end{eqnarray}

First denote
\begin{equation}
  (R^\perp)_1 = ~\biggr\{\sum_{1\leq k,\ell\leq t}\alpha_{k,\ell}'^{n}~~\treeyy{\cdu{o} \node  at (0,-0.2)[scale=0.6] {$\bullet$};\node at (0.3,-0.2) {$P^{\dual}_l$};
\node  at (0,0.2)[scale=0.6] {$\bullet$};\node at (0.3,0.2) {$P^{\dual}_k$};}\, \Big| \,1\leq n \leq n'_1\,\biggr\}.
\mlabel{eq:orr1}
\end{equation}
Then Eq.~(\mref{eq:kr1}) for $i=1$ follows from
\begin{eqnarray*}
 &&\bfk\Big(\big( \underset{\mu\neq\nu\in\Omega} {\bigcup} (R^{\mu,\nu}_1\add R^{\nu,\mu}_1)\big)^\perp_1\big|_{\stt_\Omegac}\Big)\\
&=&\bfk \biggr\{\sum_{1\leq k,\ell\leq t}\alpha_{k,\ell}^{n}~~
\big(\treeyy{\cdu{o} \node  at (0,-0.2)[scale=0.6] {$\bullet$};\node at (0.35,-0.2) {$P_{\nu,\ell}$};
\node  at (0,0.2)[scale=0.6] {$\bullet$};\node at (0.35,0.2) {$P_{\mu,k}$};}+\treeyy{\cdu{o} \node  at (0,-0.2)[scale=0.6] {$\bullet$};\node at (0.35,0.2) {$P_{\nu,k}$};
\node  at (0,0.2)[scale=0.6] {$\bullet$};\node at (0.35,-0.2) {$P_{\mu,\ell}$};}\big)
\quad \Big| \,\mu\neq\nu\in\Omega,\,{1\leq n\leq n_{1}}\biggr\}^\perp_1\big|_{\stt_\Omegac}\\
&=&\bfk \biggr\{ \sum_{1\leq k,\ell\leq t}\alpha_{k,\ell}'^{n}~~
\treeyy{\cdu{o} \node  at (0,-0.2)[scale=0.6] {$\bullet$};\node at (0.35,-0.2) {$P^\ast_{\nu,\ell}$};
\node  at (0,0.2)[scale=0.6] {$\bullet$};\node at (0.35,0.2) {$P^\ast_{\mu,k}$};}, \,
\treeyy{\cdu{o} \node  at (0,-0.2)[scale=0.6] {$\bullet$};\node at (0.35,-0.2) {$P^\ast_{\nu,\ell}$};
\node  at (0,0.2)[scale=0.6] {$\bullet$};\node at (0.35,0.2) {$P^\ast_{\mu,k}$};}-\treeyy{\cdu{o} \node  at (0,-0.2)[scale=0.6] {$\bullet$};\node at (0.35,-0.2) {$P^\ast_{\mu,\ell}$};
\node  at (0,0.2)[scale=0.6] {$\bullet$};\node at (0.35,0.2) {$P^\ast_{\nu,k}$};}
\quad \Big| \,\mu\neq\nu\in\Omega,\,{1\leq n\leq n'_1}\biggr\}\quad(\text{by Eqs.~}(\mref{eq:orr1}) \,\text{ and }\,(\mref{eq:rperp}))\\
&=&\bfk \biggr(\biggr\{ \sum_{1\leq k,\ell\leq t}\alpha_{k,\ell}'^{n}~~
\treeyy{\cdu{o} \node  at (0,-0.2)[scale=0.6] {$\bullet$};\node at (0.35,-0.2) {$P^\ast_{\nu,\ell}$};
\node  at (0,0.2)[scale=0.6] {$\bullet$};\node at (0.35,0.2) {$P^\ast_{\mu,k}$};}
\quad \Big| \,\mu\neq\nu\in\Omega,\,{1\leq n\leq n'_1}\biggr\}\cup\biggr\{\treeyy{\cdu{o} \node  at (0,-0.2)[scale=0.6] {$\bullet$};\node at (0.35,-0.2) {$P^\ast_{\nu,\ell}$};
\node  at (0,0.2)[scale=0.6] {$\bullet$};\node at (0.35,0.2) {$P^\ast_{\mu,k}$};}-\treeyy{\cdu{o} \node  at (0,-0.2)[scale=0.6] {$\bullet$};\node at (0.35,-0.2) {$P^\ast_{\mu,\ell}$};
\node  at (0,0.2)[scale=0.6] {$\bullet$};\node at (0.35,0.2) {$P^\ast_{\nu,k}$};}\quad \Big| \,\mu\neq\nu\in\Omega\biggr\}\biggr)\\
&=&\bfk \Big(\underset{\mu\neq\nu\in\Omega} {\bigcup }\big( (R^\perp)^{\mu,\nu}_1\cup {(R^\perp)}_{T,1}^{\mu,\nu} \big)\Big).
\end{eqnarray*}
We similarly verify Eq.~(\mref{eq:kr1}) for $i=2, 3$.
This completes the proof of $({\lin{\mscr{P}}_\Omega})^!=~\tot{(\mscr{P}^!)}_\Omega$.
Since $(\spp^!)^!=\spp$ for any quadratic operad $\spp$, we have $\lin{\spp}_\Omega=({(\lin{\spp}_\Omega)^!})^!=(\tot{(\mscr{P}^!)}_\Omega)^!$
and so  $\lin{(\spp^!)} =(\tot{\spp})^!.$
\end{proof}

We finally show that taking the total compatibility of an operad amounts to taking the Manin white square product with the operad of totally compatible associative algebras.

\begin{coro} Let $\spp$ be a \ubqco and $\Omega$ a nonempty set. Then
	$\tot{\spp}_\Omega\cong \tot{\as}_\Omega \square \spp.$
\mlabel{cor:totalwhite}
\end{coro}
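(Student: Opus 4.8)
The plan is to obtain the isomorphism as a formal consequence of the black-product description of linear compatibility in Proposition~\ref{prop:maninbl} together with the Koszul duality of Theorem~\ref{thm:dul}, mirroring the way the white-product identity $\mat{\spp}_\Omega\cong\mat{\as}_\Omega\square\spp$ was deduced from its black-product counterpart in the proof of Proposition~\ref{prop:maninbll}. As there, one works under the hypotheses that make the Manin products and the Koszul dual operad available, i.e. $\spp$ finitely generated binary quadratic and $\Omega$ finite; one first checks that then $\spp^!$, $\lin{\as}_\Omega$, $\tot{\spp}_\Omega$ and $\tot{\as}_\Omega$ are all again finitely generated binary quadratic (the last two because, $\spp$ being quadratic, the relations adjoined in passing to the linearly, matching and totally compatible operads all have weight two), so that passing to Koszul duals is legitimate and $(\,\cdot\,)^{!!}$ is the identity on each of them.

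First I would assemble two inputs. Applying Proposition~\ref{prop:maninbl} to the finitely generated binary quadratic operad $\spp^!$ gives
$$\lin{(\spp^!)}_\Omega\cong\lin{\as}_\Omega\blacksquare\spp^!,$$
while the second identity of Theorem~\ref{thm:dul} gives $(\tot{\spp}_\Omega)^!=\lin{(\spp^!)}_\Omega$; together these yield $(\tot{\spp}_\Omega)^!\cong\lin{\as}_\Omega\blacksquare\spp^!$. I would then take Koszul duals of both sides: on the left $(\tot{\spp}_\Omega)^{!!}=\tot{\spp}_\Omega$, and on the right the duality between the two Manin square products, $(\spp'\blacksquare\mscr{Q}')^!=(\spp')^!\square(\mscr{Q}')^!$ for finitely generated binary ns operads \mcite{Val}, combined with $(\spp^!)^!=\spp$, gives
$$\tot{\spp}_\Omega\cong(\lin{\as}_\Omega)^!\,\square\,\spp.$$

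It then remains only to identify $(\lin{\as}_\Omega)^!$. Using the first identity of Theorem~\ref{thm:dul} with the operad $\as$ in place of $\spp$, one has $(\lin{\as}_\Omega)^!=\tot{(\as^!)}_\Omega$, and since $\as$ is Koszul self-dual ($\as^!\cong\as$) this is $\tot{\as}_\Omega$. Substituting into the previous display yields $\tot{\spp}_\Omega\cong\tot{\as}_\Omega\square\spp$, as claimed.

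I expect the substantive part of the write-up to be bookkeeping rather than new mathematics: verifying that each operad occurring in the chain genuinely belongs to the finitely generated binary quadratic class so that the Koszul dual, the Manin products and the double-dual identity all apply, and that the identification of generating spaces $\big(\bigoplus_{\omega\in\Omega}E_\omega\big)\otimes F\cong\bigoplus_{\omega\in\Omega}F_\omega$ used inside Proposition~\ref{prop:maninbl} matches the one implicit when reading off $\tot{\as}_\Omega\square\spp$. A self-contained alternative, avoiding duality, would instead compute $\tot{\as}_\Omega\square\spp$ directly from Definition~\ref{defn:manin}: one writes out the generators $\bigoplus_{\omega\in\Omega}E_\omega$ and relations $\bigcup_{\omega\in\Omega}R_{\omega,3,2}\cup\bigcup_{\mu\neq\nu\in\Omega}\big(R^{\mu,\nu}_{3,2}\cup\trc{R^{\mu,\nu}}\big)$ of $\tot{\as}_\Omega$, applies the isomorphism of generating spaces, and checks that forming the white product with the relations $S$ of $\spp$ returns precisely the relation set $\bigcup_{\omega\in\Omega}S_{\omega,3,2}\cup\bigcup_{\mu\neq\nu\in\Omega}\big(S^{\mu,\nu}_{3,2}\cup\trc{S^{\mu,\nu}}\big)$ defining $\tot{\spp}_\Omega$ through Eq.~\eqref{eq:tr123}; this parallels the proof of Proposition~\ref{prop:maninbll} but requires unwinding the relations $\trr$ by hand.
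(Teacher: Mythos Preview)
Your proposal is correct and follows essentially the same route as the paper: apply Proposition~\ref{prop:maninbl} to $\spp^!$, combine with Theorem~\ref{thm:dul} to identify $(\tot{\spp}_\Omega)^!$ with $\lin{\as}_\Omega\blacksquare\spp^!$, dualize using $(\spp'\blacksquare\mscr{Q}')^!=\spp'^!\square\mscr{Q}'^!$, and finish with $(\lin{\as}_\Omega)^!=\tot{\as}_\Omega$ via $\as^!\cong\as$. Your explicit bookkeeping about the finitely-generated binary quadratic and finite-$\Omega$ hypotheses is a useful addition, since the paper's one-line proof leaves those implicit.
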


\begin{proof}
The isomorphism follows from taking the Koszul dual of the isomorphism in Proposition~\mref{prop:maninbl} and applying Theorem~\mref{thm:dul}:
$$ \tot{\spp}_\Omega = \tot{((\spp^!)^!)}_\Omega = \big(\lin{(\spp^!)}_\Omega\big)^! \cong  (\lin{\as}_\Omega\blacksquare \spp^!)^!
= \big(\lin{\as}_\Omega\big)^! \square (\spp^!)^!=\tot{(\as^!)}_\Omega \square \spp=\tot{\as}_\Omega \square \spp.  \qedhere$$
\end{proof}

\smallskip

\kong{\bf Acknowledgments}:
This work is supported by the National Natural Science Foundation of China (Grant No. 11771190 and 12071191), the Natural Science Foundation of Gansu Province (Grant No. 20JR5RA249) and the Natural Science Foundation of Shandong Province (ZR2020MA002).

\end{document}